\documentclass[12pt, reqno]{amsart}
\usepackage{amsmath, amsthm, amscd, amsfonts, amssymb, graphicx, color}
\usepackage[bookmarksnumbered, colorlinks, plainpages]{hyperref}
\hypersetup{colorlinks=true,linkcolor=red, anchorcolor=green, citecolor=cyan, urlcolor=red, filecolor=magenta, pdftoolbar=true}





\textheight 22cm \textwidth 15cm \voffset=-0.5cm
\oddsidemargin=0.5cm \evensidemargin=0.5cm
\topmargin=-0.5cm


\DeclareMathSymbol{\subsetneqq}{\mathbin}{AMSb}{36}


\newcommand{\R}{\mathbb{R}}

\newcommand{\C}{\mathbb{C}}




\newcommand{\beq}{\begin{eqnarray}}
\newcommand{\eeq}{\end{eqnarray}}
\newcommand{\bq}{\begin{equation}}
\newcommand{\eq}{\end{equation}}
\newcommand{\beqn}{\begin{eqnarray*}}
\newcommand{\eeqn}{\end{eqnarray*}}
\newcommand{\bex}{\begin{exo}}
\newcommand{\eex}{\end{exo}}
\newcommand{\ben}{\begin{enumerate}}
\newcommand{\een}{\end{enumerate}}


\newtheorem{th1}{{\bf Theorem}}[section]
\newtheorem{thm}[th1]{{\bf Theorem}}
\newtheorem{lem}[th1]{{\bf Lemma}}
\newtheorem{prop}[th1]{{\bf Proposition}}
\newtheorem{cor}[th1]{{\bf Corollary}}
\newtheorem{rem}[th1]{\bf Remark}
\newtheorem{rems}[th1]{\bf Remarks}
\newtheorem{defi}[th1]{\bf Definition}
\newtheorem{defs}[th1]{\bf Definitions}


\author[T. Saanouni]{Tarek Saanouni}
\address{Department of Mathematics, College of Sciences and Arts of Uglat Asugour, Qassim University, Buraydah, Kingdom of Saudi Arabia.}
\email{\sl t.saanouni@qu.edu.sa}
\email{\sl Tarek.saanouni@ipeiem.rnu.tn}

\subjclass[2010]{35Q55}
\keywords{Fourth-order Schr\"odinger equation, inhomogeneous, scattering.}

\title[IBNLS]{Energy scattering for radial focusing inhomogeneous bi-harmonic Schr\"odinger equations}

\date{\today}
\begin{document}
\begin{abstract}
This note studies the asymptotic behavior of global solutions to the fourth-order Schr\"odinger equation
$$i\dot u+\Delta^2 u+F(x,u)=0 .$$
Indeed, for both cases, local and non-local source term, the scattering is obtained in the focusing mass super-critical and energy sub-critical regimes, with radial setting. This work uses a new approach due to Dodson-Murphy \cite{dm}.
\end{abstract}
\maketitle
\vspace{ 1\baselineskip}
\renewcommand{\theequation}{\thesection.\arabic{equation}}
\section{Introduction}
This manuscript is concerned with the energy scattering theory of the Cauchy problem for the following inhomogeneous focusing Shr\"odinger equation 
\begin{equation}
\left\{
\begin{array}{ll}
i\dot u+\Delta^2  u- |x|^{2b}|u|^{2(q-1)}u=0 ;\\
u(0,.)=u_0,
\label{S2}
\end{array}
\right.
\end{equation}
and the inhomogeneous focusing Choquard equation
\begin{equation}
\left\{
\begin{array}{ll}
i\dot u+\Delta^2  u-(I_\alpha*|\cdot|^b|u|^p)|x|^{b}|u|^{p-2}u=0 ;\\
u(0,.)=u_0.
\label{S}
\end{array}
\right.
\end{equation}

Here and hereafter $u: \R\times\R^N \to \C$, for some natural integer $N$. The unbounded inhomogeneous term is $|\cdot|^b$, for some $b<0$. The source terms satisfy $q>1$ and $p\geq2$. The Riesz-potential is defined on $\R^N$ by 
$$I_\alpha:x\to\frac{\Gamma(\frac{N-\alpha}2)}{\Gamma(\frac\alpha2)\pi^\frac{N}22^\alpha|x|^{N-\alpha}},\quad  0<\alpha<N.$$ 

The homogeneous case corresponding to $b = 0$ was considered first in \cite{Karpman,Karpman 1} to take into account the role of small fourth-order dispersion terms in the propagation of intense laser beams in a bulk medium with a Kerr non-linearity.\\


The equation \eqref{S2} enjoys the scaling invariance
$$u_\lambda=\lambda^\frac{2+b}{q-1}u(\lambda^4.,\lambda .),\quad\lambda>0.$$
The homogeneous Sobolev norm invariant under the previous scaling is $\|\cdot\|_{\dot H^{s_c}}$, where  
$$s_c:=\frac N2-\frac{2+b}{q-1}.$$
Similarly, the equation \eqref{S} satisfies the scaling invariance
$$u_\lambda=\lambda^\frac{4+2b+\alpha}{2(p-1)}u(\lambda^4.,\lambda .),\quad\lambda>0.$$
This gives the critical Sobolev index
$$s'_c:=\frac N2-\frac{4+2b+\alpha}{2(p-1)}.$$
The above problems have the same Sobolev critical index in the limit case $\alpha=0$. In this note, one focus on the mass super-critical and energy sub-critical regimes $0<s_c,s_c'<1$.\\

To the author knowledge, in the literature, there exist few works dealing with the above inhomogeneous fourth-order Schr\"odinger equations. Indeed, for a local source term, in the mass-critical case, the existence of non-global solutions with negative energy was investigated in \cite{cow}. Moreover, the local well-posedness in the energy space of the problem \eqref{S2} was established recently in \cite{gp}. Some well-posedness issues of the inhomogeneous bi-harmonic Choquard problem \eqref{S} were treated by the author in a submitted paper \cite{st2}.\\  

The scattering of the global focusing solutions under the ground state threshold exist for the limiting case $b=0$ in \eqref{S2}, was proved \cite{guo} using the concentration compactness method due to \cite{km}. This result was revisited recently in \cite{vdd}. Note that there exist many works dealing with the biharmonic Schr\"odinger problem with a local source term. On the contrary, there is a few literature which treats the Hartree equation of the fourth order. In fact, some local and global existence results in $H^s$ for the the fourth-order non-linear Schr\"odinger-Hartree equation with variable dispersion coefficients were obtained in \cite{cb}. In addition, in the mass-super-critical and energy-sub-critical regimes, a sharp threshold of global well-psedness and scattering of energy solutions versus finite time blow-up dichotomy was given \cite{st0}. For the stationary case, see \cite{cd}.\\    
 
It is the aim of this note, to investigate the asymptotic behavior of global solutions to both inhomogeneous fourth-order Schr\"odinger and Choquard equations. Indeed, by use of Morawetz estimates and a scattering criterion in the spirit of \cite{tao}, one obtains the scattering in the energy space under the ground state threshold. This work follows the ideas of \cite{Holmer} for the NLS equation.\\ 

 The rest of this paper is organized as follows. The next section contains the main results and some useful estimates. Section three is devoted to prove the scattering of global solutions to the inhomogeneous fourth-order Schr\"odinger equation \eqref{S2}. The last section is consecrated to establish the scattering of global solutions to the inhomogeneous fourth-order Choquard equation \eqref{S}.\\

Here and hereafter, $C$ denotes a constant which may vary from line to another. Denote the Lebesgue space $L^r:=L^r({\R^N})$ with the usual norm $\|\cdot\|_r:=\|\cdot\|_{L^r}$ and $\|\cdot\|:=\|\cdot\|_2$. The inhomogeneous Sobolev space $H^2:=H^2({\R^N})$ is endowed with the norm 
$$ \|\cdot\|_{H^2} := \Big(\|\cdot\|^2 + \|\Delta\cdot\|^2\Big)^\frac12.$$
Let us denote also $C_T(X):=C([0,T],X)$ and $X_{rd}$ the set of radial elements in $X$. Moreover, for an eventual solution to \eqref{S2} or \eqref{S}, $T^*>0$ denotes it's lifespan. Finally, $x^\pm$ are two real numbers near to $x$ satisfying $x^+>x$ and $x^-<x$.

\section{Background and main results}
This section contains the contribution of this paper and some standard estimates needed in the sequel.
\subsection{Preliminary}
Take for $R>0$, $\psi_R:=\psi(\frac\cdot R)$, where $0\leq\psi\leq1$ is a radial smooth function satisfying
$$\psi\in C_0^\infty(\R^N),\quad supp(\psi)\subset \{|x|<1\}, \quad\psi=1\,\,\mbox{on}\,\,\{|x|<\frac12\}.$$
The mass-critical and energy-critical exponents for the Schr\"odinger problem \eqref{S2} are
$$q_*:=1+\frac{4+2b}N\quad\mbox{and}\quad q^*:=\left\{
\begin{array}{ll}
1+\frac{4+2b}{N-4},\quad\mbox{if}\quad N\geq5;\\
\infty,\quad\mbox{if}\quad  1\leq N\leq4.
\end{array}
\right.$$
The mass-critical and energy-critical exponents for the Choquard problem \eqref{S} are
$$p_*:=1+\frac{\alpha+4+2b}N\quad\mbox{and}\quad p^*:=\left\{
\begin{array}{ll}
1+\frac{4+2b+\alpha}{N-4},\quad\mbox{if}\quad N\geq5;\\
\infty,\quad\mbox{if}\quad  1\leq N\leq4.
\end{array}
\right.$$
Here and hereafter, define the real numbers
\begin{gather*}
B:=\frac{Np-N-\alpha-2b}2,\quad A:=2p-B;\\
D:=\frac{Nq-N-2b}2,\quad E:=2q-D.
\end{gather*}
For $u\in H^2$, take the action and the constraint of the Shcr\"odinger problem \eqref{S},
\begin{gather*}
\mathcal S[u]:=M[u]+\mathcal E[u]=\|u\|_{H^2}^2-\frac1q\int_{\R^N}|x|^{2b}|u|^{2q}\,dx;\\
\mathcal K[u]:=\|\Delta u\|^2-\frac D{2q}\int_{\R^N}|x|^{2b}|u|^{2q}\,dx.
\end{gather*}
Similarly, the action and the constraint of the Choquard problem \eqref{S2}, read
\begin{gather*}
S[u]:=M[u]+E[u]=\|u\|_{H^2}^2-\frac1p\int_{\R^N}(I_\alpha*|\cdot|^b|u|^p)|x|^b|u|^p\,dx;\\
K[u]:=\|\Delta u\|^2-\frac B{2p}\int_{\R^N}(I_\alpha*|\cdot|^b|u|^p)|x|^b|u|^p\,dx.
\end{gather*}
\begin{defs}
Let us recall that
\begin{enumerate}
\item[1.]
a ground state of \eqref{S2} is a solution to 
\begin{equation}\label{grnd2}
Q+\Delta^2Q-|x|^{2b}|Q|^{2(q-1)}Q=0,\quad0\neq Q\in H^2,
\end{equation}
which minimizes the problem
$$\inf_{0\neq v\in H^2}\Big\{\mathcal S[v] \quad\mbox{s\,. t}\quad \mathcal K[v]=0\Big\}.$$
\item[2.]
a ground state of \eqref{S} is a solution to 
\begin{equation}\label{grnd}
\phi+\Delta^2\phi-(I_\alpha*|\cdot|^b|\phi|^p)|x|^b|\phi|^{p-2}\phi=0,\quad0\neq\phi\in H^2,
\end{equation}
which minimizes the problem
$$\inf_{0\neq u\in H^2}\Big\{S[u] \quad\mbox{s\,. t}\quad K[u]=0\Big\}.$$
\end{enumerate}
\end{defs}
Let $\phi$ is a ground state and $u$ a solution to \eqref{S2} and $Q$ a ground state and $v$ a solution to \eqref{S}. The following scale invariant quantities describe the dichotomy of global/non-global existence of solutions \cite{guo}.
\begin{gather*}
\mathcal{ME}[u]:=\frac{E[u]^{s_c}M[u]^{2-s_c}}{E[\phi]^{s_c}M[\phi]^{2-s_c}},\quad \mathcal M\mathcal G[u]:=\frac{\|\Delta u\|^{s_c}\|u\|^{2-s_c}}{\|\Delta\phi\|^{s_c}\|\phi\|^{2-s_c}};\\
\mathcal{ME}'[v]:=\frac{\mathcal E[v]^{s_c}M[v]^{2-s_c}}{\mathcal E[Q]^{s_c}M[Q]^{2-s_c}},\quad\mathcal M\mathcal G'[v]:=\frac{\|\Delta v\|^{s_c}\|v\|^{2-s_c}}{\|\Delta Q\|^{s_c}\|Q\|^{2-s_c}}.
\end{gather*}
Let us recall some local well-posedness results \cite{st2,gp} about the above inhomogeneous fourth-order Schr\"odinger problems.
\begin{prop}
Let $N\geq3$, $\max\{-4,-\frac N2\}<2b<0$, $\max\{1,1+\frac{1+2b}N\}<q<q^*$ and $u_0\in H^2$. Then, there exists $T^* = T^*(\|u_0\|_{H^2})$ such that \eqref{S2} admits a unique maximal solution
$$u\in C_{T^*}(H^2).$$ 
Moreover,
\begin{enumerate}
\item[1.]  the solution satisfies the conservation laws
\begin{gather*}
Mass:=M[u(t)] :=\int_{\R^N}|u(t,x)|^2dx = M[u_0];\\
Energy:=\mathcal E[u(t)] :=\|\Delta u(t)\|^2+\frac1q\int_{\R^N}|x|^{2b}|u(t,x)|^{2q}\,dx=\mathcal E[u_0]. 
\end{gather*}
\item[2.] $u\in L^q_{loc}((0,T^*),W^{2,r})$ for any admissible pair $(q,r)$ in the meaning of definition \ref{adm}.
\end{enumerate}
\end{prop}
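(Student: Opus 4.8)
The plan is to realize the maximal solution as a fixed point of the Duhamel map and then to promote the formally conserved quantities by a regularization argument. First I would recast \eqref{S2} in its integral form
$$u(t)=e^{it\Delta^2}u_0-i\int_0^t e^{i(t-s)\Delta^2}\big(|x|^{2b}|u|^{2(q-1)}u\big)(s)\,ds,$$
and set up the analysis in a complete metric space of the shape
$$X_{T,M}=\Big\{u\in C_T(H^2)\cap\textstyle\bigcap_{(\mu,\rho)}L^\mu_T W^{2,\rho}:\ \|u\|_{X_T}\leq M\Big\},$$
the intersection running over the admissible pairs of Definition \ref{adm}, equipped with a weaker metric (say the $L^\mu_T L^\rho$ distance) to close the argument. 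The linear part is controlled by the homogeneous and inhomogeneous Strichartz estimates for the bi-harmonic group $e^{it\Delta^2}$, which is exactly what Definition \ref{adm} is tailored to.

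The heart of the matter is the nonlinear estimate: bounding $|x|^{2b}|u|^{2(q-1)}u$, together with its $\Delta$-derivative, in a dual Strichartz space uniformly over $X_{T,M}$. Since $b<0$ the weight $|x|^{2b}$ is singular at the origin, so I would split $\R^N=\{|x|<1\}\cup\{|x|\geq1\}$ and treat the two regions with different H\"older triples. On the inner region the hypothesis $\max\{-4,-\tfrac N2\}<2b<0$ guarantees that $|x|^{2b}$ sits in a Lebesgue class compatible with the Sobolev embeddings $H^2\hookrightarrow L^\rho$; on the outer region $|x|^{2b}$ is bounded and the estimate reduces to the homogeneous bi-harmonic case. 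The subcriticality window $\max\{1,1+\tfrac{1+2b}N\}<q<q^*$ is precisely what renders every Lebesgue exponent produced this way admissible while leaving a positive power of $T$ in front of the nonlinear term. I expect this weighted estimate — in particular controlling the second derivatives of the weighted nonlinearity near $x=0$ — to be the main technical obstacle.

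With these estimates in hand, for $T$ small depending only on $\|u_0\|_{H^2}$ the Duhamel map is a contraction on $X_{T,M}$ with $M\sim\|u_0\|_{H^2}$; the Banach fixed point theorem yields a unique solution $u\in C_T(H^2)$, and membership in every $L^\mu_T W^{2,\rho}$ (claim 2) is automatic from the fixed-point space. A standard continuation/blow-up alternative extends $u$ to a maximal interval $[0,T^*)$, and because the local existence time depends only on the $H^2$ norm one obtains $T^*=T^*(\|u_0\|_{H^2})$.

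Finally, for the conservation laws I would first establish them for regularized data and a mollified weight, where pairing the equation with $\bar u$ and taking imaginary parts yields conservation of mass, and pairing with $\partial_t\bar u$ and taking real parts yields conservation of energy. Passing to the limit using the continuous dependence furnished by the contraction then transfers both identities to the $H^2$ solution.
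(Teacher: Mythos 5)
Your proposal is sound, but note that the paper itself contains no proof of this proposition: it is explicitly \emph{recalled} as a known local well-posedness result, with the argument deferred to the cited references \cite{gp} and \cite{st2}. The scheme you outline --- Duhamel formula, contraction in $C_T(H^2)\cap\bigcap_{(\mu,\rho)}L^\mu_T W^{2,\rho}$ under a weaker Lebesgue metric, Strichartz estimates for $e^{it\Delta^2}$, and a splitting of $\R^N$ into $\{|x|<1\}$ and $\{|x|\geq1\}$ to handle the singular weight --- is exactly the method of those references, so in substance you have reproduced the intended proof rather than diverged from it. The step you flag as the main obstacle is indeed where the stated hypotheses are consumed: applying $\Delta$ to $|x|^{2b}|u|^{2(q-1)}u$ produces the more singular weights $|x|^{2b-1}$ and $|x|^{2b-2}$ near the origin, and the conditions $\max\{-4,-\frac N2\}<2b<0$ and $q>\max\{1,1+\frac{1+2b}N\}$ are precisely what make the resulting H\"older/Sobolev exponents admissible there; a complete write-up must carry this out, together with the regularization argument for the conservation laws, but your outline contains no wrong turn.
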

\begin{prop}
Let $N\geq3$, $0<\alpha<N$ and $\max\{-(N+\alpha),-4(1+\frac\alpha N),N-8-\alpha\}<2b<0$. 
Assume that $N\geq5$ or $3\leq N\leq 4$ and $2\alpha+4b+N>0$. If $u_0\in H^2$ and $2\leq p< p^*$. Then, there exists $T^* = T^*(\|u_0\|_{H^2})$ such that \eqref{S} admits a unique maximal solution
$$ u\in C_{T^*}(H^2).$$
Moreover,
\begin{enumerate}
\item[1.] 
the solutions satisfies the conservation of the mass and the following energy
$$E[u(t)] :=\|\Delta u(t)\|^2+\frac1p\int_{\R^N}|x|^b(I_\alpha *|\cdot|^b|u(t)|^p)|u(t,x)|^p\,dx= E[u_0].$$
\item[2.] $u\in L^q_{loc}((0,T^*),W^{2,r})$ for any admissible pair $(q,r)$.
\end{enumerate}
\end{prop}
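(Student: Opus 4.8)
The plan is to recast \eqref{S} in its Duhamel form and run a Banach fixed point argument in a Strichartz space. Setting $N[u]:=(I_\alpha*|\cdot|^b|u|^p)|x|^b|u|^{p-2}u$, a solution is a fixed point of
$$\Phi(u)(t):=e^{it\Delta^2}u_0-i\int_0^te^{i(t-s)\Delta^2}N[u(s)]\,ds.$$
First I would fix a small $T>0$ and a radius $R\simeq\|u_0\|_{H^2}$, and work on the closed ball of radius $R$ in the resolution space $X_T:=C_T(H^2)\cap\bigcap_{(q,r)}L^q((0,T),W^{2,r})$, the intersection taken over admissible pairs in the sense of Definition \ref{adm}. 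The homogeneous term $e^{it\Delta^2}u_0$ is controlled in $X_T$ by the Strichartz estimates for the fourth-order group, so the whole problem reduces to bounding the inhomogeneous Duhamel term, i.e. to estimating $N[u]$ in a suitable dual Strichartz norm.

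The heart of the matter is the nonlinear estimate, and this is where I expect the main difficulty. I would control the Riesz convolution $I_\alpha*(|\cdot|^b|u|^p)$ through the Hardy--Littlewood--Sobolev inequality, and the singular weights $|x|^b$ (with $b<0$) through Hardy--Sobolev inequalities, after splitting each spatial integral into the unit ball $\{|x|<1\}$ and its complement. On the unit ball the weight is singular and the Hardy--Sobolev embeddings built on $H^2$ provide the missing integrability; on the complement the weight is bounded and only decay is at stake. The precise ranges $\max\{-(N+\alpha),-4(1+\frac{\alpha}{N}),N-8-\alpha\}<2b<0$, the low-dimensional constraint $2\alpha+4b+N>0$, and $2\le p<p^*$ are exactly the hypotheses that keep every Lebesgue exponent produced by these two inequalities inside the admissible window. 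Carrying this out yields bounds of the form $\|N[u]\|_{\mathrm{dual}}\lesssim T^\theta R^{2p-1}$ and $\|N[u]-N[v]\|_{\mathrm{dual}}\lesssim T^\theta R^{2p-2}\|u-v\|_{X_T}$ for some $\theta>0$, so that for $T$ small $\Phi$ is a contraction of the ball into itself. This produces a unique local solution, which is then continued to a maximal one $u\in C_{T^*}(H^2)$ with $T^*=T^*(\|u_0\|_{H^2})$.

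The remaining assertions are routine. Item 2 is read off directly: the fixed point already lies in each $L^q((0,T),W^{2,r})$ by construction, and the Strichartz inequalities applied to $\Phi(u)=u$ propagate this to every admissible pair on compact time subintervals. For the conservation laws I would first establish them for smooth, fast-decaying data, where differentiation under the integral is licit, and then pass to the limit using the continuity $u\in C_{T^*}(H^2)$ together with continuous dependence on the data. Mass conservation follows by pairing the equation with $\bar u$ and taking the imaginary part, the two arising integrals being real; energy conservation follows by differentiating $E[u(t)]$ in time and substituting the equation, where the nonlinear contributions cancel thanks to the self-adjointness of $\Delta^2$ and the symmetry $I_\alpha(x-y)=I_\alpha(y-x)$ of the Riesz potential. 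The genuinely delicate step throughout is the nonlinear estimate of the second paragraph: simultaneously taming the convolution kernel and the origin singularity of $|x|^b$, and verifying that all exponents remain admissible, is precisely where the arithmetic hypotheses on $b,p,\alpha,N$ are consumed.
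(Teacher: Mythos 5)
You should know at the outset that the paper never proves this proposition: it is stated as a recalled result, with the proof deferred to \cite{st2,gp} (the author's submitted paper on the inhomogeneous fourth-order equation and Guzman--Pastor). So there is no internal argument to measure yours against; the fair comparison is with the standard proof in those references. On that score your outline is the expected one and is essentially correct in structure: Duhamel formulation, contraction on a ball of $C_T(H^2)\cap\bigcap_{(q,r)}L^q((0,T),W^{2,r})$, Strichartz estimates for $e^{it\Delta^2}$ (Proposition \ref{prop2}), Hardy--Littlewood--Sobolev (Lemma \ref{Hardy-Littlewwod-Sobolev} and Corollary \ref{lhs2}) for the Riesz convolution, splitting of the singular weight $|x|^b$ into $\{|x|<1\}$ and its complement, and a regularization-plus-limit argument for the conservation laws. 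This is precisely the scheme of the cited works.

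Two places where your sketch glosses over the real effort. First, the resolution space contains two derivatives, so the nonlinear estimate is not only a Lebesgue-exponent bookkeeping for $N[u]$ itself: you must estimate $\Delta N[u]$ (or $\nabla N[u]$ in the dual norm $L^2_t\dot W^{1,\frac{2N}{N+2}}$ of Proposition \ref{prop2}, item 2), and differentiating $(I_\alpha*|\cdot|^b|u|^p)|x|^b|u|^{p-2}u$ produces the factors $|u|^{p-2}$ and the extra negative power $|x|^{b-1}$. This is exactly where the hypothesis $p\geq2$ and the technical restriction $N-8-\alpha<2b$ are consumed --- the paper's own remark after the proposition says as much (``the regularity condition $p\geq2$ gives the restriction $N-8-\alpha<2b$''); attributing all the arithmetic hypotheses to HLS/Hardy admissibility for $N[u]$ alone misses this. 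Second, because the difference estimate at the level of two derivatives requires more smoothness of the nonlinearity than $p\geq2$ guarantees, the usual device is to run the contraction in the \emph{underivatived} Strichartz metric on a ball that is bounded in the derivative norms, and conclude by completeness of that ball; a contraction directly in the full $X_T$-metric, as you propose, need not close. Neither point breaks your approach --- they are standard repairs --- but they are where the actual proof lives, and a referee of the blind attempt would ask for them.
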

\begin{rems}
\begin{enumerate}
\item[1.] 
The regularity condition $p\geq2$, gives the restriction $N-8-\alpha<2b$. This seems to be technical, because it don't appear in the energy;
\item[2.]
denote for simplicity $(N,\alpha,b)$ satisfies $(\mathcal C)$ if, $0<\alpha<N$ and $\max\{-(N+\alpha),-4(1+\frac\alpha N),N-8-\alpha\}<2b<0$ and $[N\geq5$ or $3\leq N\leq 4$ and $2\alpha+4b+N>0]$. 
\end{enumerate}
\end{rems}
The next inhomogeneous Gagliardo-Nirenberg type inequalities \cite{st2} are adapted to the above problems.
\begin{prop}\label{gag2}
Let $N\geq1$.
\begin{enumerate}
\item[1.]
Assume that $\max\{-4,-N\}<2b<0$ and $1< q< q^*$. Then, 
\begin{enumerate}
\item[a.]
there exists $C(N,q,b)>0$, such that for any $v\in H^2$,
$$\int_{\R^N}|x|^{2b}|v|^{2q}\,dx\leq C(N,q,b)\|v\|^E\|\Delta v\|^D;$$
\item[b.]
moreover,
$$C(N,q,b)=\frac{2q}E(\frac ED)^{\frac{D}2}\|Q\|^{-2(q-1)},$$
where $Q$ is a solution to \eqref{grnd2}.
\end{enumerate}
\item[2.]
Let $0<\alpha<N$, $\max\{-(N+\alpha),-4(1+\frac\alpha N)\}<2b<0$ and $1+\frac\alpha N< p< p^*$. Then, 
\begin{enumerate}
\item[a.]
there exists $C(N,p,b,\alpha)>0$, such that for any $u\in H^2$,
$$\int_{\R^N}(I_\alpha*|\cdot|^b|u|^p)|x|^b|u|^p\,dx\leq C(N,p,b,\alpha)\|u\|^A\|\Delta u\|^B.$$
\item[b.]
moreover,
$$C(N,p,b,\alpha)=\frac{2p}{A}(\frac AB)^{\frac{B}2}\|\phi\|^{-2(p-1)},$$
where $\phi$ is a solution to \eqref{grnd}.
\end{enumerate}
\end{enumerate}
\end{prop}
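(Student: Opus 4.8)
The plan is to treat both inequalities by Weinstein's variational method, since parts 1 and 2 are structurally identical once the nonlocal term of the Choquard case is controlled by the Hardy--Littlewood--Sobolev inequality. I focus on part 1; write
$$J(v):=\frac{\|v\|^E\|\Delta v\|^D}{\dint_{\R^N}|x|^{2b}|v|^{2q}\,dx},$$
and observe that the exponents $D=\frac{Nq-N-2b}2$ and $E=2q-D$ are forced by requiring $J$ to be invariant under both the amplitude scaling $v\mapsto\mu v$ and the dilation $v\mapsto v(\nu\,\cdot)$: the relation $E+D=2q$ handles the former, while matching powers of $\nu$ in $\int|x|^{2b}|v(\nu\cdot)|^{2q}\,dx=\nu^{-2b-N}\int|x|^{2b}|v|^{2q}\,dx$ against $\|v(\nu\cdot)\|=\nu^{-N/2}\|v\|$ and $\|\Delta v(\nu\cdot)\|=\nu^{2-N/2}\|\Delta v\|$ pins down $D$. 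Setting $\frac1{C(N,q,b)}:=\inf_{0\neq v\in H^2}J(v)$ then yields the inequality in part a, provided this infimum is positive and attained.

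The hardest step is showing that a minimizing sequence $(v_n)$ for $J$ converges, modulo the symmetries, to a nonzero limit $W$. I would first exhaust the two scalings to normalize $\|v_n\|=\|\Delta v_n\|=1$, reducing the problem to maximizing $\int|x|^{2b}|v_n|^{2q}\,dx$ over a bounded subset of $H^2$. The weight $|x|^{2b}$ with $b<0$ breaks translation invariance but retains the dilation symmetry and is singular at the origin, so the loss of compactness has to be excluded by a concentration--compactness argument (or, in the radial class $H^2_{rd}$, by the compactness of the radial Sobolev embedding away from the origin together with a separate treatment of the singularity). This is exactly where the admissibility range $\max\{-4,-N\}<2b<0$ and $1<q<q^*$ is consumed, the former guaranteeing integrability of the weighted term.

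Granting a minimizer $W$, the Euler--Lagrange equation reads
$$\frac E{\|W\|^2}W+\frac D{\|\Delta W\|^2}\Delta^2W-\frac{2q}{\int|x|^{2b}|W|^{2q}\,dx}\,|x|^{2b}|W|^{2(q-1)}W=0,$$
and choosing $\mu,\nu$ so that $Q:=\mu W(\nu\,\cdot)$ satisfies $\nu^4=\frac{D\|W\|^2}{E\|\Delta W\|^2}$ and $\nu^{-2b}\mu^{2q-2}=\frac{2q\|W\|^2}{E\int|x|^{2b}|W|^{2q}\,dx}$ transforms this into the ground state equation \eqref{grnd2}; by scale invariance $J(Q)=J(W)=1/C(N,q,b)$. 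Pairing \eqref{grnd2} with $Q$ gives $\|Q\|^2+\|\Delta Q\|^2=\int|x|^{2b}|Q|^{2q}\,dx=:P$, while the ground state constraint $\mathcal K[Q]=0$ gives $\|\Delta Q\|^2=\frac D{2q}P$; subtracting and using $E=2q-D$ forces $\|Q\|^2=\frac E{2q}P$.

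Substituting into $J(Q)$ and using $E+D=2q$ gives $J(Q)=\big(\tfrac E{2q}\big)^{E/2}\big(\tfrac D{2q}\big)^{D/2}P^{q-1}$, and eliminating $P$ through $P=\frac{2q}E\|Q\|^2$ produces, after collecting powers of $\frac E{2q}$ and $\frac D{2q}$, the value $C(N,q,b)=1/J(Q)=\frac{2q}E\big(\frac ED\big)^{D/2}\|Q\|^{-2(q-1)}$, which is the constant claimed in part b. For part 2 the argument is identical with $(q,D,E,Q)$ replaced by $(p,B,A,\phi)$: the Hardy--Littlewood--Sobolev inequality makes the nonlocal functional $\int(I_\alpha*|\cdot|^b|u|^p)|x|^b|u|^p\,dx$ well defined and continuous on $H^2$, the pairing of \eqref{grnd} with $\phi$ and the constraint $K[\phi]=0$ give $\|\phi\|^2=\frac A{2p}P'$ and $\|\Delta\phi\|^2=\frac B{2p}P'$ with $P':=\int(I_\alpha*|\cdot|^b|\phi|^p)|x|^b|\phi|^p\,dx$, and the same algebra yields $C(N,p,b,\alpha)=\frac{2p}A\big(\frac AB\big)^{B/2}\|\phi\|^{-2(p-1)}$.
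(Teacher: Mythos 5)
A preliminary remark: the paper contains no proof of Proposition \ref{gag2} at all --- the statement is imported wholesale from the reference \cite{st2} --- so there is no internal argument to compare yours against. Judged on its own terms, your Weinstein-type variational scheme is the standard (and almost certainly the cited reference's) route, and your computational skeleton is correct: the dilation/amplitude invariance of $J$ does force $D=\frac{Nq-N-2b}{2}$ and $E=2q-D$; the Euler--Lagrange equation you write for a minimizer $W$ is the right one; the rescaling $Q=\mu W(\nu\cdot)$ with $\nu^4=\frac{D\|W\|^2}{E\|\Delta W\|^2}$ and $\nu^{-2b}\mu^{2q-2}=\frac{2q\|W\|^2}{E\int_{\R^N}|x|^{2b}|W|^{2q}\,dx}$ does convert it into \eqref{grnd2}; and the identities $\|Q\|^2=\frac{E}{2q}P$, $\|\Delta Q\|^2=\frac{D}{2q}P$ together with $E+D=2q$ give
$$J(Q)=\Big(\tfrac{E}{2q}\Big)^{E/2}\Big(\tfrac{D}{2q}\Big)^{D/2}P^{q-1}=\tfrac{E}{2q}\Big(\tfrac{D}{E}\Big)^{D/2}\|Q\|^{2(q-1)},$$
whose reciprocal is exactly the claimed constant; the Choquard case is the same algebra with $(q,D,E,Q)$ replaced by $(p,B,A,\phi)$ once Hardy--Littlewood--Sobolev makes the nonlocal term finite.

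Two points are left doing more work than a sketch can bear. First, positivity of $\inf J$ is not a side condition: it \emph{is} part (a), so it must be proved, via the splitting $\int_{|x|<1}+\int_{|x|>1}$, H\"older with $|x|^{2b}\in L^\gamma(|x|<1)$, and Sobolev embedding; one checks that a suitable $\gamma$ exists precisely when $q<q^*$ and $2b>\max\{-4,-N\}$, which is where the hypotheses are consumed. Second, for attainment, your parenthetical fallback (reduce to the radial class) is the one step that would actually fail: the reduction to radial functions classically goes through Schwarz symmetrization, and $\|\Delta v^*\|\leq\|\Delta v\|$ is false in general for the bilaplacian, a well-known obstruction in fourth-order problems; moreover the proposition concerns all of $H^2$, not $H^2_{rd}$. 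What saves you, and makes the inhomogeneous case \emph{easier} than Weinstein's homogeneous one, is the decaying weight itself: for $b<0$ and $1<q<q^*$ the map $v\mapsto\int_{\R^N}|x|^{2b}|v|^{2q}\,dx$ is weakly sequentially continuous on bounded sets of $H^2$ (split $\{|x|<\epsilon\}\cup\{\epsilon<|x|<R\}\cup\{|x|>R\}$; H\"older--Sobolev makes the first piece uniformly small, $|x|^{2b}\leq R^{2b}\to0$ kills the third, and Rellich--Kondrachov handles the annulus), so a normalized minimizing sequence converges to a nonzero minimizer with no concentration-compactness or symmetrization machinery at all. With that substitution, your outline closes into a complete and correct proof.
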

\subsection{Main results}
This subsection contains the contribution of this note.
The first main goal of this manuscript is to prove the following scattering result.
\begin{thm}\label{sctr2}
Take $N\geq5$, $\max\{-4,-\frac N2\}<2b<0$ and $\max\{q_*,x_0\}<q<q^*$ such that $q\geq\frac32$. Let $u\in C(\R,H^2_{rd})$ be a global solution to \eqref{S2}. Then, there exists $u_\pm\in H^2$ such that
$$\lim_{t\to\pm\infty}\|u(t)-e^{it\Delta^2}u_\pm\|_{H^2}=0.$$
\end{thm}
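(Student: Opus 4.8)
The plan is to follow the Dodson--Murphy scheme \cite{dm}, replacing the concentration--compactness/rigidity machinery by a soft scattering criterion whose single hypothesis is then checked by a localized Morawetz estimate. The starting point is that a global solution in this regime stays under the ground-state threshold, so the scale-invariant quantities satisfy $\mathcal{ME}[u]<1$ and $\mathcal{MG}[u]<1$; combined with the sharp inhomogeneous Gagliardo--Nirenberg inequality of Proposition \ref{gag2}, these give a uniform bound $\sup_{t\in\R}\|u(t)\|_{H^2}\le\Lambda$ and, crucially, the coercivity of the constraint, $\mathcal K[u(t)]\ge\delta\,\|\Delta u(t)\|^2$ for some $\delta>0$ and all $t$. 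The radial hypothesis will be used throughout by way of the Strauss decay $|u(x)|\lesssim|x|^{-(N-1)/2}\|u\|_{H^2}$, both to make sense of and control the singular weight $|x|^{2b}$ with $b<0$ and to extract dispersive decay of $u$ away from the origin.

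First I would record a scattering criterion in the spirit of \cite{tao,dm}, adapted to the biharmonic propagator $e^{it\Delta^2}$: there exist $\varepsilon=\varepsilon(\Lambda)>0$ and $R=R(\Lambda)>0$ so that if
$$\liminf_{t\to+\infty}\int_{|x|<R}|u(t,x)|^2\,dx\le\varepsilon^2,$$
then the relevant global Strichartz norm of $u$ on $[T,\infty)$ is finite, and hence, by the Duhamel formula and the biharmonic Strichartz estimates, $u$ scatters forward in $H^2$. The proof is perturbative: one expands $u$ by Duhamel from a large time at which the local mass is small, splits the Duhamel nonlinearity into the contribution of $\{|x|<R\}$ --- controlled by the local $L^2$ smallness, the uniform $H^2$ bound and interpolation --- and that of $\{|x|\ge R\}$ --- controlled by the radial decay together with the decay of $e^{it\Delta^2}$ --- and closes a continuity bootstrap on the Strichartz norm. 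Reversing time gives the statement as $t\to-\infty$, whence both scattering states $u_\pm$.

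It then remains to verify the hypothesis of the criterion, which is where the localized Morawetz estimate enters. I would take a radial weight $a=a_R$ with $a(x)\approx|x|^2$ on $\{|x|\le R\}$ and bounded derivatives for $|x|\ge R$, and consider the truncated variance $z_R(t):=\int a_R|u|^2\,dx$ together with its derivative, the Morawetz action $\mathcal M_R(t):=z_R'(t)$, which obeys the a priori bound $|\mathcal M_R(t)|\lesssim R$ uniformly in $t$ (the implicit constant depending on $\Lambda$). Differentiating once more along the flow \eqref{S2} and integrating by parts, the leading part of $\mathcal M_R'(t)$ reproduces a multiple of the constraint $\mathcal K[u(t)]$ restricted to $\{|x|\le R\}$ --- with the critical nonlinear coefficient $\frac{D}{2q}$ appearing exactly as in the definition of $\mathcal K$ --- up to error terms supported in the transition region $\{|x|\ge R\}$. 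By coercivity this main term is bounded below by $\delta\|\Delta u\|^2$ localized to the ball. Integrating over a window $[0,T]$, using $|\mathcal M_R|\lesssim R$ for the boundary terms and the radial control of the errors, one obtains $\frac1T\int_0^T\mathcal K[u(t)]\,dt\lesssim\frac{R}{T}+o_R(1)$, and a pigeonhole argument then produces a sequence $t_n\to+\infty$ along which the localized mass is arbitrarily small, triggering the criterion.

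The main obstacle is the Morawetz computation for the fourth-order operator $\Delta^2$. In contrast to the second-order case, differentiating $\mathcal M_R$ produces a quadratic form involving derivatives of $u$ up to third order weighted by derivatives of $a_R$ up to sixth order, and the profile of $a_R$ must be chosen so that this form is favorably signed (or at least dominated by the coercive term) and so that the commutator errors living in $\{R<|x|<2R\}$ are genuinely lower order in $R$. The singular inhomogeneity $|x|^{2b}$ compounds the difficulty, since the error terms near the origin cannot be closed by plain Sobolev embedding and require the radial Strauss bound --- this is precisely where the radial hypothesis and the lower restrictions on $q$ (namely $q>\max\{q_*,x_0\}$ and $q\ge\tfrac32$, together with $N\ge5$) are consumed. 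Tracking the signs and the exact $R$-dependence of these error terms, so that after time-averaging they are absorbed by the localized $\|\Delta u\|^2$, is the crux of the argument.
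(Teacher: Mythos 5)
Your outline reproduces the paper's scheme step for step: sub-threshold variational coercivity (Lemmas \ref{bnd0} and \ref{crcv1}), a Tao/Dodson--Murphy-type scattering criterion for $e^{it\Delta^2}$ (Proposition \ref{crt}), and a truncated-variance Morawetz estimate with weight $\approx|x|^2$ on $\{|x|\le R\}$, the a priori bound $|\mathcal M_R|\lesssim R$, time-averaging with $R$ chosen in terms of $T$, and a pigeonhole argument to trigger the criterion (Lemmas \ref{bnd1} and \ref{evac}). There is, however, one genuine gap, located in your energy-evacuation step.

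You bound the coercive main term of $\mathcal M_R'$ below by $\delta\|\Delta u\|^2$ localized to the ball, and then assert that the pigeonhole ``produces a sequence $t_n\to+\infty$ along which the localized mass is arbitrarily small.'' That inference fails: smallness of $\int_{|x|<R}|\Delta u(t_n)|^2\,dx$ gives no control on $\int_{|x|<R}|u(t_n)|^2\,dx$. Indeed, a function equal to a constant $c\sim R^{-N/2}$ on the ball (suitably cut off outside) has vanishing localized bi-harmonic energy and bounded $H^2$ norm, yet localized mass of order one; since the only a priori information at time $t_n$ is the uniform $H^2$ bound, no reverse Poincar\'e-type inequality is available. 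The paper closes precisely this link by exploiting coercivity through the \emph{nonlinear potential term}: Lemma \ref{crcv1} gives the lower bound $\delta'\int_{\R^N}|x|^{2b}|\psi_Ru|^{2q}\,dx$ for the localized constraint, so the Morawetz estimate with $R=T^{1/3}$ yields $\int_0^T\int_{\R^N}|x|^{2b}|u|^{2q}\,dx\,dt\lesssim T^{1/3}$ (Lemma \ref{bnd1}); pigeonhole then gives $t_n\to\infty$ with $\int_{\R^N}|x|^{2b}|u(t_n)|^{2q}\,dx\to0$, and H\"older, together with the fact that $|x|^{2b}\ge R^{2b}$ on $\{|x|<R\}$ (recall $b<0$), converts this into
$$\int_{\{|x|<R\}}|u(t_n)|^2\,dx\lesssim R^{\frac N{q'}-\frac{2b}q}\Big(\int_{\{|x|<R\}}|x|^{2b}|u(t_n)|^{2q}\,dx\Big)^{\frac1q}\to0,$$
which is exactly the hypothesis of the scattering criterion (Lemma \ref{evac}). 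So the missing idea is this bridge from the Morawetz output to local mass decay: coercivity must be used in its potential-term form, and it is the singular weight $|x|^{2b}$ that makes the H\"older step work. With that substitution, the rest of your proposal coincides with the paper's proof.
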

\begin{rems}
Note that
\begin{enumerate}
\item[1.]
the condition $N\geq5$ is required in the proof of the scattering criterion;
\item[2.]
the radial assumption is required in one step of the proof of Morawetz estimate and in the scattering criterion;
\item[3.]
$x_0$ is the positive root of the polynomial function
$$(2X-1)(X-1)-\frac{2(2+b)}{N-4};$$
\item[4.]
the set of $q$ satisfying the above conditions is nonempty because
$$P(q^*)=2\Big(\frac{4+2b}{N-4}\Big)^2>0.$$
\end{enumerate}
\end{rems}
The second main goal of this manuscript is to prove the following scattering result.
\begin{thm}\label{sctr}
Let  $(N,\alpha,b)$ satisfying $(\mathcal C)$ and $\max\{p_*,x_\alpha\}<p<p^*$ such that $p\geq\max\{2,\frac32+\frac\alpha N\}$. Take $u\in C(\R,H^2_{rd})$ be a global solution to \eqref{S}. Then, there exists $u_\pm\in H^2$ such that
$$\lim_{t\to\pm\infty}\|u(t)-e^{it\Delta^2}u_\pm\|_{H^2}=0.$$
\end{thm}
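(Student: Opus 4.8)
The proof follows the Dodson--Murphy scheme \cite{dm}, transplanted to the nonlinear Schr\"odinger framework as in \cite{Holmer}: one reduces scattering to a Tao-type \emph{scattering criterion} \cite{tao}, whose hypothesis is then verified by a \emph{Morawetz estimate}. By the time-reversal symmetry of \eqref{S} it suffices to treat $t\to+\infty$, the case $t\to-\infty$ being identical. The three ingredients are (i) a uniform bound $\sup_{t\in\R}\|u(t)\|_{H^2}<\infty$ together with the coercivity it entails through the sharp Gagliardo--Nirenberg inequality of Proposition \ref{gag2}; (ii) a scattering criterion asserting that a uniformly bounded solution scatters as soon as a localized quantity is small along some sequence of times; (iii) a Morawetz estimate producing exactly that smallness. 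The first step is to secure the uniform bound for the \emph{given} global solution. Conservation of mass gives $\|u(t)\|=\|u_0\|$, so only $\|\Delta u(t)\|$ must be controlled. Writing the conserved energy as $E[u]=\|\Delta u(t)\|^2-\frac1p\int_{\R^N}(I_\alpha*|\cdot|^b|u|^p)|x|^b|u|^p\,dx$ and inserting Proposition \ref{gag2}, item~2, yields $\|\Delta u(t)\|^2\le E[u]+\frac{C(N,p,b,\alpha)}{p}\|u_0\|^A\|\Delta u(t)\|^B$. A continuity argument in $t\mapsto\|\Delta u(t)\|$, fed by this inequality and the explicit sharp constant expressed through the ground state $\phi$ of \eqref{grnd}, keeps the scale-invariant quantities $\mathcal{ME}'[u]$ and $\mathcal{MG}'[u]$ on the compact branch and delivers $\sup_t\|u(t)\|_{H^2}=:E_0<\infty$; the same variational comparison produces a coercive lower bound $K[u(t)]\gtrsim\|\Delta u(t)\|^2$ that will drive the virial below.

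Next I would prove the scattering criterion for the biharmonic group $e^{it\Delta^2}$: there exist $\varepsilon=\varepsilon(E_0)>0$ and $R=R(E_0)>0$ such that, if $\sup_t\|u(t)\|_{H^2}\le E_0$ and
$$\liminf_{t\to+\infty}\int_{|x|\le R}(I_\alpha*|\cdot|^b|u(t)|^p)|x|^b|u(t)|^p\,dx\le\varepsilon ,$$
then the relevant Strichartz norm of $u$ over $\R$ is finite, whence $u$ scatters. The argument is the standard Duhamel splitting: one opens the Duhamel formula at a large time $T$ where the localized nonlinear energy is small, decomposes the Duhamel term according to $\{|x|\le R\}$ versus $\{|x|>R\}$ and into high/low time-frequencies, and estimates each piece by the Strichartz and dispersive bounds for $e^{it\Delta^2}$ together with the weighted Hardy--Littlewood--Sobolev control of the nonlocal nonlinearity coming from $0<\alpha<N$ and condition $(\mathcal C)$; the radial Sobolev embedding absorbs the singular weight $|x|^b$ with $b<0$ near the origin. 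A continuity bootstrap then closes the global Strichartz bound, and the wave operators give $u_\pm=u_0+\int_0^{\pm\infty}e^{-is\Delta^2}[(I_\alpha*|\cdot|^b|u|^p)|x|^b|u|^{p-2}u]\,ds$ with $\|u(t)-e^{it\Delta^2}u_\pm\|_{H^2}\to0$. Here $N\ge5$ and $p\ge\max\{2,\tfrac32+\tfrac\alpha N\}$ enter to guarantee admissible Strichartz exponents and the H\"older room needed in these estimates.

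For the third ingredient I would run the Morawetz estimate. Choosing a radial weight $a=a_R$ equal to $\tfrac12|x|^2$ on $\{|x|\le R\}$ and suitably truncated for $|x|\gtrsim R$, set $M_a(t):=2\,\mathrm{Im}\int_{\R^N}\bar u\,\nabla a\cdot\nabla u\,dx$. Differentiating along \eqref{S} and invoking the biharmonic virial identity produces
$$\frac{d}{dt}M_a(t)=c\,K[u(t)]+\mathcal R(t),$$
where the leading term is (a localized multiple of) the constraint $K[u(t)]$ and $\mathcal R(t)$ collects the errors generated by the derivatives of $a$ outside $\{|x|\le R\}$, the commutator with $\Delta^2$, and the nonlocal/weighted structure. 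Since $|M_a(t)|\lesssim R\,E_0$ is bounded, integrating over a long interval $[T,T+\tau]$ and using the coercivity from the first step gives $\frac1\tau\int_T^{T+\tau}K[u(t)]\,dt\lesssim \frac{R\,E_0}{\tau}+\frac1\tau\int_T^{T+\tau}|\mathcal R(t)|\,dt$, so that, choosing $\tau$ large and then extracting a sequence, the averaged localized nonlinear energy tends to zero. This verifies the hypothesis of the criterion and completes the proof.

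The principal obstacle lies in the interplay of the first and third steps. In the mass-supercritical regime $p>p_*$ one has $B>2$, so energy conservation \emph{alone} does not bound $\|\Delta u\|$: the uniform bound $E_0$ and the sign-definiteness of $K[u(t)]$ must be extracted from the global-existence hypothesis through the sharp constant of Proposition \ref{gag2}, i.e.\ through the ground-state quantities $\mathcal{ME}'[u]$ and $\mathcal{MG}'[u]$, and verifying that the given global solution indeed stays on the compact branch is the crux of the whole scheme. The remaining difficulty is computational: the biharmonic virial identity involves derivatives of $a$ up to fourth order and a commutator with $\Delta^2$ much heavier than in the second-order case, and the error term $\mathcal R(t)$ must simultaneously absorb the Riesz potential $I_\alpha$ and the singular inhomogeneity $|x|^b$ at the origin. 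Controlling $\mathcal R(t)$ is precisely where the radial hypothesis, the lower thresholds $p>x_\alpha$ and $p\ge\tfrac32+\tfrac\alpha N$, and condition $(\mathcal C)$ are used, via the radial Sobolev embedding and weighted Hardy--Littlewood--Sobolev inequalities.
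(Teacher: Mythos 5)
Your three-step scheme (variational coercivity, Tao-type scattering criterion, Morawetz/virial evacuation) is exactly the paper's architecture (Lemmas \ref{bnd} and \ref{crcv}; Propositions \ref{crt} and \ref{fn1}; Proposition \ref{mrwtz} with Lemmas \ref{bnd1'} and \ref{evac1}), and your steps (ii) and (iii) track the actual proof closely, including the bounded truncated virial, the $R$-dependent errors absorbed by the radial Strauss estimate and Hardy--Littlewood--Sobolev, and the paper's choice $R=T^{1/3}$, which is an un-averaged version of your $[T,T+\tau]$ argument. Two deviations are minor: the paper's criterion is phrased with smallness of the localized \emph{mass} $\int_{|x|<R}|u|^2\,dx$, not of the localized potential energy --- mass is what the almost-conservation bound $|\frac{d}{dt}\int\psi_R|u(t)|^2\,dx|\lesssim R^{-1}$ propagates over the interval $[T-\varepsilon^{-\beta},T]$ in Proposition \ref{fn1}, so your variant would have to be re-routed through the mass anyway; and the Morawetz lower bound in the paper is the truncated coercivity of Lemma \ref{crcv}, yielding $\delta'\|\psi_R u\|^2_{2Np/(N+\alpha+2b)}$, rather than your global $c\,K[u(t)]$.

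The genuine gap is in your step (i). You assert that the uniform $H^2$ bound and the coercivity $K[u(t)]\gtrsim\|\Delta u(t)\|^2$ can be ``extracted from the global-existence hypothesis'' by a continuity argument fed by $\|\Delta u(t)\|^2\le E[u]+\frac{C}{p}\|u_0\|^A\|\Delta u(t)\|^B$. That cannot work: since $p>p_*$ gives $B>2$, this inequality has a bounded and an unbounded branch, and selecting the bounded branch requires the \emph{hypothesis} that the data lie below the ground-state threshold, $\max\{\mathcal{ME}'(u_0),\mathcal{MG}'(u_0)\}<1$ --- exactly the assumption carried by every lemma of the paper's Section 4 (Lemma \ref{bnd}, inherited from Lemma \ref{bnd0}, which is where the continuity argument in the style of Lemma \ref{abs} legitimately closes). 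Global existence alone does not place a solution on the compact branch: the soliton $u(t)=e^{-it}\phi$ built from the ground state \eqref{grnd} is global, radial, uniformly bounded in $H^2$, satisfies $K[u(t)]\equiv 0$, and does not scatter, so no bootstrapping from mere globality can produce your coercive lower bound, nor indeed the conclusion of the theorem. (The theorem's statement in the paper omits this hypothesis, but its proof does not; your proposal, read literally, attempts to prove the literal statement, which is false without the threshold.) Once the sub-threshold assumption at $t=0$ is added, your step (i) becomes the paper's Lemma \ref{bnd} and the rest of your outline goes through.
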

\begin{rems}
Note that
\begin{enumerate}
\item[1.]
$x_\alpha$ is the positive root of the polynomial
$$(X-1)(2X-1)-\frac{4+2b+\alpha}{N-4};$$
\item[2.]
the set of $p$ satisfying the above conditions is nonempty because
$$P_\alpha(p^*)=2\Big(\frac{4+2b+\alpha}{N-4}\Big)^2>0;$$
\item[3.]
it seems that the local and non-local source terms in the above Schr\"odinger problems have a similar asymptotic behavior.
\end{enumerate}
\end{rems}
\subsection{Useful estimates}
Let us gather some classical tools needed in the sequel.
\begin{defi}\label{adm}
Take $N\geq1$ and $s\in[0,2)$. A couple of real numbers $(q,r)$ is said to be $s$-admissible $($admissible for $0$-admissible$)$ if 
$$\frac{2N}{N-2s}\leq r<\frac{2N}{N-4},\quad2\leq q,r\leq\infty\quad\mbox{and}\quad N(\frac12-\frac1r)=\frac4q+s.$$
Denote the set of $s$-admissible pairs by $\Gamma_s$ and $\Gamma:=\Gamma_0$. If $I$ is a time slab, one denotes the Strichartz spaces
$$S^s(I):=\cap_{(q,r)\in\Gamma_s}L^q(I,L^r)\quad\mbox{and}\quad {S^s}'(I):=\cap_{(q,r)\in\Gamma_{-s_c}}L^{q'}(I,L^{r'}).$$
\end{defi}
Recall the Strichartz estimates \cite{bp,guo}.
\begin{prop}\label{prop2}
Let $N \geq 1$, $0\leq s<2$ and $t_0\in I\subset \R$, an interval. Then,
\begin{enumerate}
\item[1.]
$\sup_{(q,r)\in\Gamma}\|u\|_{L^q(I,L^r)}\lesssim\|u(t_0)\|+\inf_{(\tilde q,\tilde r)\in\Gamma}\|i\dot u+\Delta^2 u\|_{L^{\tilde q'}(I,L^{\tilde r'})}$;
\item[2.]
$\sup_{(q,r)\in\Gamma}\|\Delta u\|_{L^q(I,L^r)}\lesssim\|\Delta u(t_0)\|+\|i\dot u+\Delta^2 u\|_{L^2(I,\dot W^{1,\frac{2N}{2+N}})}, \quad\forall N\geq3$;
\item[3.]
$\sup_{(q,r)\in\Gamma_s}\|u\|_{L^q(I,L^r)}\lesssim\|u(t_0)\|_{\dot H^s}+\inf_{(\tilde q,\tilde r)\in\Gamma_{-s}}\|i\dot u+\Delta^2 u\|_{L^{\tilde q'}(I,L^{\tilde r'})}$.
\end{enumerate}
\end{prop}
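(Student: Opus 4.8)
The plan is to reduce all three inequalities to mapping properties of the free bi-Laplacian propagator $e^{it\Delta^2}$, since by Duhamel's formula any $u$ with $i\dot u+\Delta^2u=g$ and prescribed datum at $t_0$ can be written as $u(t)=e^{i(t-t_0)\Delta^2}u(t_0)-i\int_{t_0}^te^{i(t-s)\Delta^2}g(s)\,ds$. The two structural pillars are the $L^2$-isometry $\|e^{it\Delta^2}f\|=\|f\|$, immediate from Plancherel since the multiplier $e^{it|\xi|^4}$ is unimodular, and the dispersive decay $\|e^{it\Delta^2}f\|_\infty\lesssim|t|^{-N/4}\|f\|_1$. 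The first step I would carry out is this decay. The Schwartz kernel is the oscillatory integral $\int_{\R^N}e^{i(x\cdot\xi+t|\xi|^4)}\,d\xi$, whose phase $t|\xi|^4+x\cdot\xi$ has \emph{degenerate} critical points. Rescaling $\xi\mapsto|t|^{-1/4}\xi$ extracts the factor $|t|^{-N/4}$ and reduces matters to bounding $\int e^{i(|t|^{-1/4}x\cdot\eta\pm|\eta|^4)}\,d\eta$ uniformly in $x$ and $t$, which is handled by van der Corput-type estimates; this recovers the sharp rate $|t|^{-N/4}$ dictated by the quartic homogeneity.

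Interpolating the dispersive bound against the $L^2$ isometry yields $\|e^{it\Delta^2}f\|_r\lesssim|t|^{-\frac N4(1-\frac2r)}\|f\|_{r'}$ for $2\le r\le\infty$, i.e. an abstract propagator with decay exponent $\sigma=\frac N4$. I would feed this into the Keel--Tao machinery: a pair is sharp $\sigma$-admissible when $\frac1q=\sigma(\frac12-\frac1r)$, which for $\sigma=\frac N4$ is exactly $N(\frac12-\frac1r)=\frac4q$, the relation defining $\Gamma=\Gamma_0$. This gives the homogeneous estimate $\|e^{it\Delta^2}f\|_{L^q(I,L^r)}\lesssim\|f\|$ for every $0$-admissible $(q,r)$, and the retarded estimate with dual pairs in $\Gamma$ by the $TT^*$ argument; because the admissibility range imposes $r<\frac{2N}{N-4}$, the delicate Keel--Tao endpoint never occurs, so the Christ--Kiselev lemma suffices to pass from the full-line to the retarded operator, and combining the two pieces yields item 1. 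For item 3, given $(q,r)\in\Gamma_s$ I pick $r_0$ with $(q,r_0)\in\Gamma_0$; the scaling relations force $\frac1{r_0}-\frac1r=\frac sN$, so Sobolev embedding $\dot W^{s,r_0}\hookrightarrow L^r$ together with the commutation $|\nabla|^se^{it\Delta^2}=e^{it\Delta^2}|\nabla|^s$ reduces the $\dot H^s$ bound to the $0$-admissible one, and duality plus Christ--Kiselev handle the forcing term in $\Gamma_{-s}$.

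Item 2 is the genuinely different one. Commuting $\Delta$ through the propagator reduces it to a retarded estimate for $\int_{t_0}^te^{i(t-s)\Delta^2}\Delta g(s)\,ds$ in $L^q(I,L^r)$, $(q,r)\in\Gamma$; writing $\Delta g=|\nabla|^2g$, one sees that the desired control by $\||\nabla|g\|_{L^2(I,L^{2N/(N+2)})}$ amounts to placing the forcing in a \emph{non-dual} space, since the associated pair $(2,\frac{2N}{N-2})$ lies in $\Gamma_{-1}$ rather than in the dual of $\Gamma$. Hence a plain $TT^*$/duality argument cannot produce it; instead it is a smoothing estimate exploiting the regularity gain of the bi-harmonic Duhamel operator, effectively trading one derivative on $g$. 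I would derive it from the non-dual inhomogeneous Strichartz estimates in the spirit of Foschi and Vilela, verifying the scaling identity and the acceptability conditions for the pairs in $\Gamma$ against $(2,\frac{2N}{N-2})\in\Gamma_{-1}$; the hypothesis $N\geq3$ is exactly what keeps $\frac{2N}{N-2}$ finite and the relevant exponents inside the admissible window.

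The main obstacle is twofold. The analytic heart is the dispersive estimate, where the quartic phase is degenerate, so the non-stationary/non-degenerate phase formulas do not apply directly and one must combine the parabolic-type rescaling with uniform oscillatory-integral bounds to reach the sharp $|t|^{-N/4}$ decay. The second delicate point is item 2: because its forcing norm corresponds to a $\Gamma_{-1}$ pair at the $L^2_t$ level, it falls outside the self-dual Keel--Tao/Christ--Kiselev framework and must be obtained as a smoothing-type, non-dual inhomogeneous Strichartz estimate, with careful bookkeeping of the derivative that is gained.
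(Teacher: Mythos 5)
Note first that the paper contains no proof of this proposition: it is recalled verbatim from the cited references (Pausader \cite{bp} and Guo \cite{guo}), so your attempt can only be measured against the arguments there. Your overall architecture for item 1 and for the homogeneous part of item 3 is the standard one and is sound: the Ben-Artzi--Koch--Saut dispersive bound $\|e^{it\Delta^2}f\|_\infty\lesssim|t|^{-N/4}\|f\|_1$ (degenerate quartic phase handled by the parabolic rescaling plus uniform oscillatory-integral bounds), interpolation with the $L^2$ isometry, Keel--Tao with $\sigma=\frac N4$ (whose sharp-admissibility line is exactly $N(\frac12-\frac1r)=\frac4q$), Christ--Kiselev for mixed retarded pairs --- legitimate here because every $(q,r)\in\Gamma$ has $q>2$ strictly, the endpoint $r=\frac{2N}{N-4}$ being excluded --- and the Sobolev reduction $\dot W^{s,r_0}\hookrightarrow L^r$ with $\frac1{r_0}-\frac1r=\frac sN$ for $\Gamma_s$ with $s\geq0$.

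The genuine gap is item 2, and it silently propagates to the forcing part of item 3. You correctly diagnose that $(2,\frac{2N}{N-2})\in\Gamma_{-1}$ puts item 2 outside the self-dual framework, but the tool you choose --- Foschi/Vilela non-dual inhomogeneous estimates --- cannot deliver it: those are Lebesgue-to-Lebesgue bounds with zero net derivatives, while item 2 requires a net \emph{gain} of one derivative between forcing and solution. Concretely, the scaling identity you propose to ``verify'' fails by exactly one derivative: for $Th=\int_{s<t}e^{i(t-s)\Delta^2}h\,ds$, a bound $\|Th\|_{L^q(I,L^r)}\lesssim\|h\|_{L^2(I,L^{\frac{2N}{N+2}})}$ forces $4+\frac4q+\frac Nr=2+\frac{N+2}2$, i.e. $\frac4q+\frac Nr=\frac N2-1$, whereas $(q,r)\in\Gamma$ satisfies $\frac4q+\frac Nr=\frac N2$; and no Sobolev embedding can move a derivative onto the output side. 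The missing idea --- the one actually used in \cite{bp} --- is the frequency-localized dispersive estimate: on frequencies $|\xi|\sim2^k$ the biharmonic flow disperses like a Schr\"odinger flow at rescaled time, $\|e^{it\Delta^2}P_k\|_{L^1\to L^\infty}\lesssim(2^{2k}|t|)^{-N/2}$ in the relevant regime, so Keel--Tao applied blockwise yields $\|e^{it\Delta^2}P_kf\|_{L^2(I,L^{\frac{2N}{N-2}})}\lesssim2^{-k}\|P_kf\|$, and Littlewood--Paley summation gives the homogeneous gain estimate $\|e^{it\Delta^2}f\|_{L^2(I,L^{\frac{2N}{N-2}})}\lesssim\|f\|_{\dot H^{-1}}$; its dual, composed with the homogeneous $\Gamma$ estimate and Christ--Kiselev, is item 2. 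The same caveat applies to your phrase ``duality plus Christ--Kiselev handle the forcing term in $\Gamma_{-s}$'' in item 3: the homogeneous estimate for $\Gamma_{-s}$ pairs is again a derivative-gaining (smoothing) estimate that your Sobolev trick --- valid only in the loss direction $s\geq0$ --- does not supply, and it must come from the same localized-dispersive machinery (such smoothing is possible precisely because, unlike the second-order Schr\"odinger group, $e^{it\Delta^2}$ has no Galilean invariance to forbid it).
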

Let us recall a Hardy-Littlewood-Sobolev inequality \cite{el}.
\begin{lem}\label{Hardy-Littlewwod-Sobolev}
Let $0 <\lambda < N\geq1$ and $1<s,r<\infty$ be such that $\frac1r +\frac1s +\frac\lambda N = 2$. Then,
$$\int_{\R^N\times\R^N} \frac{f(x)g(y)}{|x-y|^\lambda}\,dx\,dy\leq C(N,s,\lambda)\|f\|_{r}\|g\|_{s},\quad\forall f\in L^r,\,\forall g\in L^s.$$
\end{lem}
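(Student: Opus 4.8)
The statement is the classical Hardy--Littlewood--Sobolev inequality, so the plan is to reduce it by duality to a mapping property of the Riesz potential and then establish that mapping property by a real-variable splitting argument. After replacing $f,g$ by $|f|,|g|$, I would first dualize: by H\"older's inequality the bilinear form is controlled by
$$\int_{\R^N\times\R^N}\frac{f(x)g(y)}{|x-y|^\lambda}\,dx\,dy=\int_{\R^N}f(x)\,(I_\lambda g)(x)\,dx\leq\|f\|_r\,\|I_\lambda g\|_{r'},$$
where $I_\lambda g(x):=\int_{\R^N}|x-y|^{-\lambda}g(y)\,dy$ and $r'$ is the conjugate exponent of $r$. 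Thus everything reduces to the fractional integration bound $\|I_\lambda g\|_{r'}\lesssim\|g\|_s$, and the key structural point is that the homogeneity constraint forced by this bound is exactly $\frac1{r'}=\frac1s+\frac\lambda N-1$, i.e. $\frac1r+\frac1s+\frac\lambda N=2$, which is precisely the hypothesis.

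Next I would prove a pointwise bound by truncating the kernel at a radius $\rho>0$. Splitting $I_\lambda g(x)$ into the near region $|x-y|<\rho$ and the far region $|x-y|\geq\rho$, a dyadic decomposition of the near region together with the elementary estimate $\int_{|x-y|<R}g(y)\,dy\lesssim R^N Mg(x)$ (with $M$ the Hardy--Littlewood maximal operator) gives, since $N-\lambda>0$,
$$\int_{|x-y|<\rho}\frac{g(y)}{|x-y|^\lambda}\,dy\lesssim\rho^{N-\lambda}\,Mg(x).$$
On the far region H\"older's inequality in the $y$ variable yields
$$\int_{|x-y|\geq\rho}\frac{g(y)}{|x-y|^\lambda}\,dy\leq\|g\|_s\Big(\int_{|y|\geq\rho}|y|^{-\lambda s'}\,dy\Big)^{1/s'}\lesssim\|g\|_s\,\rho^{N/s'-\lambda},$$
where the far integral converges precisely because $\lambda s'>N$, a condition equivalent to $r>1$. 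Optimizing the sum of the two bounds over $\rho$ (the near term is increasing and the far term decreasing in $\rho$, so a genuine minimum exists) produces the pointwise interpolation inequality $I_\lambda g(x)\lesssim(Mg(x))^{1-\beta}\,\|g\|_s^{\beta}$ with $\beta=\frac{s(N-\lambda)}N$.

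Finally I would take the $L^{r'}$ norm and invoke the Hardy--Littlewood maximal inequality. The exponents have been arranged so that $r'(1-\beta)=s$, whence
$$\|I_\lambda g\|_{r'}\lesssim\|g\|_s^{\beta}\,\big\|Mg\big\|_{s}^{1-\beta}\lesssim\|g\|_s^{\beta}\,\|g\|_s^{1-\beta}=\|g\|_s,$$
the last step using $\|Mg\|_s\lesssim\|g\|_s$, valid because $s>1$; combining with the duality step gives the claim with constant $C=C(N,s,\lambda)$. The main obstacle, and the only place where the hypotheses enter essentially, is verifying that the two exponent constraints demanded by the proof---namely $\lambda s'>N$ for the far-region integral and $s>1$ for the maximal inequality---are genuine consequences of $1<r,s<\infty$ together with $\frac1r+\frac1s+\frac\lambda N=2$; this is short but necessary bookkeeping of the scaling. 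An alternative slicker route would bypass the maximal operator by observing that $|\cdot|^{-\lambda}$ lies in the weak space $L^{N/\lambda,\infty}$ and applying the weak-type Young inequality, but this merely repackages the same interpolation.
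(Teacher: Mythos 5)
Your proof is correct, and every piece of exponent bookkeeping checks out: the duality reduction forces $\frac1{r'}=\frac1s+\frac\lambda N-1$, the far-region convergence condition $\lambda s'>N$ is indeed equivalent to $r>1$ (and, as a bonus you did not note explicitly, it is also exactly the condition $1-\beta>0$ needed for your optimization to make sense, since $s(N-\lambda)<N\iff\lambda s'>N$), and $(1-\beta)r'=s$ holds identically under the hypothesis, so the Hardy--Littlewood maximal inequality at exponent $s>1$ closes the argument. The comparison with the paper, however, is of a different nature than usual: the paper gives no proof of this lemma at all, quoting it as the classical Hardy--Littlewood--Sobolev inequality with a citation to Lieb's \emph{Analysis} \cite{el}. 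The proof in that reference is genuinely different from yours: it proceeds through layer-cake (level-set) decompositions and, in the sharp diagonal case, through rearrangement inequalities and conformal invariance, yielding optimal constants. Your route is Hedberg's elementary pointwise interpolation
$$I_\lambda g(x)\lesssim\big(Mg(x)\big)^{1-\beta}\|g\|_s^{\beta},\qquad\beta=\tfrac{s(N-\lambda)}N,$$
which buys a short, self-contained real-variable proof at the cost of a non-sharp constant --- entirely sufficient here, since the paper only uses the inequality qualitatively (e.g.\ in Corollary \ref{lhs2} and the Morawetz estimates). Your closing remark is also accurate: the weak-Young route via $|\cdot|^{-\lambda}\in L^{N/\lambda,\infty}$ is the Marcinkiewicz-interpolation repackaging of the same scaling structure.
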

The next consequence \cite{st}, is adapted to the Choquard problem.
\begin{cor}\label{cor}\label{lhs2}
Let $0 <\lambda < N\geq1$ and $1<s,r,q<\infty$ be such that $\frac1q+\frac1r+\frac1s=1+\frac\alpha N$. Then,
$$\|(I_\alpha*f)g\|_{r'}\leq C(N,s,\alpha)\|f\|_{s}\|g\|_{q},\quad\forall f\in L^s, \,\forall g\in L^q.$$
\end{cor}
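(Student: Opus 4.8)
The plan is to deduce this bound directly from the Hardy--Littlewood--Sobolev inequality of Lemma \ref{Hardy-Littlewwod-Sobolev} by combining it with H\"older's inequality and a duality argument. First I would record that the Riesz potential acts as an explicit convolution, $(I_\alpha*f)(x)=c_{N,\alpha}\int_{\R^N}\frac{f(y)}{|x-y|^{N-\alpha}}\,dy$, where $c_{N,\alpha}=\frac{\Gamma(\frac{N-\alpha}2)}{\Gamma(\frac\alpha2)\pi^{N/2}2^\alpha}$ is the normalizing constant. This identifies the exponent in Lemma \ref{Hardy-Littlewwod-Sobolev} as $\lambda:=N-\alpha\in(0,N)$, and the factor $c_{N,\alpha}$ will be harmlessly absorbed into the final constant. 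By the duality characterization of the $L^{r'}$ norm, it then suffices to control $\big|\int_{\R^N}(I_\alpha*f)\,g\,h\,dx\big|$ uniformly over all $h\in L^r$ with $\|h\|_r\leq1$.

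Next I would estimate the resulting double integral. Writing out the convolution and bounding the integrand by its modulus gives $\big|\int_{\R^N}(I_\alpha*f)\,g\,h\,dx\big|\leq c_{N,\alpha}\int_{\R^N\times\R^N}\frac{|(gh)(x)|\,|f(y)|}{|x-y|^{N-\alpha}}\,dx\,dy$. I then apply Lemma \ref{Hardy-Littlewwod-Sobolev} to the pair $(gh,f)$, letting $f$ carry the exponent $s$ so that the constant produced is exactly $C(N,s,N-\alpha)=C(N,s,\alpha)$; the admissibility relation of the lemma forces $gh\in L^b$ with $\frac1b:=1+\frac\alpha N-\frac1s$. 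To recover the stated hypothesis I split this factor by H\"older's inequality, $\|gh\|_b\leq\|g\|_q\|h\|_r$, which is licit provided $\frac1b=\frac1q+\frac1r$; substituting the definition of $b$ turns this requirement into precisely the constraint $\frac1q+\frac1r+\frac1s=1+\frac\alpha N$ assumed in the statement. Chaining the two inequalities yields $\big|\int_{\R^N}(I_\alpha*f)\,g\,h\,dx\big|\leq C(N,s,\alpha)\|f\|_s\|g\|_q\|h\|_r$, and taking the supremum over $\|h\|_r\leq1$ gives the claim.

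The argument is essentially routine, so the single point that genuinely requires care is the verification of exponent admissibility: one must confirm that the intermediate exponent $b$ given by $\frac1b=\frac1q+\frac1r$ lies in the open interval $(1,\infty)$ demanded jointly by Lemma \ref{Hardy-Littlewwod-Sobolev} and by H\"older's inequality, and likewise that $s$ and the associated conjugate indices remain in their admissible intervals. This is precisely the step at which the standing relation $\frac1q+\frac1r+\frac1s=1+\frac\alpha N$, together with $0<\alpha<N$ and $1<q,r,s<\infty$, must be invoked, and it is the only non-mechanical part of the proof; once this bookkeeping is settled, no further analytic input is needed.
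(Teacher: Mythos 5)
Your derivation is the standard one, and it is essentially what the source the paper leans on does: the paper itself offers no proof of this corollary (it is quoted from \cite{st}, ultimately going back to the Hardy--Littlewood--Sobolev inequality in \cite{el}), and the route via duality for $L^{r'}$, the kernel identification $\lambda=N-\alpha$, Lemma \ref{Hardy-Littlewwod-Sobolev} applied to the pair $(gh,f)$, and H\"older's splitting $\|gh\|_b\leq\|g\|_q\|h\|_r$ with $\frac1b=\frac1q+\frac1r$ is exactly the expected derivation. Your exponent arithmetic is correct: the HLS admissibility $\frac1b+\frac1s+\frac{N-\alpha}N=2$ combined with the H\"older splitting reproduces precisely the hypothesis $\frac1q+\frac1r+\frac1s=1+\frac\alpha N$.

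The one inaccuracy is in your closing paragraph, where you assert that the stated hypotheses themselves settle the admissibility of $b$. They do not: the scaling relation together with $1<q,r,s<\infty$ does \emph{not} force $\frac1b=\frac1q+\frac1r<1$. Concretely, take $N=4$, $\alpha=2$, $s=4$, $q=r=\frac85$; then $\frac1q+\frac1r+\frac1s=\frac58+\frac58+\frac14=\frac32=1+\frac\alpha N$, yet $\frac1b=\frac54>1$, so Lemma \ref{Hardy-Littlewwod-Sobolev} cannot be applied to $(gh,f)$ with exponents $(b,s)$. This is not a removable technicality: in that regime $\frac1s\leq\frac\alpha N$, and $I_\alpha*f$ can be identically infinite for suitable $f\in L^s$ (e.g.\ $f(y)\simeq|y|^{-1}(\log|y|)^{-1}$ at infinity in the example above), so the corollary as printed tacitly assumes the extra condition $\frac1s>\frac\alpha N$ (equivalently $\frac1q+\frac1r<1$, i.e.\ $b>1$). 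To make your proof complete you should state this condition explicitly and check it, or at least note that it holds for every choice of exponents actually used in the paper; saying the bookkeeping "is settled" by the standing relation alone is the one step of your argument that is false as written.
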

Finally, let us give an abstract result.
\begin{lem}\label{abs}
Let $T>0$ and $X\in C([0,T],\R_+)$ such that $$X\leq a+bX^{\theta}\mbox{ on } [0,T],$$
where $a$, $b>0$, $\theta>1$, $a<(1-\frac{1}{\theta})(\theta b)^{\frac{1}{1-\theta}}$ and $X(0)\leq (\theta b)^{\frac{1}{1-\theta}}$. Then
$$X\leq\frac{\theta}{\theta -1}a \mbox{ on } [0,T].$$
\end{lem}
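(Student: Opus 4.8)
The plan is to run a continuity (bootstrap) argument on the sublevel set of the auxiliary function $h(x):=x-bx^{\theta}$, which recasts the hypothesis $X\le a+bX^{\theta}$ as the pointwise constraint $h(X(t))\le a$ for every $t\in[0,T]$.

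First I would analyze $h$ on $[0,\infty)$. Since $h(0)=0$ and $h'(x)=1-b\theta x^{\theta-1}$ vanishes exactly at $x_*:=(\theta b)^{1/(1-\theta)}$, the function $h$ increases on $[0,x_*]$, attains there its global maximum, and decreases afterwards; a direct computation using $x_*^{\theta-1}=(\theta b)^{-1}$ gives $h(x_*)=(1-\tfrac1\theta)x_*=(1-\tfrac1\theta)(\theta b)^{1/(1-\theta)}$. The standing hypothesis on $a$ is precisely $a<h(x_*)$, so the chosen level lies strictly below the peak. Consequently the equation $h(x)=a$ has exactly two positive roots $x_1<x_*<x_2$, and the sublevel set $\{x\ge0:\ h(x)\le a\}$ equals $[0,x_1]\cup[x_2,\infty)$, leaving an open ``forbidden band'' $(x_1,x_2)$ on which $h>a$.

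Next I would pin down the initial position and let connectedness propagate it. Evaluating the hypothesis at $t=0$ gives $h(X(0))\le a$, and by assumption $X(0)\le x_*$; since $h$ is increasing on $[0,x_*]$ with $h(x_1)=a$, this forces $X(0)\in[0,x_1]$. The continuity of $X$ then finishes the argument: the trajectory starts in $[0,x_1]$, must remain in the sublevel set $[0,x_1]\cup[x_2,\infty)$ for all time, and cannot reach $[x_2,\infty)$ without crossing the band $(x_1,x_2)$, which the intermediate value theorem forbids; hence $X(t)\le x_1$ for every $t\in[0,T]$.

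It remains to convert $X\le x_1$ into the stated bound, that is, to show $x_1\le\frac{\theta}{\theta-1}a$. Using $a=h(x_1)=x_1-bx_1^{\theta}$, this inequality is equivalent, after clearing denominators and dividing by $x_1>0$, to $\theta b x_1^{\theta-1}\le1$, i.e.\ to $x_1^{\theta-1}\le x_*^{\theta-1}$, which holds simply because $x_1\le x_*$. I expect the only genuinely delicate point to be the continuity/connectedness step, where one must rule out that $X$ escapes $[0,x_1]$ by jumping over the forbidden band; the monotonicity analysis of $h$ and the final algebraic reduction to $x_1\le x_*$ are routine.
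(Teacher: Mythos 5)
Your proof is correct and is essentially the paper's own argument: the paper works with $f(x):=bx^{\theta}-x+a$, which is exactly $a-h(x)$, and runs the same monotonicity-plus-continuity argument on the set where $f\ge 0$, i.e.\ your sublevel set $\{h\le a\}$, using the same initial confinement $X(0)\le(\theta b)^{\frac{1}{1-\theta}}$ to place the trajectory in the left component. The only cosmetic difference is the final step, where the paper simply checks $f\bigl(\tfrac{\theta}{\theta-1}a\bigr)\le 0$ and invokes monotonicity, while you derive $x_1\le\tfrac{\theta}{\theta-1}a$ algebraically from $a=h(x_1)$ and $x_1\le x_*$; both reductions are routine and equivalent.
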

\begin{proof}
The function $f(x):=bx^\theta-x +a$ is decreasing on $[0,(b\theta)^{\frac1{1-\theta}}]$ and increasing on $[(b\theta)^\frac1{1-\theta} ,\infty)$. The assumptions imply that $f((b\theta)^\frac1{1-\theta})< 0$ and $f(\frac\theta{\theta-1}a)\leq0$. As $f(X(t))\geq 0$, $f(0) > 0$ and $X(0)\leq(b\theta)^\frac1{1-\theta}$, we conclude the proof by a continuity argument.
\end{proof}
\section{The Schr\"odinger problem \eqref{S2}}
The goal pf this section is to prove Theorem \ref{sctr2}. Let us collect some estimates needed in the proof of the scattering of global solutions to the focusing Schr\"odinger problem \eqref{S2}.
\subsection{Variational Analysis}
\begin{lem}\label{bnd0}
Take $N\geq3$, $\{-4,-\frac N2\}<2b<0$ and $q_*<q<q^*$. Let $u_0\in H^2$ satisfying 
$$\max\{\mathcal M\mathcal E(u_0),{\mathcal M\mathcal G}(u_0)\}<1.$$
Then, there exists $\delta>0$ such that the solution $u\in C(\R,H^2)$ satisfies
$$\max\{\sup_{t\in\R}\mathcal M\mathcal E(u(t)),\sup_{t\in\R}{\mathcal M\mathcal G}(u(t))\}<1-\delta.$$
\end{lem}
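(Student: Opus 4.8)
The quantity $\mathcal M\mathcal E$ is built from the conserved mass and energy, so $\mathcal M\mathcal E[u(t)] = \mathcal M\mathcal E[u_0] < 1$ for every $t$; the energy--mass part of the claim is therefore immediate, with room to spare, and the whole difficulty lies in the gradient--mass quantity $\mathcal M\mathcal G$. The plan is to show that $\mathcal M\mathcal E$ controls $\mathcal M\mathcal G$ through a single scalar inequality, and then to propagate the strict smallness of $\mathcal M\mathcal G[u_0]$ in time by a continuity argument.

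First I would freeze the mass, writing $m := \|u(t)\| = \|u_0\|$, and bound the potential energy by the sharp Gagliardo--Nirenberg inequality of Proposition \ref{gag2},
$$\frac1q\int_{\R^N}|x|^{2b}|u(t)|^{2q}\,dx \le \frac{2}{E}\Big(\frac ED\Big)^{\frac D2}\|Q\|^{-2(q-1)}m^{E}\,\|\Delta u(t)\|^{D}.$$
Writing $g := \|\Delta u(t)\|$ and $\kappa := \frac2E(\frac ED)^{\frac D2}\|Q\|^{-2(q-1)}$, conservation of the focusing energy $\mathcal E[u]=\|\Delta u\|^2-\frac1q\int_{\R^N}|x|^{2b}|u|^{2q}\,dx$ gives the lower bound $\mathcal E[u_0] = \mathcal E[u(t)] \ge h(g)$ with $h(g) := g^{2} - \kappa m^{E} g^{D}$. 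Since $q > q_*$ forces $D > 2$, the function $h$ increases from $0$ to a single maximum and then decreases.

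The heart of the matter is to recognize the maximum of $h$ as the ground state level. Using the Pohozaev-type identities for a solution $Q$ of \eqref{grnd2}, namely $\int_{\R^N}|x|^{2b}|Q|^{2q}\,dx = \frac{2q}{D}\|\Delta Q\|^2$ from the constraint $\mathcal K[Q]=0$, and $\|Q\|^2 = \frac ED\|\Delta Q\|^2$ from pairing the equation with $Q$, a direct computation shows that, when $m = \|Q\|$, the maximizer of $h$ is exactly $g = \|\Delta Q\|$ and $h(\|\Delta Q\|) = \frac{D-2}{D}\|\Delta Q\|^2 = \mathcal E[Q]$; thus the bound is saturated at $Q$. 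Raising the inequality $\mathcal E[u(t)] \ge h(g) \ge 0$ to the power $s_c$ and multiplying by $m^{2(2-s_c)}$, scale invariance makes every explicit occurrence of $m$ cancel and yields a clean scalar inequality
$$\mathcal M\mathcal E[u(t)] \ge f\big(\mathcal M\mathcal G[u(t)]\big),$$
where $f$ is continuous, $f(0)=0$, $f$ is strictly increasing on $[0,1]$, and $f(1)=1$ precisely because of the saturation at $Q$.

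Finally I would run the trapping argument. The map $t\mapsto \mathcal M\mathcal G[u(t)]$ is continuous since $u\in C(\R,H^2)$, it starts below $1$, and $f(\mathcal M\mathcal G[u(t)]) \le \mathcal M\mathcal E[u_0] < 1 = f(1)$; because $f$ strictly increases up to its value $1$ at the point $1$, the orbit can never reach $\mathcal M\mathcal G = 1$, so $\mathcal M\mathcal G[u(t)] < 1$ for all $t$. Quantifying this with the strict gap $\mathcal M\mathcal E[u_0] \le 1-2\eta$ and the monotonicity of $f$ near $1$ produces a uniform $\delta>0$ with $\sup_t \mathcal M\mathcal G[u(t)] \le 1-\delta$, which is the continuity bootstrap already isolated in Lemma \ref{abs}. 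I expect the main obstacle to be the bookkeeping that turns the one-variable energy bound into the sharp scale-invariant inequality with $f(1)=1$, in particular verifying that the Gagliardo--Nirenberg constant and the two Pohozaev identities match so that the barrier sits exactly at the ground state level, together with the minor point of checking that $\mathcal E[u_0]$ is positive, which is forced by $\mathcal M\mathcal G[u_0]<1$ and legitimizes the fractional power $(\cdot)^{s_c}$.
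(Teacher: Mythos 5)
Your proposal is correct and takes essentially the same route as the paper: sharp Gagliardo--Nirenberg (Proposition \ref{gag2}) with its constant expressed through the ground state, the Pohozaev identities $\mathcal E[Q]=\frac{D-2}{D}\|\Delta Q\|^2=\frac{D-2}{E}\|Q\|^2$, conservation of mass and energy, and a continuity/trapping argument based on the barrier $f(x)=\frac{D}{D-2}x^2-\frac{2}{D-2}x^D$, which is increasing on $[0,1]$ with $f(1)=1$. The only cosmetic differences are that you phrase the scalar inequality directly for $\mathcal M\mathcal E$ and $\mathcal M\mathcal G$ (raising to the power $s_c$, legitimate since energy is nonnegative in the trapped region) while the paper works with their $1/s_c$-th powers, and you package the final bootstrap via Lemma \ref{abs} instead of the paper's direct monotonicity argument on $f$.
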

\begin{proof}
Denote $C_{N,q,b}:=C(N,q,b)$ given by Proposition \ref{gag2}. The inequality $\mathcal M\mathcal E(u_0)<1$ gives the existence of $\delta>0$ such that 
\begin{eqnarray*}
1-\delta
&>&\frac{M(u_0)^{\frac{2-s_c}{s_c}}E(u_0)}{M(\phi)^{\frac{2-s_c}{s_c}}E(Q)}\\
&>&\frac{M(u_0)^{\frac{2-s_c}{s_c}}}{M(Q)^{\frac{2-s_c}{s_c}}E(Q)}\Big(\|\Delta u(t)\|^2-\frac1q\int_{\R^N}|x|^{2b}|u|^{2q}\,dx\Big)\\
&>&\frac{M(u_0)^{\frac{2-s_c}{s_c}}}{M(Q)^{\frac{2-s_c}{s_c}}E(Q)}\Big(\|\Delta u(t)\|^2-\frac{C_{N,p,b}}q\|u\|^E\|\Delta u(t)\|^D\Big).
\end{eqnarray*}
Thanks to Pohozaev identities, one has
$$E(Q)=\frac{D-2}D\|\Delta Q\|^2=\frac{D-2}E\|Q\|^2.$$
Thus,
{\small\begin{eqnarray*}
1-\delta
&>&\frac D{D-2}\frac{M(u_0)^{\frac{2-s_c}{s_c}}}{M(Q)^{\frac{2-s_c}{s_c}}\|\Delta Q\|^2}\Big(\|\Delta u(t)\|^2-\frac{C_{N,q,b}}p\|u\|^E\|\Delta u(t)\|^D\Big)\\
&>&\frac D{D-2}\frac{M(u_0)^{\frac{2-s_c}{s_c}}\|\Delta u(t)\|^2}{M(Q)^{\frac{2-s_c}{s_c}}\|\Delta Q\|^2}-\frac D{D-2}\frac{M(u_0)^{\frac{2-s_c}{s_c}}}{M(Q)^{\frac{2-s_c}{s_c}}\|\Delta Q\|^2}\frac{C_{N,q,b}}q\|u\|^E\|\Delta u(t)\|^D\\
&>&\frac D{D-2}\frac{M(u_0)^{\frac{2-s_c}{s_c}}\|\Delta u(t)\|^2}{M(Q)^{\frac{2-s_c}{s_c}}\|\Delta Q\|^2}-\frac D{D-2}\frac{M(u_0)^{\frac{2-s_c}{s_c}}}{M(Q)^{\frac{2-s_c}{s_c}}\|\Delta Q\|^2}\frac2E(\frac ED)^\frac{D}2\|Q\|^{-2(q-1)}\|u\|^E\|\Delta u(t)\|^D\\
&>&\frac D{D-2}\frac{M(u_0)^{\frac{2-s_c}{s_c}}\|\Delta u(t)\|^2}{M(Q)^{\frac{2-s_c}{s_c}}\|\Delta Q\|^2}-\frac D{D-2}\frac2E\frac{M(u_0)^{\frac{2-s_c}{s_c}}}{M(Q)^{\frac{2-s_c}{s_c}}\|\Delta Q\|^2}(\frac{\|Q\|}{\|\Delta Q\|})^D\|Q\|^{-2(q-1)}\|u\|^E\|\Delta u(t)\|^D\\
&>&\frac D{D-2}\frac{M(u_0)^{\frac{2-s_c}{s_c}}\|\Delta u(t)\|^2}{M(Q)^{\frac{2-s_c}{s_c}}\|\Delta Q\|^2}-\frac D{D-2}\frac2E\frac{\|u_0\|^{E+2\frac{1-s_c}{s_c}}}{M(Q)^{\frac{2-s_c}{s_c}}\|\Delta Q\|^2}(\frac{\|Q\|}{\|\Delta Q\|})^D\|Q\|^{-2(q-1)}\|\Delta u(t)\|^D.
\end{eqnarray*}}
Using the equalities $s_c=\frac{D-2}{q-1}$ and $\frac DE=(\frac{\|\Delta Q\|}{\|Q\|})^2$, one has
\begin{eqnarray*}
1-\delta
&>&\frac D{D-2}\frac{M(u_0)^{\frac{2-s_c}{s_c}}\|\Delta u(t)\|^2}{M(Q)^{\frac{2-s_c}{s_c}}\|\Delta Q\|^2}-\frac D{D-2}\frac2E\frac{(\|u_0\|^{\frac{2-s_c}{s_c}}\|\Delta u(t)\|)^D}{M(\phi)^{\frac{2-s_c}{s_c}}\|\Delta Q\|^2}(\frac{\|Q\|}{\|\Delta Q\|})^{D}\|Q\|^{-2(q-1)}\\
&>&\frac D{D-2}\frac{M(u_0)^{\frac{2-s_c}{s_c}}\|\Delta u(t)\|^2}{M(Q)^{\frac{2-s_c}{s_c}}\|\Delta Q\|^2}-\frac2{D-2}\frac{(\|u_0\|^{\frac{2-s_c}{s_c}}\|\Delta u(t)\|)^D}{\| Q\|^{2\frac{2-s_c}{s_c}-D+2q}\|\Delta Q\|^{D}}\\
&>&\frac D{D-2}\Big(\frac{\|u_0\|^{\frac{2-s_c}{s_c}}\|\Delta u(t)\|}{\|Q\|^{\frac{2-s_c}{s_c}}\|\Delta Q\|}\Big)^2-\frac2{D-2}\Big(\frac{\|u_0\|^{\frac{2-s_c}{s_c}}\|\Delta u(t)\|}{\|Q\|^{\frac{2-s_c}{s_c}}\|\Delta Q\|}\Big)^D.
\end{eqnarray*}
Take the real function defined on $[0,1]$ by $f(x):=\frac D{D-2}x^2-\frac2{D-2}x^D$, with first derivative $f'(x)=\frac{2D}{D-2}x(1-x^{D-2})$. Thus, with the table change of $f$ and the continuity of 
$t\to X(t):=\frac{\|u_0\|^{\frac{2-s_c}{s_c}}\|\Delta u(t)\|}{\|\phi\|^{\frac{2-s_c}{s_c}}\|\Delta Q\|}$, it follows that $X(t)<1$ for any $t<T^*$. Thus, $T^*=\infty$ and there exists $\epsilon>0$ near to zero such that $X(t)\in f^{-1}([0,1-\delta])=[0,1-\epsilon]$. This finishes the proof.
\end{proof}
Let us prove a coercivity estimate on centered balls with large radials.
\begin{lem}\label{crcv1}
There exists $R_0:=R_0(\delta,M(u),Q)>0$ such that for any $R>R_0$,
$$\sup_{t\in\R}\|\psi_R u(t)\|^{2-s_c}\|\Delta(\psi_Ru(t))\|^{s_c}<(1-\delta)\|\phi\|^{2-s_c}\|\Delta\phi\|^{s_c}.$$
In particular, there exists $\delta'>0$ such that
$$\|\Delta(\psi_Ru)\|^2-\frac D{2q}\int_{\R^N}|x|^{2b}|\psi_Ru|^{2q}\,dx\geq\delta'\int_{\R^N}|x|^{2b}|\psi_R u|^{2q}\,dx.$$ 
\end{lem}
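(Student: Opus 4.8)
The plan is to obtain both assertions from the uniform sub-threshold bound of Lemma~\ref{bnd0}, exploiting the fact that the cut-off $\psi_R$ alters the scale-invariant quantities only by errors that vanish as $R\to\infty$. Write $Q$ for the ground state of \eqref{S2} (denoted $\phi$ in the statement). From Lemma~\ref{bnd0} the solution is global, satisfies $M_0:=\sup_{t\in\R}\|u(t)\|_{H^2}<\infty$ (combine the uniform $\dot H^2$ bound with mass conservation), and obeys the strict gap
$$\sup_{t\in\R}\|u(t)\|^{2-s_c}\|\Delta u(t)\|^{s_c}<(1-\delta)\|Q\|^{2-s_c}\|\Delta Q\|^{s_c}.$$

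For the first inequality, I would first note that $0\le\psi_R\le1$ gives $|\psi_Ru|\le|u|$ pointwise, hence $\|\psi_Ru(t)\|\le\|u(t)\|=\|u_0\|$ by mass conservation, so the mass factor never increases. The only real work is the Laplacian factor: expanding $\Delta(\psi_Ru)=\psi_R\Delta u+2\nabla\psi_R\cdot\nabla u+u\,\Delta\psi_R$ and using $\|\nabla\psi_R\|_\infty\lesssim R^{-1}$, $\|\Delta\psi_R\|_\infty\lesssim R^{-2}$ together with $\|\nabla u\|\lesssim\|u\|^{1/2}\|\Delta u\|^{1/2}\le M_0$ gives the uniform bound $\|\Delta(\psi_Ru(t))\|\le\|\Delta u(t)\|+CM_0R^{-1}$. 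Since $0<s_c<1$ the map $x\mapsto x^{s_c}$ is subadditive, so after multiplying by the bounded mass factor one gets
$$\|\psi_Ru(t)\|^{2-s_c}\|\Delta(\psi_Ru(t))\|^{s_c}\le\|u(t)\|^{2-s_c}\|\Delta u(t)\|^{s_c}+C'R^{-s_c}$$
uniformly in $t$. Comparing with the strict gap above and choosing $R_0$ so large that $C'R_0^{-s_c}$ falls below that gap yields the claim.

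For the coercivity estimate I would apply Proposition~\ref{gag2} to $v:=\psi_Ru$, giving $\frac{D}{2q}\int_{\R^N}|x|^{2b}|v|^{2q}\,dx\le\frac{D}{2q}C(N,q,b)\|v\|^E\|\Delta v\|^D$. The algebraic identities $E=(2-s_c)(q-1)$ and $D-2=s_c(q-1)$ let me recast $\|v\|^E\|\Delta v\|^{D-2}=\big(\|v\|^{2-s_c}\|\Delta v\|^{s_c}\big)^{q-1}$, while the facts that $Q$ saturates Proposition~\ref{gag2} and satisfies $\mathcal K[Q]=0$ force the normalization $\frac{D}{2q}C(N,q,b)\big(\|Q\|^{2-s_c}\|\Delta Q\|^{s_c}\big)^{q-1}=1$. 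Substituting the first estimate $\|v\|^{2-s_c}\|\Delta v\|^{s_c}<(1-\delta)\|Q\|^{2-s_c}\|\Delta Q\|^{s_c}$ then gives $\frac{D}{2q}\int_{\R^N}|x|^{2b}|v|^{2q}\,dx\le(1-\delta)^{q-1}\|\Delta v\|^2$, hence
$$\|\Delta v\|^2-\frac{D}{2q}\int_{\R^N}|x|^{2b}|v|^{2q}\,dx\ge\big((1-\delta)^{-(q-1)}-1\big)\frac{D}{2q}\int_{\R^N}|x|^{2b}|v|^{2q}\,dx,$$
which is the desired bound with $\delta':=\frac{D}{2q}\big((1-\delta)^{-(q-1)}-1\big)>0$ since $q>1$.

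The hard part is the first step. The delicate issue is not the pointwise algebra but the \emph{uniformity in time}: I must control $\|\Delta(\psi_Ru(t))\|$ by $\|\Delta u(t)\|$ up to an error independent of $t$, which is exactly why mass conservation and the uniform $\dot H^2$ bound from Lemma~\ref{bnd0} are indispensable. It is also essential that the resulting error is only $O(R^{-s_c})$ rather than $O(R^{-1})$ — a consequence of the concavity exponent $s_c\in(0,1)$ — since this is what allows a single threshold radius $R_0$, valid for all $t$, to absorb the perturbation into the gap $\delta$ (shrinking $\delta$ by the harmless absorbed amount if necessary). The remainder is routine bookkeeping with the exponent identities and the ground-state normalization.
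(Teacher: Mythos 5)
Your proof is correct and follows essentially the same route as the paper: the cut-off estimate rests on the commutator bound $\|\Delta(\psi_R u)\|\le\|\Delta u\|+O(R^{-1})$ uniformly in $t$ (which the paper cites from \cite{vdd} as \eqref{vd} and you instead prove directly by the product rule), and the coercivity follows from the sharp Gagliardo--Nirenberg inequality of Proposition \ref{gag2} combined with the normalization forced by $\mathcal K[Q]=0$ and the sub-threshold bound of Lemma \ref{bnd0}. If anything, your algebra in the coercivity step is cleaner than the paper's (whose displayed computation contains harmless constant slips), so there is no gap to report.
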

\begin{proof}
Taking account of Proposition \ref{gag2}, one gets
\begin{eqnarray*}
E(u)
&=&\|\Delta u\|^2-\frac1q\int_{\R^N}|x|^{2b}|u|^{2q}\,dx\\
&\geq&\|\Delta u\|^2\Big(1-\frac{C_{N,q,b}}p\|u\|^E\|\Delta u\|^{D-2}\Big)\\
&\geq&\|\Delta u\|^2\Big(1-\frac{C_{N,q,b}}q[\|u\|^{2-s_c}\|\Delta u\|^{s_c}]^{q-1}\Big).
\end{eqnarray*}
So, with the previous Lemma
\begin{eqnarray*}
E(u)
&\geq&\|\Delta u\|^2\Big(1-(1-\delta)\frac2E(\frac ED)^{\frac D2}\|Q\|^{-2(q-1)}[\|Q\|^{2-s_c}\|\Delta Q\|^{s_c}]^{q-1}\Big)\\
&\geq&\|\Delta u\|^2\Big(1-(1-\delta)\frac2E(\frac ED)^{\frac D2}[\frac{\|\Delta Q\|}{\|Q\|}]^{s_c(q-1)}\Big)\\
&\geq&\|\Delta u\|^2\Big(1-(1-\delta)\frac2E(\frac{\|Q\|}{\|\Delta Q\|})^{D-2}[\frac{\|\Delta Q\|}{\|Q\|}]^{D-2}\Big)\\
&\geq&\|\Delta u\|^2\Big(1-(1-\delta)\frac2E\Big).
\end{eqnarray*}
Thus, using Sobolev injections with the fact that $q<q^*$, one gets
$$\|\Delta u\|^2-\frac D{2q}\int_{\R^N}|x|^{2b}|u|^{2q}\,dx\geq\delta\|\Delta u\|^2\geq\delta'\int_{\R^N}|x|^{2b}|u|^{2q}\,dx.$$ 
This gives the second part of the claimed Lemma provided that the first point is proved. A direct computation \cite{vdd} gives
\begin{equation}\label{vd}
\|\Delta(\psi_R u)\|^2-\|\psi_R\Delta u\|^2\leq C(u_0,\phi)R^{-2}.
\end{equation}
Then, one gets the proof of the first point and so the Lemma.
\end{proof}
\subsection{Morawetz identity}
This subsection is devoted to prove a classical Morawetz estimate satisfied by the energy global solutions to the inhomogeneous Schr\"odinger problem \eqref{S2}. One adopts the convention that repeated indexes are summed. Also, if $f,g$ are two differentiable functions, one defines the momentum brackets by
$$ \{f,g\}_p:=\Re(f\nabla\bar g-g\nabla\bar f).$$
Let us start with an auxiliary result.
\begin{lem}\label{mrw1}
Take $N\geq3$, $\max\{-\frac N2,-4\}<2b<0$, $\max\{1,1+\frac{2+2b}N\}<q<q^*$ and $u\in C_{T}(H^2)$ be a local solution to \eqref{S2}. Let $a:\R^N\to\R$ be a convex smooth function  and the real function defined on $[0,T)$, by
$$M:t\to2\int_{\R^N}\nabla a(x)\Im(\nabla u(t,x)\bar u(t,x))\,dx.$$
Then, the following equalities hold on $[0,T)$,
\begin{eqnarray*}
M'
&=&2\int_{\R^N}\Big(2\partial_{jk}\Delta a\partial_ju\partial_k\bar u-\frac12(\Delta^3a)|u|^2-4\partial_{jk}a\partial_{ik}u\partial_{ij}\bar u\\
&+&\Delta^2a|\nabla u|^2+\partial_ja\{|x|^{2b}|u|^{2(q-1)}u,u\}_p^j\Big)\,dx\\
&=&2\int_{\R^N}\Big(2\partial_{jk}\Delta a\partial_ju\partial_k\bar u-\frac12(\Delta^3a)|u|^2-4\partial_{jk}a\partial_{ik}u\partial_{ij}\bar u\\
&+&\Delta^2a|\nabla u|^2+\frac{q-1}{q}(\Delta a)|x|^{2b}|u|^{2q}-\frac1q\nabla a\nabla(|x|^{2b})|u|^{2q}\Big)\,dx.
\end{eqnarray*}
\end{lem}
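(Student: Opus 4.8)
The plan is to obtain the identity by differentiating $M$ under the integral sign and then eliminating the time derivatives by means of the equation \eqref{S2}. Writing the momentum density as $p_j:=\Im(\partial_j u\,\bar u)$, so that $M=2\int_{\R^N}\partial_j a\,p_j\,dx$ (summation over $j$), I would compute $M'=2\int_{\R^N}\partial_j a\,\partial_t p_j\,dx$ and insert $\dot u=i\Delta^2u-iN(u)$, with $N(u):=|x|^{2b}|u|^{2(q-1)}u$, together with its conjugate. Using the elementary relations $\Im(iz)=\Re(z)$ and $\Re(\bar w)=\Re(w)$, the integrand separates into a dispersive part carried by $\Delta^2u$ and a nonlinear part carried by $N(u)$.

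For the nonlinear part I would first observe that its integrand equals $\partial_j a\,\Re(\partial_j u\,\overline{N(u)}-\bar u\,\partial_j N(u))$; conjugating inside the real part identifies this with $\partial_j a\,\{N(u),u\}_p^j$, which is the last term of the first displayed equality. To pass to the second equality, set $G:=|x|^{2b}|u|^{2(q-1)}$, a real function, so that $N(u)=Gu$; the Leibniz rule gives $\{N(u),u\}_p^j=-(\partial_j G)|u|^2$ and hence a nonlinear contribution $-\int_{\R^N}\nabla a\cdot\nabla G\,|u|^2\,dx$. Writing $(\nabla G)|u|^2=\nabla(|x|^{2b}|u|^{2q})-G\,\nabla(|u|^2)$ and using $|u|^{2(q-1)}\nabla(|u|^2)=\tfrac1q\nabla(|u|^{2q})$, two integrations by parts redistribute the derivatives onto $a$ and $|x|^{2b}$ and produce exactly $\tfrac{q-1}{q}(\Delta a)|x|^{2b}|u|^{2q}-\tfrac1q\nabla a\cdot\nabla(|x|^{2b})|u|^{2q}$.

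The dispersive part is the heart of the lemma. After substitution it reads $2\int_{\R^N}\partial_j a\,\Re(\bar u\,\partial_j\Delta^2u-\partial_j\bar u\,\Delta^2u)\,dx$, where I have used $\Re(\partial_j u\,\Delta^2\bar u)=\Re(\partial_j\bar u\,\Delta^2u)$. A single integration by parts in $\partial_j$ symmetrizes this into $-\Re\int_{\R^N}(\Delta a\,\bar u+2\partial_j a\,\partial_j\bar u)\,\Delta^2u\,dx$. I would then write $\Delta^2=\Delta\Delta$ and integrate by parts repeatedly, each time transferring one Laplacian onto the weight; the derivatives landing on $a$ assemble into the tensors $\partial_{jk}a$, $\Delta^2a$, $\partial_{jk}\Delta a$ and $\Delta^3a$, while those landing on $u,\bar u$ organize into $\partial_j u\,\partial_k\bar u$, $\partial_{ik}u\,\partial_{ij}\bar u$, $|\nabla u|^2$ and $|u|^2$. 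The main obstacle is entirely bookkeeping: one must perform several integrations by parts on the bi-Laplacian, control the combinatorial constants, and verify the cancellations — in particular that all the terms proportional to $|\Delta u|^2$ cancel (a short computation shows the $|\Delta u|^2$ produced by the $\Delta a\,\bar u\,\Delta^2u$ piece is exactly annihilated by the one produced by $2\partial_j a\,\partial_j\bar u\,\Delta^2u$) — so that only the four stated terms survive.

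Finally, I would address the rigor of these manipulations. Since the identity involves up to four derivatives of $u$ while the solution is only in $H^2$, all the integrations by parts are first justified for smooth, rapidly decaying solutions, where the weighted integrals converge and the boundary terms at infinity vanish, and the general case $u\in C_T(H^2)$ then follows by approximating the data and using continuous dependence together with the local theory recalled above; the lower bound on $q$ and the range $\max\{-\frac N2,-4\}<2b<0$ ensure the convergence of the weighted nonlinear integrals near the origin and the differentiability of $N(u)$, which legitimizes the bracket computation. I would also note that convexity of $a$ plays no role in the identity itself; only the smoothness of $a$ and suitable boundedness of its derivatives are used here, while convexity is reserved for the sign information exploited in the subsequent Morawetz estimate.
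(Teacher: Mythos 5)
Your proposal is correct and follows essentially the same route as the paper's proof: differentiate $M$, use the equation to split $M'$ into a dispersive part and a nonlinear bracket part, rewrite the bracket via the identity $\{|x|^{2b}|u|^{2(q-1)}u,u\}_p=-\frac{q-1}{q}\nabla\big(|x|^{2b}|u|^{2q}\big)-\frac1q\nabla\big(|x|^{2b}\big)|u|^{2q}$ (your decomposition through $G=|x|^{2b}|u|^{2(q-1)}$ is exactly this identity), and integrate by parts on the bi-Laplacian term. The only difference is that the paper outsources the dispersive bookkeeping to Proposition 3.1 of an external reference, whereas you sketch it directly, and your key structural claim checks out: the $|\Delta u|^2$ contributions of $-2\int_{\R^N}\Delta a\,\Re(\bar u\Delta^2u)\,dx$ and $-4\int_{\R^N}\Re(\partial_ka\,\partial_k\bar u\,\Delta^2u)\,dx$ cancel exactly, which is what allows the computation to close on the four stated terms.
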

\begin{proof}
Denote the source term $\mathcal N:=-|x|^{2b}|u|^{2(q-1)}u$ and compute
\begin{eqnarray*}
\partial_t\Im(\partial_k u\bar u)
&=&\Im(\partial_k\dot u\bar u)+\Im(\partial_k u\bar{\dot u})\\
&=&\Re(i\dot u\partial_k\bar u)-\Re(i\partial_k \dot u\bar{u})\\
&=&\Re(\partial_k\bar u(-\Delta^2 u-\mathcal N))-\Re(\bar u\partial_k(-\Delta^2 u-\mathcal N))\\
&=&\Re(\bar u\partial_k\Delta^2 u-\partial_k\bar u\Delta^2 u)+\Re(\bar u\partial_k\mathcal N-\partial_k\bar u\mathcal N).
\end{eqnarray*}
Thus,
\begin{eqnarray*}
M'
&=&2\int_{\R^N}\partial_ka\Re(\bar u\partial_k\Delta^2 u-\partial_k\bar u\Delta^2 u)\,dx-2\int_{\R^N}\partial_ka\{\mathcal N,u\}_p^k\,dx\\
&=&-2\int_{\R^N}\Delta a\Re(\bar u\Delta^2 u)\,dx-4\int_{\R^N}\Re(\partial_ka\partial_k\bar u\Delta^2 u)\,dx+2\int_{\R^N}\partial_ka\{|x|^{2b}|u|^{2(q-1)}u,u\}_p^k\,dx\\
\end{eqnarray*}
The first equality in the above Lemma follows as in Proposition 3.1 in \cite{mwz}. For the second equality, it is sufficient to use the identity
$$\{|x|^{2b}|u|^{2(q-1)}u,u\}_p=-\frac{q-1}q\nabla(|x|^{2b}|u|^{2q})-\frac1q\nabla(|x|^{2b})|u|^{2q}.$$
\end{proof}
The main result of this subsection is the following.
\begin{lem}\label{bnd1}
Take $N\geq3$, $\max\{-4,-\frac N2\}<2b<0$ and $q_*<q<q^*$. Let $u_0\in H^2_{rd}$ satisfying 
$$\max\{\mathcal M\mathcal E(u_0),{\mathcal M\mathcal G}(u_0)\}<1.$$
Then, for any $T>0$, one has
$$\int_0^T|x|^{2b}|u(t)|^{2q}\,dt\leq CT^{\frac13}.$$
\end{lem}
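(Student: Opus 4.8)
The plan is to run a truncated virial (Morawetz) argument of Dodson--Murphy type, testing the identity of Lemma \ref{mrw1} against a radial convex weight that coincides with the virial weight $|x|^2$ on a large ball and whose gradient stays bounded at infinity. Concretely, I would fix $R>0$ (to be optimized in terms of $T$ only at the very end) and choose a radial $a=a_R\in C^\infty$ with $a_R(x)=|x|^2$ on $\{|x|\le R\}$, satisfying $\|\nabla a_R\|_\infty\lesssim R$, $\|\nabla^2 a_R\|_\infty\lesssim 1$, and such that all derivatives of order $\ge3$ are supported in the annulus $\{R\le|x|\le 2R\}$ with $|\partial^k a_R|\lesssim R^{2-k}$. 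With $M(t):=2\int_{\R^N}\nabla a_R\cdot\Im(\nabla u\bar u)\,dx$ as in Lemma \ref{mrw1}, the uniform bound $\sup_t\|u(t)\|_{H^2}\lesssim1$ coming from Lemma \ref{bnd0} (mass is conserved and $\|\Delta u\|$ is controlled through $\mathcal M\mathcal G$) together with Cauchy--Schwarz give the a priori control
\begin{equation}\label{plan-Mbound}
\sup_{t\in\R}|M(t)|\le 2\|\nabla a_R\|_\infty\|\nabla u\|\,\|u\|\lesssim R.
\end{equation}

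Next I would compute $-M'(t)$ from Lemma \ref{mrw1}. On the ball, where $a_R=|x|^2$, all derivatives of order $\ge3$ vanish, $\partial_{jk}a_R=2\delta_{jk}$ and $\Delta a_R=2N$; recalling $D=\tfrac{N(q-1)-2b}{2}$ and using $\nabla(|x|^{2b})=2b|x|^{2b-2}x$, the bulk contribution collapses exactly to $16\big(\|\Delta u\|^2-\tfrac{D}{2q}\int|x|^{2b}|u|^{2q}\,dx\big)=16\,\mathcal K[u]$. I would then trade $u$ for $\psi_R u$ up to commutators, paying the price $\|\Delta(\psi_R u)\|^2-\|\psi_R\Delta u\|^2\lesssim R^{-2}$ from \eqref{vd}, so that
\begin{equation}\label{plan-Mprime}
-M'(t)\ge 16\Big(\|\Delta(\psi_R u)\|^2-\tfrac{D}{2q}\int_{\R^N}|x|^{2b}|\psi_R u|^{2q}\,dx\Big)-C R^{-2}.
\end{equation}
The coercivity of Lemma \ref{crcv1} converts the bracket into $\delta'\int|x|^{2b}|\psi_R u|^{2q}\,dx$; since $\psi_R\equiv1$ on $\{|x|\le R/2\}$, and since the radial Strauss decay $|u(x)|\lesssim|x|^{-(N-1)/2}$ bounds the exterior tail $\int_{|x|>R/2}|x|^{2b}|u|^{2q}\,dx$ by a negative power of $R$ which is $\le R^{-2}$ under the standing hypotheses $N\ge5$, $q>q_*$, this yields the differential inequality
\begin{equation}\label{plan-diff}
-M'(t)\ge c\int_{\R^N}|x|^{2b}|u(t)|^{2q}\,dx - CR^{-2},\qquad t\in[0,T].
\end{equation}

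Integrating \eqref{plan-diff} on $[0,T]$ and inserting \eqref{plan-Mbound},
\begin{equation}\label{plan-integrated}
c\int_0^T\int_{\R^N}|x|^{2b}|u|^{2q}\,dx\,dt\le M(0)-M(T)+CTR^{-2}\le C\big(R+TR^{-2}\big),
\end{equation}
and choosing $R=T^{1/3}$ balances the two terms, giving the claimed bound $\int_0^T\int|x|^{2b}|u|^{2q}\lesssim T^{1/3}$. I expect the decisive obstacle to be the rigorous justification of \eqref{plan-Mprime}--\eqref{plan-diff}: one must show that every localization error generated by replacing $|x|^2$ with the truncation $a_R$ — the fourth-order pieces $\partial_{jk}a_R\,\partial_{ik}u\,\partial_{ij}\bar u$, $\Delta^2 a_R\,|\nabla u|^2$, $\partial_{jk}\Delta a_R\,\partial_j u\,\partial_k\bar u$ and $\Delta^3 a_R\,|u|^2$ over the annulus, together with the far-field nonlinear and inhomogeneous contributions — is genuinely $O(R^{-2})$. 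This is precisely where the radial assumption is indispensable (Strauss decay in the exterior) and where the commutator estimate \eqref{vd} from \cite{vdd} does the essential work; the sign bookkeeping in Lemma \ref{mrw1} must also be tracked with care so that the constraint $\mathcal K$ emerges with the correct sign.
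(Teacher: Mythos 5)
Your proposal is correct and takes essentially the same route as the paper's own proof: a truncated virial weight agreeing with the quadratic one on a large ball, the identity of Lemma \ref{mrw1} with the bulk collapsing to the (localized) constraint $\mathcal K$, the a priori bound $\sup_t|M(t)|\lesssim R$, coercivity from Lemma \ref{crcv1} combined with \eqref{vd}, radial Strauss decay to make the annulus and far-field nonlinear contributions negligible, and the final choice $R=T^{1/3}$. The only differences are normalizations — the paper uses $f_R=R^2f(|\cdot|/R)$, which equals $|x|^2/2$ on $\{|x|<R/2\}$ with transition region $\{R/2<|x|<R\}$ — so constants change but the argument is the same.
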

\begin{proof}
Take a smooth real function such that $0\leq f''\leq1$ and
$$f:r\to\left\{
\begin{array}{ll}
\frac{r^2}2,\,\,\mbox{if}\,\, 0\leq r\leq\frac12;\\
1,\,\,\mbox{if}\,\,  r\geq1.
\end{array}
\right.
$$
Moreover, for $R>0$, let the smooth radial function defined on $\R^N$ by $f_R:=R^2f(\frac{|\cdot|}R)$. One can check that
$$0\leq f_R''\leq1,\quad f'(r)\leq r,\quad N\geq\Delta f_R.$$
Let the real function
$$M_R:t\to2\int_{\R^N}\nabla f_R(x)\Im(\nabla u(t,x)\bar u(t,x))\,dx.$$
By Morawetz estimate in Lemma \ref{mrw1}, one has
\begin{eqnarray*}
M_R'
&=&2\int_{\R^N}\Big(2\partial_{jk}\Delta f_R\partial_ju\partial_k\bar u-\frac12(\Delta^3f_R)|u|^2+\Delta^2f_R|\nabla u|^2-\frac1q\nabla f_R\nabla(|x|^{2b})|u|^{2q}\\
&+&\frac{q-1}{q}(\Delta f_R)|x|^{2b}|u|^{2q}-4\partial_{jk}f_R\partial_{ik}u\partial_{ij}\bar u\Big)\,dx\\
&=&2\int_{\R^N}\Big(2\partial_{jk}\Delta f_R\partial_ju\partial_k\bar u-\frac12(\Delta^3f_R)|u|^2+\Delta^2f_R|\nabla u|^2-\frac1q\nabla f_R\nabla(|x|^{2b})|u|^{2q}\\&+&2\Big(N\frac{q-1}q\int_{\{|x|<\frac R2\}}|x|^{2b}|u|^{2q}\,dx-4\int_{\{|x|<\frac R2\}}|\Delta u|^2\,dx-\frac{2b}q\int_{\{|x|<\frac R2\}}|x|^2b|u|^{2q}\,dx\Big)\\
&+&2\int_{\{\frac R2<|x|<R\}}\Big(\frac{q-1}q\Delta f_R|x|^{2b}|u|^{2q}-4\partial_{jk}f_R\partial_{ik}u\partial_{ij}\bar u\,dx-\frac1q\nabla f_R\nabla(|x|^{2b})|u|^{2q}\Big)\,dx\\
&=&2\int_{\R^N}\Big(2\partial_{jk}\Delta f_R\partial_ju\partial_k\bar u-\frac12(\Delta^3f_R)|u|^2+\Delta^2f_R|\nabla u|^2-\frac1q\nabla f_R\nabla(|x|^{2b})|u|^{2q}\\&+&2\Big(\frac{2D}q\int_{\{|x|<\frac R2\}}|x|^{2b}|u|^{2q}\,dx-4\int_{\{|x|<\frac R2\}}|\Delta u|^2\,dx\Big)\\
&+&2\int_{\{\frac R2<|x|<R\}}\Big(\frac{q-1}q\Delta f_R|x|^{2b}|u|^{2q}-4\partial_{jk}f_R\partial_{ik}u\partial_{ij}\bar u\,dx-\frac1q\nabla f_R\nabla(|x|^{2b})|u|^{2q}\Big)\,dx.
\end{eqnarray*}
Using the estimate $\||\nabla|^kf_R\|_\infty\lesssim R^{2-k}$, one has
\begin{gather*}
|\int_{\R^N}\partial_{jk}\Delta f_R\partial_ju\partial_k\bar u\,dx|\lesssim R^{-2};\\
|\int_{\R^N}(\Delta^3f_R)|u|^2\,dx|\lesssim R^{-4};\\
|\int_{\R^N}\Delta^2f_R|\nabla u|^2\,dx|\lesssim R^{-2}.
\end{gather*}
Moreover, by the radial setting, one writes
$$\int_{\{\frac R2<|x|<R\}}\partial_{jk}f_R\partial_{ik}u\partial_{ij}\bar u\,dx\geq (N-1)\int_{\{\frac R2<|x|<R\}}\frac{f'_R(r)}{r^3}|\partial_ru|^2\,dx=\mathcal O(R^{-2}).$$
Now, by Hardy-Littlewood-Sobolev and Strauss inequalities
\begin{eqnarray*}
|\int_{\{\frac R2<|x|<R\}}\Delta f_R|x|^{2b}|u|^{2q}\,dx|
&\lesssim&\int_{\{\frac R2<|x|<R\}}|x|^{2b}|u|^{2(q-1)}|u|^2\,dx\\
&\lesssim&\int_{\{\frac R2<|x|<R\}}|x|^{2b-(N-1)(q-1)}|u|^2\,dx\\
&\lesssim&R^{2b-(N-1)(q-1)}\|u\|^2\\
&\lesssim&R^{-((N-1)(q-1)-2b)}.
\end{eqnarray*}
With the same way, one has
\begin{eqnarray*}
|\int_{\{\frac R2<|x|<R\}}\nabla f_R\nabla(|x|^{2b})|u|^{2q}\,dx|
&\lesssim&R^{-((N-1)(q-1)-2b)}.
\end{eqnarray*}
Thus, since $\|\nabla u\|^2\lesssim \|\Delta u\|\lesssim 1$ and $q>q^*$, one gets
\begin{eqnarray*}
M_R'
&\leq&4\Big(\frac Dq\int_{\{|x|<\frac R2\}}|x|^{2b}|u|^{2q}\,dx-2\int_{\{|x|<\frac R2\}}|\Delta u|^2\,dx\Big)+\mathcal O(R^{-2}).
\end{eqnarray*}
So, with Lemma \ref{crcv1}, via \eqref{vd}, one gets
\begin{eqnarray*}
\sup_{[0,T]}|M_R|
&\geq&8\int_0^T\Big(\int_{\{|x|<\frac R2\}}|\Delta(\psi_R u)|^2\,dx-\frac{D}{2q}\int_{\{|x|<\frac R2\}}|x|^{2b}|\psi_Ru|^{2q}\,dx\Big)\,dt\\
&+&\mathcal O(R^{-2})T+\mathcal O(R^{-((N-1)(q-1)-2b)})T\\
&\geq&8\delta'\int_0^T\int_{\R^N}|x|^{2b}|\psi_Ru(t)|^{2q}\,dx\,dt+\mathcal O(R^{-2})T\\
&\geq&8\delta'\int_0^T\int_{\{|x|<\frac R2\}}|x|^{2b}|u(t)|^{2q}\,dx\,dt+\mathcal O(R^{-2})T.
\end{eqnarray*}
Thus, with previous computation
\begin{eqnarray*}
\int_0^T\int_{\{|x|<\frac R2\}}|x|^{2b}|u(t)|^{2q}\,dx\,dt
&\leq& C\Big(\sup_{[0,T]}|M|+TR^{-2})\Big)\\
&\leq& C\Big(R+TR^{-2}\Big).
\end{eqnarray*}
Taking $R=T^\frac13>>1$, one gets the requested estimate
$$\int_0^T\int_{\R^N}|x|^{2b}|u(t)|^{2q}\,dx\,dt\leq CT^{\frac13}.$$
For $0<T<<1$, the proof follows with Sobolev injections.
\end{proof}
As a consequence, one has the following energy evacuation.
\begin{lem}\label{evac}
Take $N\geq3$ and $\max\{-4,-\frac N2\}<2b<0$ and $q_*<q<q^*$. Let $u_0\in H^2_{rd}$ satisfying 
$$\max\{\mathcal M\mathcal E(u_0),{\mathcal M\mathcal G}(u_0)\}<1.$$
Then, there exists a sequence of real numbers $t_n\to\infty$ such that 
$$\lim_n\int_{\{|x|<R\}}|u(t_n,x)|^2\,dx=0,\quad\mbox{for all}\quad R>0.$$
\end{lem}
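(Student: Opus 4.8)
The plan is to read off the conclusion directly from the space-time Morawetz bound established in Lemma \ref{bnd1}, in two essentially independent steps: a mean-value argument in time that produces the sequence $(t_n)$, followed by a weighted H\"older inequality in space that converts the nonlinear weighted quantity into the local mass.

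First I would set, for $t\geq0$,
$$g(t):=\int_{\R^N}|x|^{2b}|u(t,x)|^{2q}\,dx\geq0,$$
so that Lemma \ref{bnd1} reads $\int_0^Tg(t)\,dt\leq CT^{\frac13}$ for every $T>0$. From this sublinear bound I claim that $\liminf_{t\to\infty}g(t)=0$: otherwise one would have $g(t)\geq\epsilon>0$ for all $t\geq T_0$ and some $\epsilon>0$, forcing $\int_0^Tg(t)\,dt\geq\epsilon(T-T_0)$, a linear growth that contradicts the bound $CT^{1/3}$ as $T\to\infty$. Hence there is a sequence $t_n\to\infty$ with $g(t_n)\to0$. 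The point worth stressing is that this sequence is selected using $g$ alone and is therefore independent of the radius $R$, which is precisely what the uniform-in-$R$ conclusion requires.

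Next, fixing $R>0$, I would transfer the decay of $g(t_n)$ to the local mass via the pointwise splitting $|u|^2=(|x|^{2b}|u|^{2q})^\frac1q\,|x|^{-\frac{2b}q}$ together with H\"older's inequality on $\{|x|<R\}$ with the conjugate exponents $q$ and $\frac q{q-1}$, yielding
$$\int_{\{|x|<R\}}|u(t_n)|^2\,dx\leq g(t_n)^\frac1q\Big(\int_{\{|x|<R\}}|x|^{-\frac{2b}{q-1}}\,dx\Big)^{\frac{q-1}q}.$$
The step I expect to be the only real subtlety is the finiteness of the last factor: since $b<0$ and $q>1$, the exponent $-\frac{2b}{q-1}$ is strictly positive, so the singular weight is in fact bounded near the origin and its integral over the ball is a finite constant $C(N,q,b,R)$. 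Thus $\int_{\{|x|<R\}}|u(t_n,x)|^2\,dx\leq C(N,q,b,R)\,g(t_n)^\frac1q\to0$, and because $(t_n)$ does not depend on $R$ this holds simultaneously for all $R>0$, which completes the argument.
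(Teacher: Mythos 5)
Your proof is correct and takes essentially the same route as the paper: the space-time bound of Lemma \ref{bnd1} yields, via a mean-value/pigeonhole selection, a sequence $t_n\to\infty$ along which $\int_{\R^N}|x|^{2b}|u(t_n)|^{2q}\,dx\to0$, and a H\"older estimate on $\{|x|<R\}$ (the weight being harmless on balls since $b<0$, $q>1$) converts this into decay of the local mass, with $t_n$ chosen independently of $R$ as required. If anything, your write-up is more careful than the paper's, which opens with ``Take $t_n\to\infty$'' and asserts the decay of the weighted integral by citing Lemma \ref{bnd1} without spelling out the selection argument that you make explicit.
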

\begin{proof}
Take $t_n\to\infty$. By H\"older estimate
\begin{eqnarray*}
\int_{\{|x|<R\}}|u(t_n,x)|^2\,dx
&\leq& R^{\frac N{q'}}\Big(\int_{\{|x|<R\}}|u(t_n,x)|^{2q}\,dx\Big)^\frac1q\\
&\leq& R^{\frac N{q'}-\frac{2b}q}\Big(\int_{\{|x|<R\}}|x|^{2b}|u(t_n,x)|^{2q}\,dx\Big)^\frac1q\\
\end{eqnarray*}
Indeed, by the previous Lemma 
$$\int_{\R^N}|x|^{2b}|u(t_n)|^{2q}\,dx\to0.$$
\end{proof}
\subsection{Scattering Criterion}
This section is devoted to prove the next result. 
\begin{prop}\label{crt}
Let $N\geq3$, $\max\{-4,-\frac N2\}<2b<0$. Take $\max\{q_*,x_0\}<q<q^*$ such that $q\geq\frac32$. Let $u\in C(\R,H^2_{rd})$ be a global radial solution to \eqref{S2}. Assume that 
$$0<\sup_{t\geq0}\|u(t)\|_{H^2}:=E<\infty.$$
There exist $R,\epsilon>0$ depending on $E,N,q,b$ such that if
$$\liminf_{t\to+\infty}\int_{|x|<R}|u(t,x)|^2\,dx<\epsilon,$$
then, $u$ scatters for positive time. 
\end{prop}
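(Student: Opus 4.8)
My approach follows the Dodson--Murphy scattering criterion \cite{dm}, in the spirit of \cite{tao}, transported to the biharmonic propagator $e^{it\Delta^2}$ and to the singular weight $|x|^{2b}$. First I would reduce the claim to a single smallness estimate for the linear flow issued from a well-chosen late time. By the standard scattering machinery resting on the Strichartz estimates of Proposition \ref{prop2}, together with the nonlinear bound
$$\|\,|x|^{2b}|u|^{2(q-1)}u\,\|_{{S^{s_c}}'(I)}\lesssim\|u\|_{S^{s_c}(I)}^{2q-1},$$
obtained by H\"older in time and space, Sobolev embeddings and the Gagliardo--Nirenberg inequality of Proposition \ref{gag2} to absorb the weight, it is enough to exhibit a time $T_0>0$, an $s_c$-admissible pair $(a,\rho)\in\Gamma_{s_c}$, and the small-data threshold $\eta=\eta(E,N,q,b)>0$ such that
$$\|e^{i(t-T_0)\Delta^2}u(T_0)\|_{L^a([T_0,\infty),L^\rho)}\leq\eta.$$
A continuity argument of the type of Lemma \ref{abs} then promotes this to $\|u\|_{S^{s_c}([T_0,\infty))}<\infty$, whence the existence of $u_+\in H^2$ and forward scattering.

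Fixing $R,\epsilon$ and choosing $T_0$ along the sequence furnished by the $\liminf$ hypothesis so that $\int_{|x|<R}|u(T_0)|^2\,dx<\epsilon$, I would expand by Duhamel's formula based at the origin of time,
$$e^{i(t-T_0)\Delta^2}u(T_0)=e^{it\Delta^2}u_0-i\int_0^{T_0}e^{i(t-s)\Delta^2}\big(|x|^{2b}|u|^{2(q-1)}u\big)(s)\,ds,$$
and estimate the three resulting pieces on $[T_0,\infty)$. The linear term $e^{it\Delta^2}u_0$ belongs to the global Strichartz space, so its tail is $\leq\frac\eta3$ once $T_0$ is large. Splitting the integral at $T_0-T$ for an auxiliary length $T$, the far contribution $\int_0^{T_0-T}$ obeys $t-s\geq T$ throughout; here I would invoke the dispersive bound $\|e^{it\Delta^2}f\|_\infty\lesssim|t|^{-\frac N4}\|f\|_1$ together with the uniform-in-time control $\sup_s\|\,|x|^{2b}|u(s)|^{2q-1}\,\|_1\lesssim_E 1$, finite by Proposition \ref{gag2} and Sobolev embedding, and interpolate against a Strichartz bound to make this piece $\leq\frac\eta3$ for $T$ large. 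The recent contribution $\int_{T_0-T}^{T_0}$ is treated by the dual Strichartz estimate, which reduces it to controlling the source term in ${S^{s_c}}'([T_0-T,T_0])$ over an interval of the now fixed length $T$.

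On $[T_0-T,T_0]$ I would split space into $\{|x|\geq R\}$ and $\{|x|<R\}$. Outside the ball the sign $b<0$ gives $|x|^{2b}\leq R^{2b}$, while the radial Strauss inequality forces pointwise decay of $u$; combined, this region is $\leq\frac\eta6$ for $R$ large, uniformly in $E$. Inside the ball the weight is singular but integrable thanks to $\max\{-4,-\frac N2\}<2b$, and its contribution is governed by the truncated mass. The genuine difficulty is that the hypothesis only delivers smallness of $\int_{|x|<R}|u|^2$ at the single instant $T_0$, whereas the recent piece runs over all of $[T_0-T,T_0]$; this is the step I expect to be the main obstacle. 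I would resolve it by propagating the smallness: differentiating the truncated mass $\int\psi_R|u|^2\,dx$ and integrating by parts, the purely dispersive flux is bounded by $C(E)R^{-1}$ (the imaginary part annihilates the real weighted term $|x|^{2b}|u|^{2q}$), so that $\int_{|x|<R}|u(t)|^2\,dx\lesssim\epsilon+C(E)TR^{-1}$ on the whole interval; choosing $R$ large relative to $T$ and then $\epsilon$ small renders the inner contribution $\leq\frac\eta6$. The dimensional hypothesis and the lower bound $q\geq\frac32$ (through $\max\{q_*,x_0\}<q$) are precisely what keep the admissible exponents in the nonlinear Strichartz estimate and in the interpolation of the far piece compatible, and radiality is used both for the Strauss decay and to control the flux term.
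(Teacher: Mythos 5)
Your proposal follows the paper's own proof essentially step for step: the same reduction to smallness of $e^{i(\cdot-T)\Delta^2}u(T)$ in a single $\Gamma_{s_c}$ pair (Proposition \ref{fn}), the same Duhamel splitting into a linear tail, a far-past piece and a recent piece, the same dispersive $t^{-N/4}$ decay plus uniform $L^1$ bound (needing $q\geq\frac32$) and interpolation for the far piece with $N\geq5$ and $q>x_0$ entering exactly as you say, the same propagation of truncated-mass smallness via $\big|\frac{d}{dt}\int\psi_R|u|^2\,dx\big|\lesssim R^{-1}$ combined with radial Strauss decay for the recent piece, and the same closure via the absorption Lemma \ref{abs}. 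The one detail to adjust is the form of your nonlinear estimate: it should be the mixed bound $\|u\|_{L^\infty(I,H^2)}^\theta\|u\|_{L^a(I,L^r)}^{2q-1-\theta}$ of the paper's Lemma \ref{tch0} rather than a pure power of $\|u\|_{S^{s_c}(I)}$, since your argument produces smallness only in the single pair $(a,r)$, and a bootstrap against the full $S^{s_c}$ norm would not close.
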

\begin{proof}
By Lemma \ref{bnd0}, $u$ is bounded in $H^2$. Take $\epsilon>0$ near to zero and $R(\epsilon)>>1$ to be fixed later. Define the  Strichartz norm 
$$\|\cdot\|_{S^{s}(I)}:=\sup_{(q,r)\in\Gamma_s}\|\cdot\|_{L^q(I,L^r)}.$$
Let us estimate the non-linear part of the solution with Strichartz norms.
\begin{lem}\label{tch0}
Take $N\geq3$, $\max\{-4,-\frac N2\}<2b<0$. Take $q_*<q<q^*$ and $u\in C(\R,H^2)$ be a global solution to \eqref{S2}. Then, there exists $\theta\in(0,2q-1)$ such that
$$\|u-e^{i.\Delta}u_0\|_{S^{s_c}(I)}\lesssim \|u\|_{L^\infty(I,H^{2})}^\theta\|u\|^{2q-1-\theta}_{L^a(I,L^r)},$$
for a certain $(a,r)\in\Gamma_{s_c}$.
\end{lem}
\begin{proof}
Take the real numbers
\begin{gather*}
a:=\frac{2(2q-\theta)}{2-s_c},\quad d:=\frac{2(2q-\theta)}{2+(2q-1-\theta)s_c};\\
r:=\frac{2N(2q-\theta)}{(N-2s_c)(2q-\theta)-4(2-s_c)}.
\end{gather*}
The condition $\theta=0^+$ gives via some computations that the previous pairs satisfy the admissibility conditions. Indeed, $a>\frac 4{2-s_c}$ and $r<\frac{2N}{N-4}$. Moreover,
$$r=\frac{2N(2q-\theta)}{(N-2s_c)(2q-\theta)-4(2-s_c)}>\frac{2N}{N-2s_c}.$$
Thus,
$$  (a,r)\in \Gamma_{s_c},\quad (d,r)\in \Gamma_{-s_c},\quad
(2q-1-\theta)d'=a.$$
Take two real numbers $r_1,\mu$ satisfying
$$1=\frac2\mu+\frac{\theta}{r_1}+\frac{2q-\theta}r.$$
This gives
\begin{eqnarray*}
\frac{2N}\mu
&=&N-\frac{N\theta}{r_1}-\frac{N(2q-\theta)}r\\
&=&N-\frac{N\theta}{r_1}-\frac{(N-2s_c)(2q-\theta)-4(2-s_c)}2\\
&=&-2b-\frac{N\theta}{r_1}+\theta\frac{2+b}{q-1}.
\end{eqnarray*}
Using  Hardy-Littlewood-Sobolev and H\"older estimates, one has
\begin{eqnarray*}
\|\mathcal N\|_{L^{d'}(I,L^{r'}(|x|<1))}
&\lesssim&\||x|^b\|_{L^\mu(|x|<1)}^2\|u\|_{L^\infty(I,L^{r_1})}^\theta\|\|u\|_r^{2q-1-\theta}\|_{L^{d'}(I)}.
\end{eqnarray*}
 Then, choosing $r_1:=\frac{2N}{N-4}$, one gets because $q<q^*$,
$$\frac{2N}\mu+2b=\frac\theta{2(q-1)}(4+2b-(q-1)(N-4))>0.$$
So, $|x|^b\in L^\mu(|x|<1)$.
Then, by Sobolev injections, one gets
\begin{eqnarray*}
\|\mathcal N\|_{L^{d'}(I,L^{r'}(|x|<1))}
&\lesssim&\||x|^b\|_{L^\mu(|x|<1)}^2\|u\|_{L^\infty(I,L^{r_1})}^\theta\|\|u\|_r^{2q-1-\theta}\|_{L^{d'}(I)}\\
&\lesssim&\|u\|_{L^\infty(I,\dot H^{2})}^\theta\|u\|_{L^a(I,L^r)}^{2q-1-\theta}.
\end{eqnarray*}
Moreover, on the complementary of the unit ball, 
one takes $r_1:=2$, so because $q>q^*$, one has
$$\frac{2N}\mu+2b=\frac\theta{2(q-1)}(4+2b-N(q-1))<0.$$
So, $|x|^b\in L^\mu(|x|>1)$.
The proof is achieved via Strichartz estimates.
\end{proof}
The following result is the key to prove the scattering criterion.
\begin{prop}\label{fn}
Let $N\geq3$, $\max\{-4,-\frac N2\}<2b<0$. Take $\max\{q_*,x_0\}<q<q^*$ such that $q\geq\frac32$. Take $u_0\in H^2_{rd}$ satisfying 
$$\max\{\mathcal M\mathcal E(u_0),{\mathcal M\mathcal G}(u_0)\}<1.$$
Then, for any $\varepsilon>0$, there exist $T,\mu>0$ such that the global solution to \eqref{S2} satisfies
$$\|e^{i(\cdot-T)\Delta^2}u(T)\|_{L^a((T,\infty),L^r)}\lesssim \varepsilon^\mu.$$
\end{prop}
\begin{proof}
One keeps notations of the proof of Lemma \ref{tch0}. Let $0<\beta<<1$ and $T>\varepsilon^{-\beta}>0$. By the integral formula
\begin{eqnarray*}
e^{i(\cdot-T)\Delta^2}u(T)
&=&e^{i\cdot\Delta^2}u_0+i\int_0^Te^{i(\cdot-s)\Delta^2}[|x|^{2b}|u|^{2(q-1)}u]\,ds\\
&=&e^{i\cdot\Delta^2}u_0+i\Big(\int_0^{T-\varepsilon^{-\beta}}+\int_{T-\varepsilon^{-\beta}}^T\Big)e^{i(\cdot-s)\Delta^2}[|x|^{2b}|u|^{2(q-1)}u]\,ds\\
&:=&e^{i\cdot\Delta^2}u_0+F_1+F_2.
\end{eqnarray*}
$\bullet$ The linear term. Since $(a,\frac{Nr}{N+rs_c})\in\Gamma$, by Strichartz estimate and Sobolev injections, one has
\begin{eqnarray*}
\|e^{i\cdot\Delta^2}u_0\|_{L^a((T,\infty),L^r)}
&\lesssim&\||\nabla|^{s_c}e^{i\cdot\Delta^2}u_0\|_{L^a((T,\infty),L^\frac{Nr}{N+rs_c})}\lesssim\|u_0\|_{H^2}.
\end{eqnarray*}
$\bullet$ The term $F_2$. By Lemma \ref{tch0}, one has 
\begin{eqnarray*}
\|F_2\|_{L^a((T,\infty),L^r)}
&\lesssim&\|u\|_{L^\infty((T,\infty),H^{2})}^\theta\|u\|^{2q-1-\theta}_{S^{s_c}(T,\infty)}\\
&\lesssim&\|u\|^{2q-1-\theta}_{S^{s_c}(T,\infty)}.
\end{eqnarray*}
Now, by the assumptions of the scattering criterion, one has
$$\int_{\R^N}\psi_R(x)|u(T,x)|^2\,dx<\epsilon^2.$$
Moreover, a computation with use of \eqref{S2} and the properties of $\psi$ give
$$|\frac d{dt}\int_{\R^N}\psi_R(x)|u(t,x)|^2\,dx|\lesssim R^{-1}.$$
Then, for any $T-\varepsilon^{-\beta}\leq t\leq T$ and $R>\varepsilon^{-2-\beta}$, yields
$$\|\psi_Ru(t)\|\leq\Big( \int_{\R^N}\psi_R(x)|u(T,x)|^2\,dx+C\frac{T-t}R\Big)^\frac12\leq C\varepsilon.$$
Since $2<r<\frac{2N}{N-4}$, there exists $\lambda,\lambda'\in(0,1)$ such that ,  for $R>\varepsilon^{-\frac{2\lambda}{(N-1)\lambda'}}$,
\begin{eqnarray*}
\|u\|_{L^\infty((T,\infty),L^r)}
&\leq&\|\psi_Ru\|_{L^\infty((T,\infty),L^r)}+\|(1-\psi_R)u\|_{L^\infty((T,\infty),L^r)}\\
&\lesssim&\|\psi_Ru\|_{L^\infty((T,\infty),L^2)}^\lambda\|\psi_Ru\|_{L^\infty((T,\infty),L^\frac{2N}{N-4})}^{1-\lambda}\\
&+&\|(1-\psi_R)u\|_{L^\infty((T,\infty),L^\infty)}^{\lambda'}\|(1-\psi_R)u\|_{L^\infty((T,\infty),L^2)}^{1-\lambda'}\\
&\lesssim&\varepsilon^{\lambda}+R^{-\frac{\lambda'(N-1)}{2}}\\
&\lesssim&\varepsilon^\lambda.
\end{eqnarray*}
So,
\begin{eqnarray*}
\|F_2\|_{L^a((T,\infty),L^r)}
&\lesssim&\varepsilon^{-(2q-1-\theta)\frac\beta a}\|u\|_{L^\infty((T-\varepsilon^{-\beta},T),L^{r})}^{2q-1-\theta}\\
&\lesssim&\varepsilon^{-(2q-1-\theta)\frac\beta a}\varepsilon^{\lambda(2q-1-\theta)}\\
&\lesssim&\varepsilon^{(2q-1-\theta)(\lambda-\frac\beta a)}.
\end{eqnarray*}
$\bullet$ The term $F_1$. Take $\frac1r={\lambda''}(\frac1r+\frac{s_c}N)$, $\lambda''\in(0,1)$. By interpolation
\begin{eqnarray*}
\|F_1\|_{L^a((T,\infty),L^r)}
&\lesssim&\|F_1\|_{L^a((T,\infty),L^\frac{N+rs_c}{Nr})}^{\lambda''}\|F_1\|_{L^a((T,\infty),L^\infty)}^{1-\lambda''}\\
&\lesssim&\|e^{i(\cdot-(T-\varepsilon^{-\beta}))\Delta^2}u(T-\varepsilon^{-\beta})-e^{i\cdot\Delta^2}u_0\|_{L^a((T,\infty),L^\frac{N+rs_c}{Nr})}^{\lambda''}\|F_1\|_{L^a((T,\infty),L^\infty)}^{1-\lambda''}\\
&\lesssim&\|F_1\|_{L^a((T,\infty),L^\infty)}^{1-\lambda''}.
\end{eqnarray*}
 Recall the free Schr\"odinger operator decay
$$\|e^{it\Delta^2}\cdot\|_r\leq\frac C{t^{\frac N2(\frac12-\frac1r)}}\|\cdot\|_{r'}, \quad \forall r\geq2.$$
By Proposition \ref{gag2} via the previous estimate, one has
\begin{eqnarray*}
\|F_1\|_{\infty}
&\lesssim&\int_0^{T-\varepsilon^{-\beta}}\frac1{(t-s)^{\frac N4}}\||x|^{2b}|u|^{2q-1}\|_{1}\,ds\\
&\lesssim&\int_0^{T-\varepsilon^{-\beta}}\frac1{(t-s)^{\frac N4}}\|u(s)\|_{H^2}^{2q-1}\,ds\\
&\lesssim&(t-T+\varepsilon^{-\beta})^{1-\frac N4}.
\end{eqnarray*}
In the second line one uses the condition $q\geq\frac32.$ Thus, if $\frac N4>1+\frac1a$, it follows that
\begin{eqnarray*}
\|F_1\|_{L^a((T,\infty),L^r)}
&\lesssim&\|F_1\|_{L^a((T,\infty),L^\infty)}^{1-\lambda''}\\
&\lesssim&\Big(\int_T^\infty(t-T+\varepsilon^{-\beta})^{a[1-\frac N4]}\,dt\Big)^{\frac{1-\lambda''}a}\\
&\lesssim&\varepsilon^{(1-\lambda'')\beta[\frac N4-1-\frac1a]}.
\end{eqnarray*}
The condition $\frac N4>1+\frac1a$ reads
$$N\geq5\quad\mbox{and}\quad (2q-1)(q-1)>\frac{2(2+b)}{N-4}.$$
Take the polynomial function
$$P(X):=(2X-1)(X-1)-\frac{2(2+b)}{N-4}:=2(X-x_0)(X-x_1),\quad x_1>0.$$
So using the inequalities $q\geq\frac32$ and $q>x_1$, one gets
$$\|F_1\|_{L^a((T,\infty),L^r)}\lesssim\varepsilon^c,\quad c>0.$$
One concludes the proof by collecting the previous estimates.
\end{proof}
Now, one proves the scattering criterion. Taking account of Duhamel formula, there exists $\gamma>0$ such that
$$\|e^{i.\Delta^2}u(T)\|_{L^a((0,\infty),L^r)}=\|e^{i(.-T)\Delta^2}u(T)\|_{L^a((0,\infty),L^r)}\lesssim\epsilon^\mu.$$
So, with Lemma \ref{tch0} via the absorption result Lemma \ref{abs}, one gets
$$\|u\|_{L^a((0,\infty),L^r)}\lesssim\epsilon^\mu.$$
With Lemma \ref{tch0}, one gets for $u_+:=e^{-iT\Delta^2}u(T)+i\int_T^\infty e^{-is\Delta^2}\mathcal N\,ds$,
\begin{eqnarray*}
\|u(t)-e^{it\Delta^2}u_+\|_{H^2}
&=&\|\int_t^\infty e^{i(t-s)\Delta^2}\mathcal N\,ds\|_{H^2}\\
&\lesssim&\|u\|_{L^\infty((t,\infty),H^2)}^{\theta}\|u\|_{L^a((t,\infty),L^r)}^{2q-1-\theta}\\
&&\to0.
\end{eqnarray*}
This finishes the proof.
\end{proof}
\subsection{Scattering}
Theorem \ref{sctr2} about the scattering of energy global solutions to the focusing problem \eqref{S2} follows with Proposition \ref{crt} via Lemma \ref{evac}.
\section{The generalized Hartree problem \eqref{S}}
This section is devoted to prove Theorem \ref{sctr}.
\subsection{Variational Analysis}
In this sub-section, one collects some estimates needed in the proof of the scattering of global solutions to the focusing Choquard problem \eqref{S}. 
\begin{lem}\label{bnd}
Take $(N,\alpha,b)$ satisfying $(\mathcal C)$ and $p_*<p<p^*$ such that $p\geq2$. Let $u_0\in H^2$ satisfying 
$$\max\{\mathcal M\mathcal E'(u_0),{\mathcal M\mathcal G'}(u_0)\}<1.$$
Then, there exists $\delta>0$ such that the solution $u\in C(\R,H^2)$ satisfies
$$\max\{\sup_{t\in\R}\mathcal M\mathcal E'(u(t)),\sup_{t\in\R}{\mathcal M\mathcal G}'(u(t))\}<1-\delta.$$
\end{lem}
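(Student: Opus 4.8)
The plan is to reproduce the argument of Lemma~\ref{bnd0} line by line, replacing the local Gagliardo--Nirenberg inequality by its nonlocal counterpart (Proposition~\ref{gag2}, part~2) and the Schr\"odinger exponents $(D,E)$ by the Choquard exponents $(B,A)$. First I would invoke the conservation of mass and of the energy $E[u]=\|\Delta u\|^2-\frac1p\int_{\R^N}(I_\alpha*|\cdot|^b|u|^p)|x|^b|u|^p\,dx$ for \eqref{S}, and expand the hypothesis $\mathcal M\mathcal E'(u_0)<1$. Bounding the nonlocal term through Proposition~\ref{gag2} yields a $\delta>0$ with
\begin{equation*}
1-\delta>\frac{M(u_0)^{\frac{2-s_c'}{s_c'}}}{M(\phi)^{\frac{2-s_c'}{s_c'}}E(\phi)}\Big(\|\Delta u(t)\|^2-\frac{C(N,p,b,\alpha)}p\|u\|^A\|\Delta u(t)\|^B\Big),
\end{equation*}
where $\phi$ is the ground state solving \eqref{grnd} and $s_c'$ is the Choquard critical index.

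Next I would substitute the Pohozaev identities for the Choquard ground state, namely $E(\phi)=\frac{B-2}B\|\Delta\phi\|^2=\frac{B-2}A\|\phi\|^2$, together with the sharp constant $C(N,p,b,\alpha)=\frac{2p}A(\frac AB)^{B/2}\|\phi\|^{-2(p-1)}$ from Proposition~\ref{gag2}, part~2b, and the scaling relations $s_c'=\frac{B-2}{p-1}$ and $\frac BA=(\frac{\|\Delta\phi\|}{\|\phi\|})^2$. As in the local case, these collapse the right-hand side into a function of the single scale-invariant quantity
\begin{equation*}
X(t):=\frac{\|u_0\|^{\frac{2-s_c'}{s_c'}}\|\Delta u(t)\|}{\|\phi\|^{\frac{2-s_c'}{s_c'}}\|\Delta\phi\|},
\end{equation*}
producing the inequality $1-\delta>g(X(t))$ with $g(x):=\frac B{B-2}x^2-\frac2{B-2}x^B$.

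Since $g'(x)=\frac{2B}{B-2}x(1-x^{B-2})$, the function $g$ is strictly increasing on $[0,1]$ with $g(1)=1$; the hypothesis $p>p_*$ is exactly what forces $B>2$, making $g$ well defined and monotone there. The assumption $\mathcal M\mathcal G'(u_0)<1$ gives $X(0)<1$, and $t\mapsto X(t)$ is continuous, so the bound $g(X(t))<1-\delta$ prevents $X(t)$ from ever reaching $1$. A continuity/bootstrap argument then confines $X(t)$ to $g^{-1}([0,1-\delta])=[0,1-\epsilon]$ for some $\epsilon>0$ uniformly in $t$; this is the desired bound on $\mathcal M\mathcal G'$, and reinserting it into the energy identity delivers the bound on $\mathcal M\mathcal E'$.

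The main obstacle I anticipate lies in establishing the Pohozaev identities $E(\phi)=\frac{B-2}B\|\Delta\phi\|^2=\frac{B-2}A\|\phi\|^2$ for the nonlocal ground state~\eqref{grnd}: unlike the pointwise source of \eqref{S2}, the convolution $(I_\alpha*|\cdot|^b|\phi|^p)$ forces one to carry out a dilation argument on the full Choquard functional and to track the two distinct homogeneities $A$ and $B$ separately, which is where the restriction $(\mathcal C)$ on $(N,\alpha,b)$ is genuinely used. Once these identities and the sharp constant are in hand, the remaining algebra and the one-variable analysis are verbatim the local case and present no difficulty.
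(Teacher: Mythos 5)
Your proposal is correct and is exactly the argument the paper intends: the paper gives no separate proof of Lemma \ref{bnd}, only a remark that it "follows like Lemmas \ref{bnd0}--\ref{crcv1} via Proposition \ref{gag2}," and you carry out precisely that adaptation, with the right substitutions $(D,E,Q,s_c)\to(B,A,\phi,s_c')$, the correct Pohozaev identities $E(\phi)=\frac{B-2}{B}\|\Delta\phi\|^2=\frac{B-2}{A}\|\phi\|^2$ (which follow algebraically from testing \eqref{grnd} against $\bar\phi$ and the constraint $K[\phi]=0$), and the same monotonicity-plus-continuity argument for $g(x)=\frac{B}{B-2}x^2-\frac{2}{B-2}x^B$ on $[0,1]$, using $p>p_*\Leftrightarrow B>2$. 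The only cosmetic point is that the bound on $\mathcal{M}\mathcal{E}'(u(t))$ is immediate from conservation of mass and energy (it is constant in time), so no reinsertion into the energy identity is needed for that part.
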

Let us give a coercivity estimate on centered balls with large radials.
\begin{lem}\label{crcv}
There exists $R_0:=R_0(\delta,M(u),\phi)>0$ such that for any $R>R_0$,
$$\sup_{t\in\R}\|\psi_R u(t)\|^{2-s_c'}\|\Delta(\psi_Ru(t))\|^{s_c'}<(1-\delta)\|\phi\|^{2-s_c'}\|\Delta\phi\|^{s_c'}.$$
In particular, there exists $\delta'>0$ such that
$$\|\Delta(\psi_Ru)\|^2-\frac B{2p}\int_{\R^N}(I_\alpha*|\cdot|^b|\psi_Ru|^p)|x|^b|\psi_Ru|^p\,dx\geq\delta'\|\psi_Ru\|_{\frac{2Np}{N+\alpha+2b}}^2.$$ 
\end{lem}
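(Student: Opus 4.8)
The plan is to prove Lemma \ref{crcv} in two stages, exactly paralleling the structure of the already-established Lemma \ref{crcv1} for the local problem \eqref{S2}. The first claim is a coercivity bound for the localized function $\psi_R u(t)$ in the scale-invariant quantity $\|\cdot\|^{2-s_c'}\|\Delta\cdot\|^{s_c'}$, and the second is the genuine coercivity of the constraint-type functional $\|\Delta(\psi_R u)\|^2-\frac{B}{2p}\int(I_\alpha*|\cdot|^b|\psi_R u|^p)|x|^b|\psi_R u|^p\,dx$. The second claim follows cleanly once the first is in hand, so I would treat the first as the crux and derive the second as a consequence.

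The key steps proceed as follows. First I would establish the pointwise-in-time coercivity inequality directly for $u(t)$ (not yet localized): starting from the energy $E[u]=\|\Delta u\|^2-\frac1p\int(I_\alpha*|\cdot|^b|u|^p)|x|^b|u|^p\,dx$, apply the inhomogeneous Gagliardo--Nirenberg inequality of Proposition \ref{gag2}(2) together with its sharp constant $C(N,p,b,\alpha)=\frac{2p}{A}(\frac AB)^{B/2}\|\phi\|^{-2(p-1)}$, then invoke Lemma \ref{bnd} to extract the uniform gap $1-\delta$. This mirrors line-for-line the computation in Lemma \ref{crcv1}: one factors out $\|\Delta u\|^2$, bounds the nonlinear term by $[\|u\|^{2-s_c'}\|\Delta u\|^{s_c'}]^{p-1}$ up to the sharp constant, and uses the Pohozaev relations for the ground state $\phi$ together with the identities $s_c'=\frac{B-2}{p-1}$ and $\frac BA=(\frac{\|\Delta\phi\|}{\|\phi\|})^2$ to collapse everything, obtaining
$$
\|\Delta u\|^2-\frac{B}{2p}\int_{\R^N}(I_\alpha*|\cdot|^b|u|^p)|x|^b|u|^p\,dx\geq\delta\|\Delta u\|^2.
$$
Then, using the Hardy--Littlewood--Sobolev estimate of Corollary \ref{lhs2} and Sobolev embeddings (valid since $p<p^*$), I would upgrade the right-hand side to control $\|u\|_{\frac{2Np}{N+\alpha+2b}}^2$, producing some $\delta'>0$.

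To pass from $u$ to the localized $\psi_R u$, the essential input is an approximate commutation of $\Delta$ with the cutoff. The plan is to cite or reprove the estimate $\|\Delta(\psi_R u)\|^2-\|\psi_R\Delta u\|^2\leq C(u_0,\phi)R^{-2}$ (the analogue of \eqref{vd}, which \cite{vdd} supplies by a direct computation), so that $\|\Delta(\psi_R u)\|\to\|\Delta u\|$ uniformly as $R\to\infty$; combined with $\|\psi_R u\|\leq\|u\|$, this gives the scale-invariant bound of the first displayed inequality for all $R>R_0$ once $\delta'$ is relaxed slightly. The main obstacle I anticipate is precisely this localization step in the \emph{nonlocal} setting: unlike the pointwise local nonlinearity, the Choquard term $\int(I_\alpha*|\cdot|^b|\psi_R u|^p)|x|^b|\psi_R u|^p$ involves the Riesz potential acting across all of $\R^N$, so replacing $u$ by $\psi_R u$ inside the convolution must be controlled carefully. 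I would handle this with Corollary \ref{lhs2} to bound the difference by tails of $|x|^b|u|^p$ outside $\{|x|<R/2\}$, which are $O(R^{-\sigma})$ for some $\sigma>0$ under condition $(\mathcal C)$; these corrections are absorbed into the gap, and with the coercivity already proved for $u$ the desired inequality for $\psi_R u$ follows for $R$ large, completing the proof.
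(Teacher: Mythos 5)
Your overall architecture (establish the scale-invariant bound for $\psi_R u$ first, then deduce the coercivity) is the right one, and your first three steps coincide with what the paper intends: the paper gives no separate proof of Lemma \ref{crcv}, only a remark that it follows like Lemmas \ref{bnd0}--\ref{crcv1} via Proposition \ref{gag2}, and your sharp Gagliardo--Nirenberg/Pohozaev computation, the Sobolev upgrade of the right-hand side, and the passage to $\psi_R u$ through $\|\psi_Ru\|\leq\|u\|$ and the commutator bound \eqref{vd} are exactly the ingredients of that template. (Minor imprecision: \eqref{vd} yields only the one-sided bound $\|\Delta(\psi_Ru)\|^2\leq\|\psi_R\Delta u\|^2+CR^{-2}$, not $\|\Delta(\psi_Ru)\|\to\|\Delta u\|$; but the one-sided bound is all your derivation of the first inequality actually uses.)

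The genuine gap is in your last step, where you obtain the second (main) inequality from ``the coercivity already proved for $u$'' plus tail estimates on the difference of the two Choquard integrals. Write $\mathcal P[v]:=\int_{\R^N}(I_\alpha*|\cdot|^b|v|^p)|x|^b|v|^p\,dx$. Since $0\leq\psi_R\leq1$ and all kernels are positive, one even has $\mathcal P[\psi_Ru]\leq\mathcal P[u]$ exactly (your $O(R^{-\sigma})$ tail bound is provable but not needed for this direction), so your transfer can only produce
$$\|\Delta(\psi_Ru)\|^2-\frac B{2p}\mathcal P[\psi_Ru]\;\geq\;\|\Delta(\psi_Ru)\|^2-\frac B{2p}\mathcal P[u]\;\geq\;\|\Delta(\psi_Ru)\|^2-(1-\delta)\|\Delta u\|^2,$$
and the right-hand side has no favorable sign: \eqref{vd} controls $\|\Delta(\psi_Ru)\|^2$ from above by $\|\Delta u\|^2+CR^{-2}$ but never from below, and $\|\Delta(\psi_Ru)\|$ can be arbitrarily small compared with $\|\Delta u\|$ --- precisely when the solution's $\dot H^2$ mass sits outside the ball of radius $R$, which is the regime a scattering argument must permit. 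An $O(R^{-\sigma})$ correction on the potential term cannot compensate an order-one loss on the kinetic term, so this step fails. The repair --- and what the paper's reference to the proof of Lemma \ref{crcv1} actually means --- is never to compare $\mathcal P[\psi_Ru]$ with $\mathcal P[u]$ at all: Proposition \ref{gag2}(2) holds for every $H^2$ function, so one reruns the sharp-constant computation directly on $v:=\psi_Ru$, for which your first claim supplies the below-threshold input, giving
$$\frac B{2p}\mathcal P[v]\leq\Big[\frac{\|v\|^{2-s_c'}\|\Delta v\|^{s_c'}}{\|\phi\|^{2-s_c'}\|\Delta\phi\|^{s_c'}}\Big]^{p-1}\|\Delta v\|^2\leq(1-\delta)^{p-1}\|\Delta v\|^2,$$
hence the gap $\big(1-(1-\delta)^{p-1}\big)\|\Delta v\|^2$, after which the Sobolev/boundedness step yields $\delta'\|v\|_{\frac{2Np}{N+\alpha+2b}}^2$. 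In this route the nonlocality you flagged as the main obstacle is a non-issue: no splitting of the Riesz potential and no tail estimate enters this lemma (such decompositions are needed later, in the Morawetz estimate of Lemma \ref{bnd1'}, but not here).
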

\begin{rem}
The two previous result follows like Lemmas \ref{bnd0}-\ref{crcv1} via Proposition \ref{gag2}.
\end{rem}
\subsection{Morawetz identity}
This subsection is devoted to prove  a classical Morawetz estimate satisfied by the energy global solutions to the inhomogeneous Choquard problem \eqref{S}. 
\begin{prop}\label{mrwtz}
Take $(N,\alpha,b)$ satisfying $(\mathcal C)$, $p_*< p<p^*$ such that $p\geq2$ and $u\in C_{T}(H^2)$ be a local solution to \eqref{S2}. Let $a:\R^N\to\R$ be a convex smooth function  and the real function defined on $[0,T)$, by
$$M:t\to2\int_{\R^N}\nabla a(x)\Im(\nabla u(t,x)\bar u(t,x))\,dx.$$
Then, the following equality holds on $[0,T)$,
\begin{eqnarray*}
M'
&=&2\int_{\R^N}\Big(2\partial_{jk}\Delta a\partial_ju\partial_k\bar u-\frac12(\Delta^3a)|u|^2-4\partial_{jk}a\partial_{ik}u\partial_{ij}\bar u\\
&+&\Delta^2a|\nabla u|^2+\partial_ja\{(I_\alpha*|\cdot|^b|u|^p)|x|^{b}|u|^{p-2}u,u\}_p^j\Big)\,dx\\
&=&2\int_{\R^N}\Big(2\partial_{jk}\Delta a\partial_ju\partial_k\bar u-\frac12(\Delta^3a)|u|^2-4\partial_{jk}a\partial_{ik}u\partial_{ij}\bar u+\Delta^2a|\nabla u|^2\Big)\\
&+&2\Big((1-\frac2p)\int_{\R^N}\Delta a(I_\alpha*|\cdot|^b|u|^{p})|x|^b|u|^p\,dx-\frac2{p}\int_{\R^N}\partial_ka\partial_k(|x|^b[I_\alpha*|\cdot|^b|u|^p])|u|^{p}\,dx\Big).
\end{eqnarray*}
\end{prop}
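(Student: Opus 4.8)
The plan is to follow exactly the scheme of Lemma \ref{mrw1}, since the two problems differ only in the source term. I would begin by writing $\mathcal N := -(I_\alpha*|\cdot|^b|u|^p)|x|^b|u|^{p-2}u$ and computing the time derivative of the momentum density $\Im(\partial_k u\,\bar u)$. Using the equation \eqref{S} in the form $i\dot u = -\Delta^2 u - \mathcal N$, the same manipulation as in Lemma \ref{mrw1} gives
\begin{eqnarray*}
\partial_t\Im(\partial_k u\,\bar u)
&=&\Re(\bar u\,\partial_k\Delta^2 u-\partial_k\bar u\,\Delta^2 u)+\Re(\bar u\,\partial_k\mathcal N-\partial_k\bar u\,\mathcal N).
\end{eqnarray*}
Multiplying by $\partial_k a$ and integrating, the biharmonic contribution produces precisely the terms $2\partial_{jk}\Delta a\,\partial_j u\,\partial_k\bar u-\tfrac12(\Delta^3 a)|u|^2-4\partial_{jk}a\,\partial_{ik}u\,\partial_{ij}\bar u+\Delta^2 a\,|\nabla u|^2$ after the integrations by parts carried out as in Proposition 3.1 of \cite{mwz}; this part is identical to the local case and requires no new idea. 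This yields the first displayed equality, with the nonlinear piece expressed abstractly through the momentum bracket $\partial_j a\,\{\mathcal N,u\}_p^j$.

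The substantive step is the second equality, namely converting the bracket term $\int_{\R^N}\partial_k a\,\{(I_\alpha*|\cdot|^b|u|^p)|x|^b|u|^{p-2}u,\,u\}_p^k\,dx$ into the explicit form stated. The hard part will be establishing the nonlocal analogue of the pointwise identity used at the end of Lemma \ref{mrw1}. For the local term one had the clean identity $\{|x|^{2b}|u|^{2(q-1)}u,u\}_p=-\frac{q-1}{q}\nabla(|x|^{2b}|u|^{2q})-\frac1q\nabla(|x|^{2b})|u|^{2q}$; here, because the Riesz potential couples two copies of $u$ at different points, no such purely pointwise divergence identity exists. Instead I would compute $\{\mathcal N,u\}_p$ using the product rule on $|u|^{p-2}u$, and then integrate against $\nabla a$, exploiting the symmetry of the convolution operator $I_\alpha$ to move derivatives between the two factors. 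Concretely, writing $W:=I_\alpha*(|\cdot|^b|u|^p)$, one has $\Re(\bar u\,\partial_k(W|x|^b|u|^{p-2}u))$ and its conjugate; the $|u|^{p}$-weighted derivatives of $W\cdot|x|^b$ assemble, after using $\int (I_\alpha*f)g=\int f(I_\alpha*g)$ and the fact that $I_\alpha$ commutes with translations (so $\partial_k$ passes through the convolution), into the two displayed terms: a factor-$(1-\frac2p)$ divergence term carrying $\Delta a$, and a remainder term $-\frac2p\,\partial_k a\,\partial_k(|x|^b[I_\alpha*|\cdot|^b|u|^p])|u|^p$.

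The key algebraic point I would verify carefully is the bookkeeping of the constant $1-\frac2p$ (equivalently the appearance of $B=\frac{Np-N-\alpha-2b}{2}$ once one specializes to $a$ quadratic), which arises from the two symmetric convolution pairings contributing equally plus the derivative falling on the outer $|u|^{p-2}u$ factor. I would justify all integrations by parts and the symmetrization of the Riesz kernel using the regularity $u\in C_T(H^2)$ together with the Hardy–Littlewood–Sobolev bound of Lemma \ref{Hardy-Littlewwod-Sobolev} and Corollary \ref{lhs2}, which guarantee that $W|x|^b|u|^p$ and its relevant first derivatives are integrable under the hypotheses $(\mathcal C)$ and $p\geq 2$; the lower bound $p\geq2$ is exactly what makes $|u|^{p-2}u$ continuously differentiable so that the product-rule computation is legitimate. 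Once the pointwise/symmetrized identity for the bracket term is in hand, substituting it into the first equality gives the second, completing the proof.
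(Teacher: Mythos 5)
Your handling of the first equality is exactly the paper's: the biharmonic part is quoted from the computation already done for Lemma \ref{mrw1} (i.e.\ Proposition 3.1 of the cited work), and only the bracket term changes. For the second equality, however, you anticipate a difficulty that is not there, and the machinery you invoke to resolve it --- the symmetry $\int(I_\alpha*f)g=\int f(I_\alpha*g)$ and the commutation of $\partial_k$ with the convolution --- is never used in the paper and is not what produces the stated identity. The point you missed is that the claimed formula keeps $\partial_k\big(|x|^b[I_\alpha*|\cdot|^b|u|^p]\big)$ \emph{unexpanded}, so the nonlocal factor $W:=I_\alpha*(|\cdot|^b|u|^p)$ can be treated throughout as a frozen coefficient; the ``nonlocal analogue of the pointwise identity'' you were looking for does exist once $W$ is frozen. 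The paper's computation is two lines: with $\mathcal N=-W|x|^b|u|^{p-2}u$ one writes $\bar u\partial_k\mathcal N-\partial_k\bar u\,\mathcal N=\partial_k(\bar u\mathcal N)-2\partial_k\bar u\,\mathcal N$, notes that $\bar u\mathcal N=-W|x|^b|u|^p$ is real and that $\Re(\partial_k\bar u\,\mathcal N)=-\frac1p W|x|^b\partial_k(|u|^p)$ (since $|u|^{p-2}\Re(u\partial_k\bar u)=\frac1p\partial_k(|u|^p)$), then integrates $\partial_k a$ against this and integrates by parts once more in the term $\frac2p\int\partial_ka\,W|x|^b\partial_k(|u|^p)\,dx$, obtaining
$$\Big(1-\tfrac2p\Big)\int_{\R^N}\Delta a\,(I_\alpha*|\cdot|^b|u|^p)|x|^b|u|^p\,dx-\tfrac2p\int_{\R^N}\partial_ka\,\partial_k\big(|x|^b[I_\alpha*|\cdot|^b|u|^p]\big)|u|^p\,dx.$$
In particular your bookkeeping of the constant is misattributed: $1-\frac2p$ arises as $\frac{p-2}{p}$ from the product rule on the \emph{local} factor (the coefficients $\frac1p$ and $\frac{p-1}p$ multiplying $W|x|^b\nabla(|u|^p)$ in the two halves of the bracket) combined with a single integration by parts; it has nothing to do with ``two symmetric convolution pairings contributing equally.'' Symmetrization of the Riesz kernel in double-integral form, $(\nabla a(x)-\nabla a(y))\cdot(x-y)$ and so on, only enters later, in the proof of the Morawetz estimate of Lemma \ref{bnd1'}, where sign information is needed; it plays no role in this identity. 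Stripped of these detours, your plan (product rule on $|u|^{p-2}u$, integrate against $\nabla a$, integrate by parts, using $p\geq2$ for the legitimacy of the differentiation) reduces to the paper's proof and is correct.
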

\begin{proof}
Denote the source term $\mathcal N:=-(I_\alpha*|\cdot|^b|u|^p)|x|^{b}|u|^{p-2}u.$ By previous computation, one gets 
\begin{eqnarray*}
M'
&=&-2\int_{\R^N}\Delta a\Re(\bar u\Delta^2 u)\,dx-4\int_{\R^N}\Re(\partial_ka\partial_k\bar u\Delta^2 u)\,dx-2\int_{\R^N}\partial_ka\{\mathcal N,u\}_p^k\,dx\\
\end{eqnarray*}
On the other hand
\begin{eqnarray*}
(I)
&:=&\int_{\R^N}\partial_ka\Re(\bar u\partial_k\mathcal N-\partial_k\bar u\mathcal N)\,dx\\
&=&\int_{\R^N}\partial_ka\Re(\partial_k[\bar u\mathcal N]-2\partial_k\bar u\mathcal N)\,dx\\
&=&-\int_{\R^N}\Big(\Delta a\bar u\mathcal N+2\Re(\partial_ka\partial_k\bar u\mathcal N)\Big)\,dx\\
&=&\int_{\R^N}\Big(\Delta a(I_\alpha*|\cdot|^b|u|^{p})|x|^b|u|^p-2\Re(\partial_ka\partial_k\bar u\mathcal N)\Big)\,dx\\
&=&\int_{\R^N}\Delta a(I_\alpha*|\cdot|^b|u|^{p})|x|^b|u|^p\,dx+\frac2{p}\int_{\R^N}\partial_ka\partial_k(|u|^{p})(I_\alpha*|\cdot|^b|u|^p)|x|^b\,dx.
\end{eqnarray*}
Moreover,
{\begin{eqnarray*}
(A)
&:=&\int_{\R^N}|x|^b\partial_ka\partial_k(|u|^{p})(I_\alpha*|\cdot|^b|u|^p)\,dx\\
&=&-\int_{\R^N}div(|x|^b\partial_ka(I_\alpha*|\cdot|^b|u|^p))|u|^{p}\,dx\\
&=&-\int_{\R^N}\Delta a(I_\alpha*|\cdot|^b|u|^p)|x|^b|u|^{p}\,dx-\int_{\R^N}\partial_ka\partial_k(|x|^b[I_\alpha*|\cdot|^b|u|^p])|u|^{p}\,dx.
\end{eqnarray*}}
Then,
\begin{eqnarray*}
(I)
&=&\int_{\R^N}\Delta a(I_\alpha*|\cdot|^b|u|^{p})|x|^b|u|^p\,dx+\frac2{p}(A)\\
&=&\int_{\R^N}\Delta a(I_\alpha*|\cdot|^b|u|^{p})|x|^b|u|^p\,dx\\
&+&\frac2{p}(-\int_{\R^N}\Delta a(I_\alpha*|\cdot|^b|u|^p)|x|^b|u|^{p}\,dx-\int_{\R^N}\partial_ka\partial_k(|x|^b[I_\alpha*|\cdot|^b|u|^p])|u|^{p}\,dx)\\
&=&(1-\frac2p)\int_{\R^N}\Delta a(I_\alpha*|\cdot|^b|u|^{p})|x|^b|u|^p\,dx-\frac2{p}\int_{\R^N}\partial_ka\partial_k(|x|^b[I_\alpha*|\cdot|^b|u|^p])|u|^{p}\,dx.
\end{eqnarray*}
This closes the proof.
\end{proof}
The main result of this subsection reads as follows.
\begin{lem}\label{bnd1'}
Take $(N,\alpha,b)$ satisfying $(\mathcal C)$ and $p_*<p<p^*$ such that $p\geq2$. Let $u_0\in H^2_{rd}$ satisfying 
$$\max\{\mathcal M\mathcal E'(u_0),{\mathcal M\mathcal G}'(u_0)\}<1.$$
Then, for any $T>0$, one has
$$\int_0^T\|u(t)\|_{\frac{2Np}{N+\alpha+2b}}^2\,dt\leq CT^{\frac13}.$$
\end{lem}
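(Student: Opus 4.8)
The plan is to mirror exactly the Morawetz argument carried out for the Schr\"odinger problem \eqref{S2} in Lemma \ref{bnd1}, now using the Choquard Morawetz identity of Proposition \ref{mrwtz} and the coercivity estimate of Lemma \ref{crcv}. First I would introduce the same truncated weight: a smooth radial $f:r\mapsto\frac{r^2}2$ for $0\le r\le\frac12$ and $f\equiv1$ for $r\ge1$ with $0\le f''\le1$, set $f_R:=R^2f(\tfrac{|\cdot|}R)$, and define the Morawetz functional $M_R(t):=2\int_{\R^N}\nabla f_R\cdot\Im(\nabla u\,\bar u)\,dx$. Since $u$ is bounded in $H^2$ by Lemma \ref{bnd}, we have $\sup_{[0,T]}|M_R|\lesssim R$ via Cauchy--Schwarz. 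The strategy is then to bound $M_R'$ from below on the ball $\{|x|<R/2\}$ by the coercive quantity, control the error terms on the annulus $\{R/2<|x|<R\}$ by negative powers of $R$, integrate in time, and optimize $R=T^{1/3}$.

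Concretely I would take $a=f_R$ in Proposition \ref{mrwtz}. On the inner ball $\{|x|<R/2\}$ where $f_R=\frac{|x|^2}2$, the identity collapses (as in Lemma \ref{bnd1}) to the main term $2\big((1-\frac2p)N-\text{derivative contributions}\big)$, which after using $B=\frac{Np-N-\alpha-2b}2$ reproduces exactly the constraint combination $\|\Delta(\psi_Ru)\|^2-\frac B{2p}\int(I_\alpha*|\cdot|^b|\psi_Ru|^p)|x|^b|\psi_Ru|^p\,dx$ appearing in Lemma \ref{crcv}, up to the commutator error \eqref{vd} of order $R^{-2}$. Lemma \ref{crcv} then gives
$$
M_R'\gtrsim\delta'\|\psi_R u\|_{\frac{2Np}{N+\alpha+2b}}^2+\mathcal O(R^{-2}),
$$
so that $\int_0^T\|\psi_Ru(t)\|_{\frac{2Np}{N+\alpha+2b}}^2\,dt\lesssim\sup_{[0,T]}|M_R|+R^{-2}T\lesssim R+R^{-2}T$. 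Choosing $R=T^{1/3}\gg1$ and replacing $\psi_Ru$ by $u$ on $\{|x|<R/2\}$ yields the claimed bound $\int_0^T\|u(t)\|_{\frac{2Np}{N+\alpha+2b}}^2\,dt\le CT^{1/3}$, with the regime $0<T\ll1$ handled by Sobolev embedding.

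The terms involving $\nabla f_R$, $\Delta^2 f_R$, $\Delta^3 f_R$, and the fourth-order piece $\partial_{jk}f_R\partial_{ik}u\partial_{ij}\bar u$ are supported in the annulus and, using $\||\nabla|^k f_R\|_\infty\lesssim R^{2-k}$ together with the radial Strauss decay $|u(x)|\lesssim|x|^{-(N-1)/2}\|u\|_{H^1}$, contribute powers $R^{-2}$ or faster, exactly as in Lemma \ref{bnd1}. The one genuinely new ingredient is controlling the nonlocal annulus term $\frac2p\int_{\{R/2<|x|<R\}}\partial_k f_R\,\partial_k(|x|^b[I_\alpha*|\cdot|^b|u|^p])|u|^p\,dx$; I expect this to be the main obstacle, since the Riesz potential is nonlocal and one cannot simply restrict $u$ to the annulus inside the convolution. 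The resolution is to estimate it by Corollary \ref{lhs2} (the Hardy--Littlewood--Sobolev inequality adapted to the Choquard nonlinearity) combined with the Strauss pointwise decay on the annulus, producing a bound $\lesssim R^{-((N-1)(p-1)-\text{(powers of }b,\alpha))}$ that is negative for $p$ above the mass-critical threshold $p_*$; the hypothesis $(N,\alpha,b)\in(\mathcal C)$ and $p>p_*$ guarantee the exponent is strictly positive so this term is absorbed into $\mathcal O(R^{-2})$. Collecting all estimates and optimizing $R$ closes the argument.
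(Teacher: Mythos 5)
Your overall skeleton (truncated virial weight $f_R$, Proposition \ref{mrwtz}, coercivity Lemma \ref{crcv}, $\sup_{[0,T]}|M_R|\lesssim R$, optimization $R=T^{1/3}$) is indeed the paper's, but there is a genuine gap at exactly the point you flag as the new ingredient: the nonlocal term
$$\frac2p\int_{\R^N}\partial_kf_R\,\partial_k\big(|x|^b[I_\alpha*|\cdot|^b|u|^p]\big)|u|^p\,dx$$
is \emph{not} supported in the annulus $\{R/2<|x|<R\}$, because $\nabla f_R$ is not: $\nabla f_R(x)=x$ on all of $\{|x|<R/2\}$. Writing, with $g:=|\cdot|^b|u|^p$,
$$\nabla\big(|x|^b[I_\alpha*g]\big)=b\,x|x|^{b-2}[I_\alpha*g]+(\alpha-N)\,|x|^b\Big[\Big(\tfrac{\cdot}{|\cdot|^2}I_\alpha\Big)*g\Big],$$
the pairing with $\nabla f_R=x$ on the inner ball produces a bulk contribution of order one; since $b<0$ and $\alpha<N$, it enters $M_R'$ with a \emph{positive} sign, so discarding it as $\mathcal O(R^{-\epsilon})$ does not yield a valid upper bound for $M_R'$ — you would be dropping a positive term comparable to the main term. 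Worse, this bulk piece is not a nuisance but the heart of the computation: the $\Delta f_R$ term alone gives the coefficient $2N(1-\tfrac2p)=\tfrac{2N(p-2)}p$ in front of the Choquard integral, and it is precisely the bulk of the nonlocal term, worth $\tfrac2p(N-\alpha-2b)$, that raises it to $\tfrac{4B}p$; without it the localized constraint $\|\Delta(\psi_Ru)\|^2-\tfrac B{2p}\int(I_\alpha*|\cdot|^b|\psi_Ru|^p)|x|^b|\psi_Ru|^p\,dx$ of Lemma \ref{crcv} never appears (matching coefficients would require the false identity $N=\alpha+2b$).

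The paper's resolution, which your proposal lacks, is a symmetrization of the resulting double integrals. For the Riesz-kernel part one writes
$$(I)=\frac12\int_{\R^N}\int_{\R^N}\big(\nabla f_R(x)-\nabla f_R(y)\big)\cdot(x-y)\,\frac{I_\alpha(x-y)}{|x-y|^2}\,|y|^b|u(y)|^p\,|x|^b|u(x)|^p\,dx\,dy,$$
and splits $\R^N\times\R^N$ into the region $\{|x|,|y|<R/2\}$, where $\nabla f_R(x)-\nabla f_R(y)=x-y$ makes the integrand collapse to exactly $\tfrac12 I_\alpha(x-y)|y|^b|\psi_Ru(y)|^p|x|^b|\psi_Ru(x)|^p$ (a main term, with the right constant), plus the regions where at least one variable exceeds $R/2$, which are the genuine errors, handled by Strauss decay together with Hardy--Littlewood--Sobolev (Corollary \ref{lhs2}) and giving powers such as $R^{-\frac{B(N-1)}{2N}}$ rather than your $R^{-((N-1)(p-1)-\cdots)}$; the $|x|^b$-derivative part $(II)$ is split the same way. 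Only after these identifications does one obtain
$$M_R'\leq\frac{4B}p\int_{\{|x|<\frac R2\}}(I_\alpha*|\cdot|^b|\psi_Ru|^p)|x|^b|\psi_Ru|^p\,dx-8\int_{\{|x|<\frac R2\}}|\Delta u|^2\,dx+\mathcal O\big(R^{-2}+R^{-\frac{B(N-1)}{2N}}\big),$$
to which Lemma \ref{crcv} and \eqref{vd} apply; from there your time integration and the choice $R=T^{1/3}$ close the proof as in the local case.
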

\begin{proof}
One keeps the notations $f$, $f_R$ and $M_R$ defined in the proof of Lemma \ref{bnd1}.
By Morawetz estimate in Proposition \ref{mrwtz}, one has
\begin{eqnarray*}
M_R'
&=&2\int_{\R^N}\Big(2\partial_{jk}\Delta f_R\partial_ju\partial_k\bar u-\frac12(\Delta^3f_R)|u|^2-4\partial_{jk}f_R\partial_{ik}u\partial_{ij}\bar u+\Delta^2f_R|\nabla u|^2\Big)\,dx\\
&-&2\Big((-1+\frac2p)\int_{\R^N}\Delta f_R(I_\alpha*|\cdot|^b|u|^p)|x|^b|u|^p\,dx+\frac2{p}\int_{\R^N}\partial_kf_R\partial_k[(I_\alpha*|\cdot|^b|u|^p)|x|^b]|u|^{p}\,dx\Big)\\
&=&2\int_{\R^N}\Big(2\partial_{jk}\Delta f_R\partial_ju\partial_k\bar u-\frac12(\Delta^3f_R)|u|^2-\frac2{p}\partial_kf_R\partial_k[(I_\alpha*|\cdot|^b|u|^p)|x|^b]|u|^{p}\,dx+\Delta^2f_R|\nabla u|^2\Big)\,dx\\
&+&2\Big((1-\frac2p)N\int_{\{|x|<\frac R2\}}(I_\alpha*|\cdot|^b|u|^p)|x|^b|u|^p\,dx-4\int_{\{|x|<\frac R2\}}|\Delta u|^2\,dx\Big)\\
&+&2\Big((1-\frac2p)\int_{\{\frac R2<|x|<R\}}\Delta f_R(I_\alpha*|\cdot|^b|u|^p)|x|^b|u|^p\,dx-4\int_{\{\frac R2<|x|<R\}}\partial_{jk}f_R\partial_{ik}u\partial_{ij}\bar u\,dx\Big).
\end{eqnarray*}
Using the estimate $\||\nabla|^kf_R\|_\infty\lesssim R^{2-k}$, one has
\begin{gather*}
|\int_{\R^N}\partial_{jk}\Delta f_R\partial_ju\partial_k\bar u\,dx|\lesssim R^{-2};\\
|\int_{\R^N}(\Delta^3f_R)|u|^2\,dx|\lesssim R^{-4};\\
|\int_{\R^N}\Delta^2f_R|\nabla u|^2\,dx|\lesssim R^{-2}.
\end{gather*}
Moreover, by the radial setting, one writes
$$\int_{\{\frac R2<|x|<R\}}\partial_{jk}f_R\partial_{ik}u\partial_{ij}\bar u\,dx\geq (N-1)\int_{\{\frac R2<|x|<R\}}\frac{f'_R(r)}{r^3}|\partial_ru|^2\,dx=\mathcal O(R^{-2}).$$
Now, by Hardy-Littlewood-Sobolev and Strauss inequalities
\begin{eqnarray*}
|\int_{\{\frac R2<|x|<R\}}\Delta f_R(I_\alpha*|\cdot|^b|u|^p)|x|^b|u|^p\,dx|
&\lesssim&\|u\|_{L^{\frac{2Np}{\alpha+N+2b}}(|x|>\frac R2)}^{2p}\\
&\lesssim&\Big(\int_{|x|>\frac R2}|u(x)|^{\frac{2Np}{\alpha+N+2b}-2}|u(x)|^2\,dx\Big)^{\frac{\alpha+N+2b}N}\\
&\lesssim&\|u\|_{L^\infty(|x|>\frac R2)}^\frac{4B}N\Big(\int_{\R^N}|u(x)|^2\,dx\Big)^{\frac{\alpha+N+2b}N}\\
&\lesssim&R^{-\frac{2B(N-1)}N}.
\end{eqnarray*}
Thus, since $\|\nabla u\|^2\lesssim \|\Delta u\|\lesssim 1$, one gets
\begin{eqnarray*}
M_R'
&\leq&2\Big((1-\frac2p)N\int_{\{|x|<\frac R2\}}(I_\alpha*|\cdot|^b|u|^p)|x|^b|u|^p\,dx-4\int_{\{|x|<\frac R2\}}|\Delta u|^2\,dx\Big)\\
&-&\frac4{p}\int_{\R^N}\partial_kf_R\partial_k[(I_\alpha*|\cdot|^b|u|^p)|x|^b]|u|^{p}\,dx+\mathcal O(R^{-2}).
\end{eqnarray*}
Now, let us define the sets
\begin{gather*}
\Omega:=\{(x,y)\in\R^N\times\R^N,\,\,\mbox{s\,.t}\,\,\frac R2<|x|<R\}\cup \{(x,y)\in\R^N\times\R^N,\,\,\mbox{s\,.t}\,\,\frac R2<|y|<R\};\\
\Omega':=\{(x,y)\in\R^N\times\R^N,\,\,\mbox{s\,.t}\,\,|x|>R,|y|<\frac R2\}\cup \{(x,y)\in\R^N\times\R^N,\,\,\mbox{s\,.t}\,\,|x|<\frac R2, |y|> R\}.
\end{gather*}
Consider the term
\begin{eqnarray*}
(I)
&:=&\int_{\R^N}\nabla f_R(\frac.{|\cdot|^2}I_\alpha*|\cdot|^b|u|^p)|x|^b|u|^{p}\,dx\\
&=&\frac12\int_{\R^N}\int_{\R^N}(\nabla f_R(x)-\nabla f_R(y))(x-y)\frac{I_\alpha(x-y)}{|x-y|^2}|y|^b|u(y)|^p|x|^b|u(x)|^{p}\,dx\,dy\\
&=&\Big(\int_{\Omega}+\int_{\Omega'}+\int_{|x|,|y|<\frac R2}+\int_{|x|,|y|>R}\Big)\Big(\nabla f_R(x)(x-y)\frac{I_\alpha(x-y)}{|x-y|^2}|y|^b|u(y)|^p|x|^b|u(x)|^{p}\,dx\,dy\Big).
\end{eqnarray*}
Compute
\begin{eqnarray*}
(a)
&:=&\int_{\Omega'}\Big(\nabla f_R(x)(x-y)\frac{I_\alpha(x-y)}{|x-y|^2}|y|^b|u(y)|^p|x|^b|u(x)|^{p}\Big)\,dx\,dy\\
&=&\int_{\{|x|>R,|y|<\frac R2\}}\Big(\nabla f_R(x)(x-y)\frac{I_\alpha(x-y)}{|x-y|^2}|y|^b|u(y)|^p|x|^b|u(x)|^{p}\Big)\,dx\,dy\\
&+&\int_{\{|y|>R,|x|<\frac R2\}}\Big(\nabla f_R(x)(x-y)\frac{I_\alpha(x-y)}{|x-y|^2}|y|^b|u(y)|^p|x|^b|u(x)|^{p}\Big)\,dx\,dy\\
&=&\int_{\{|x|>R,|y|<\frac R2\}}\Big((\nabla f_R(x)-\nabla f_R(y))(x-y)\frac{I_\alpha(x-y)}{|x-y|^2}|y|^b|u(y)|^p|x|^b|u(x)|^{p}\Big)\,dx\,dy\\
&=&\int_{\{|x|>R,|y|<\frac R2\}}\Big(y(y-x)\frac{I_\alpha(x-y)}{|x-y|^2}|y|^b|u(y)|^p|x|^b|u(x)|^{p}\Big)\,dx\,dy.
\end{eqnarray*}
Moreover,
\begin{eqnarray*}
(b)
&:=&\frac12\int_{\{|x|<\frac R2,|y|<\frac R2\}}\Big((\nabla f_R(x)-\nabla f_R(y))(x-y)\frac{I_\alpha(x-y)}{|x-y|^2}|y|^b|u(y)|^p|x|^b|u(x)|^{p}\Big)\,dx\,dy\\
&=&\frac12\int_{\{|x|<\frac R2,|y|<\frac R2\}}\Big((x-y)(x-y)\frac{I_\alpha(x-y)}{|x-y|^2}|y|^b|u(y)|^p|x|^b|u(x)|^{p}\Big)\,dx\,dy\\
&=&\frac12\int_{\{|x|<\frac R2,|y|<\frac R2\}}\Big(I_\alpha(x-y)|y|^b|u(y)|^p|x|^b|u(x)|^{p}\Big)\,dx\,dy\\
&=&\frac12\int_{\R^N}(I_\alpha*|\cdot|^b|\psi_Ru|^p)|x|^b|\psi_Ru|^{p}\,dx.
\end{eqnarray*}
Furthermore,
\begin{eqnarray*}
(c)
&:=&\int_{\{\frac R2<|x|<R\}}\int_{\R^N}\Big(\nabla f_R(x)(x-y)\frac{I_\alpha(x-y)}{|x-y|^2}|y|^b|u(y)|^p|x|^b|u(x)|^{p}\Big)\,dx\,dy\\
&=&\int_{\{\frac R2<|x|<R,|y-x|>\frac R4\}}\Big(\nabla f_R(x)(x-y)\frac{I_\alpha(x-y)}{|x-y|^2}|y|^b|u(y)|^p|x|^b|u(x)|^{p}\Big)\,dx\,dy\\
&+&\int_{\{\frac R2<|x|<R,|y-x|<\frac R4\}}\Big(\nabla f_R(x)(x-y)\frac{I_\alpha(x-y)}{|x-y|^2}|y|^b|u(y)|^p|x|^b|u(x)|^{p}\Big)\,dx\,dy\\
&=&\mathcal O\Big(\int_{\{|x|>\frac R2\}}(I_\alpha*|\cdot|^b|u|^p)|x|^b|u|^{p}\,dx\Big).
\end{eqnarray*}
Moreover, since for large $R>0$ on $\{|x|>R,|y|<\frac R2\}$, $|x-y|\simeq |x|>R>>\frac R2>|y|$, one has
\begin{eqnarray*}
(a)
&=&\int_{\{|x|>R,|y|<\frac R2\}}\Big(y(y-x)\frac{I_\alpha(x-y)}{|x-y|^2}|y|^b|u(y)|^p|x|^b|u(x)|^{p}\Big)\,dx\,dy\\
&\lesssim&\int_{\{|x|>R,|y|<\frac R2\}}\Big(|y||x-y|\frac{I_\alpha(x-y)}{|x-y|^2}|y|^b|u(y)|^p|x|^b|u(x)|^{p}\Big)\,dx\,dy\\
&\lesssim&\int_{\{|x|>R,|y|<\frac R2\}}\Big({I_\alpha(x-y)}|y|^b|u(y)|^p|x|^b|u(x)|^{p}\Big)\,dx\,dy\\
&\lesssim&\int_{\R^N}\int_{\R^N}\Big({I_\alpha(x-y)}\chi_{|x|>R}|y|^b|u(y)|^p|x|^b|u(x)|^{p}\Big)\,dx\,dy\\
&\lesssim&\int_{\R^N}\int_{\R^N}\Big({I_\alpha(x-y)}\chi_{|x|>R}|y|^b|u(y)|^p|x|^b|u(x)|^{p}\Big)\,dx\,dy.
\end{eqnarray*}
Taking account of Hardy-Littlewood-Sobolev inequality, H\"older and Strauss estimates and Sobolev injections via the fact that $p_*<p<p^*$, write
\begin{eqnarray*}
(a)
&\lesssim&\int_{\R^N}\int_{\R^N}\Big({I_\alpha(x-y)}\chi_{|x|>R}|y|^b|u(y)|^p|x|^b|u(x)|^{p}\Big)\,dx\,dy\\
&\lesssim&\||u\|_{L^{\frac{2Np}{N+\alpha+2b}}(|x|>R)}^p\|u\|_{L^{\frac{2Np}{N+\alpha+2b}}}^p\\
&\lesssim&\Big(\int_{\{|x|>R\}}|u|^{\frac{2Np}{N+\alpha+2b}}\,dx\Big)^{\frac{N+\alpha+2b}{2N}}\\
&\lesssim&\Big(\int_{\{|x|>R\}}|u|^2(|x|^{-\frac{N-1}2}\|u\|^\frac12\|\nabla u\|^\frac12)^{-2+\frac{2Np}{N+\alpha+2b}}\,dx\Big)^{\frac{N+\alpha+2b}{2N}}\\
&\lesssim&\|u\|^{\frac{N+\alpha+2b}{N}}\frac1{R^{\frac{B(N-1)}{2N}}}(\|u\|\|\nabla u\|)^{\frac B{2N}}\\
&\lesssim&R^{-\frac{B(N-1)}{2N}}.
\end{eqnarray*}
Take the quantity
\begin{eqnarray*}
(II)
&:=&\int_{\R^N}\nabla f_R(I_\alpha*|\cdot|^b|u|^p)x|x|^{b-2}|u|^{p}\,dx\\
&=&\int_{|x|<\frac R2}(I_\alpha*|\cdot|^b|u|^p)|x|^{b}|u|^{p}\,dx+\int_{\frac R2<|x|<R}\nabla f_R(I_\alpha*|\cdot|^b|u|^p)x|x|^{b-2}|u|^{p}\,dx.
\end{eqnarray*}
Now, using the properties of $f_R$, one has
\begin{eqnarray*}
\int_{\frac R2<|x|<R}\nabla f_R(I_\alpha*|\cdot|^b|u|^p)x|x|^{b-2}|u|^{p}\,dx
&\lesssim&\int_{\frac R2<|x|<R}\frac{|\nabla f_R(x)|}{|x|}(I_\alpha*|\cdot|^b|u|^p)|x|^{b}|u|^{p}\,dx\\
&\lesssim&\int_{\frac R2<|x|<R}(I_\alpha*|\cdot|^b|u|^p)|x|^{b}|u|^{p}\,dx\\
&\lesssim&R^{-\frac{2B(N-1)}N}.
\end{eqnarray*}
So,
\begin{eqnarray*}
(II)
&=&\int_{|x|<\frac R2}(I_\alpha*|\cdot|^b|u|^p)|x|^{b}|u|^{p}\,dx+\mathcal O(R^{-2}).
\end{eqnarray*}
Then,
\begin{eqnarray*}
M'
&\leq&2\Big((1-\frac2p)N\int_{\{|x|<\frac R2\}}(I_\alpha*|\cdot|^b|u|^p)|u|^p\,dx-4\int_{\{|x|<\frac R2\}}|\Delta u|^2\,dx\Big)\\
&-&\frac4{p}\Big((\alpha-N)(I)+b(II)\Big)\\
&\leq&2\Big((1-\frac2p)N\int_{\{|x|<\frac R2\}}(I_\alpha*|\cdot|^b|\psi_Ru|^p)|\psi_Ru|^p\,dx-4\int_{\{|x|<\frac R2\}}|\Delta u|^2\,dx\Big)\\
&-&\frac4{p}(b-\frac{N-\alpha}2)\int_{\R^N}(I_\alpha*|\cdot|^b|\psi_Ru|^p)|x|^b|\psi_Ru|^{p}\,dx+\mathcal O(R^{-2})\\
&\leq&\frac{4B}p\int_{\{|x|<\frac R2\}}(I_\alpha*|\cdot|^b|\psi_Ru|^p)|x|^b|\psi_Ru|^p\,dx-8\int_{\{|x|<\frac R2\}}|\Delta u|^2\,dx+\mathcal O(R^{-2}).
\end{eqnarray*}
So, with Lemma \ref{crcv}, via \eqref{vd}, one gets
\begin{eqnarray*}
\sup_{[0,T]}|M|
&\geq&8\int_0^T\Big(\int_{\{|x|<\frac R2\}}|\Delta(\psi_R u)|^2\,dx-\frac{B}{2p}\int_{\{|x|<\frac R2\}}(I_\alpha*|\psi_Ru|^p)|\psi_Ru|^p\,dx\Big)\,dt+\mathcal O(R^{-2})T\\
&\geq&8\delta'\int_0^T\|\psi_Ru(t)\|_{\frac{2Np}{N+\alpha+2b}}^2\,dt+\mathcal O(R^{-2})T\\
&\geq&8\delta'\int_0^T\|u(t)\|_{\frac{2Np}{N+\alpha+2b}(|x|<\frac R2)}^2\,dt+\mathcal O(R^{-2})T.
\end{eqnarray*}
Thus, with previous computation
\begin{eqnarray*}
\int_0^T\|u(t)\|_{\frac{2Np}{N+\alpha+2b}}^2\,dt
&\leq& C\Big(\sup_{[0,T]}|M|+T(R^{-2}+R^{-\frac{B(N-1)}{2N}})\Big)\\
&\leq& C\Big(R+TR^{-2}\Big).
\end{eqnarray*}
Taking $R=T^\frac13>>1$, one gets the requested estimate
$$\int_0^T\|u(t)\|_{\frac{2Np}{N+\alpha+2b}}^2\,dt\leq CT^{\frac13}.$$
For $0<T<<1$, the proof follows with Sobolev injections.
\end{proof}
As a consequence, one has the following energy evacuation.
\begin{lem}\label{evac1}
Take $(N,\alpha,b)$ satisfying $(\mathcal C)$. Let $p_*<p<p^*$ such that $p\geq2$ and $u_0\in H^2_{rd}$ satisfying 
$$\max\{\mathcal M\mathcal E'(u_0),{\mathcal M\mathcal G}'(u_0)\}<1.$$
Then, there exists a sequence of real numbers $t_n\to\infty$ such that the global solution to \eqref{S} satisfies
$$\lim_n\int_{\{|x|<R\}}|u(t_n,x)|^2\,dx=0,\quad\mbox{for all}\quad R>0.$$
\end{lem}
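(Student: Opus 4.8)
The plan is to follow the proof of the energy evacuation Lemma~\ref{evac} for the local problem, replacing the weighted quantity $\int_{\R^N}|x|^{2b}|u|^{2q}\,dx$ there by the Lebesgue norm $\|u(t)\|_{\frac{2Np}{N+\alpha+2b}}$ controlled by the Choquard Morawetz bound of Lemma~\ref{bnd1'}. The whole point is that this norm has vanishing time average, from which a vanishing sequence is extracted and then transferred to the $L^2$ mass on balls by H\"older's inequality.

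First I would extract the sequence. By Lemma~\ref{bnd1'}, for every $T>0$ one has $\int_0^T\|u(t)\|_{\frac{2Np}{N+\alpha+2b}}^2\,dt\leq CT^{\frac13}$, so the time average satisfies
$$\frac1T\int_0^T\|u(t)\|_{\frac{2Np}{N+\alpha+2b}}^2\,dt\leq CT^{-\frac23}\longrightarrow0.$$
Were $\|u(t)\|_{\frac{2Np}{N+\alpha+2b}}^2$ bounded below by some $\varepsilon>0$ for all large $t$, the integral would grow at least linearly in $T$, contradicting the $T^{\frac13}$ bound; hence there is a sequence $t_n\to\infty$ along which $\|u(t_n)\|_{\frac{2Np}{N+\alpha+2b}}\to0$.

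Next I would localize. Setting $s:=\frac{2Np}{N+\alpha+2b}$, the hypothesis $p>p_*=1+\frac{\alpha+4+2b}N$ forces $s>2$, so H\"older's inequality on the ball $\{|x|<R\}$ with conjugate exponents $\frac s2$ and $\frac s{s-2}$ gives, for each fixed $R>0$,
$$\int_{\{|x|<R\}}|u(t_n,x)|^2\,dx\leq C R^{N(1-\frac2s)}\,\|u(t_n)\|_{s}^2.$$
Since $\|u(t_n)\|_s\to0$ independently of $R$, letting $n\to\infty$ with $R$ fixed gives the conclusion simultaneously for every $R>0$ along the one sequence $t_n$.

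All the substance sits in the Morawetz input Lemma~\ref{bnd1'}; granting that, the argument is elementary and I expect no real obstacle. The only item to verify is the strict inequality $s>2$, which holds because $p_*=1+\frac{\alpha+4+2b}N>1+\frac{\alpha+2b}N$, and which is exactly what makes the volume factor $R^{N(1-2/s)}$ a positive finite power so that H\"older applies on each ball.
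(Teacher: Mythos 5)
Your proof is correct and takes essentially the same route as the paper: the Morawetz bound of Lemma \ref{bnd1'} yields a sequence $t_n\to\infty$ along which $\|u(t_n)\|_{\frac{2Np}{N+\alpha+2b}}\to0$, and H\"older's inequality on $\{|x|<R\}$ (your exponent $R^{N(1-\frac2s)}$ is exactly the paper's $R^{\frac{2B}p}$, since $N(1-\frac2s)=\frac{2B}p$) converts this into vanishing of the local mass. If anything, you make explicit the pigeonhole extraction of the sequence from the sublinear time average, which the paper leaves implicit.
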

\begin{proof}
Take $t_n\to\infty$. By H\"older estimate
$$\int_{\{|x|<R\}}|u(t_n,x)|^2\,dx\leq R^{\frac{2B}p}\|u(t_n)\|_{\frac{2Np}{N+\alpha+2b}}^2\to0.$$
Indeed, by the previous Lemma 
$$\|u(t_n)\|_{\frac{2Np}{N+\alpha+2b}}\to0.$$
\end{proof}
\subsection{Scattering Criterion}
This subsection is devoted to prove the next result. 
\begin{prop}\label{crt}
Take $(N,\alpha,b)$ satisfying $(\mathcal C)$. Let $\max\{p_*,x_\alpha\}<p<p^*$ such that $p\geq\max\{2,\frac32+\frac\alpha N\}$. Let $u\in C(\R,H^2_{rd})$ be a global radial solution to \eqref{S}. Assume that 
$$0<\sup_{t\geq0}\|u(t)\|_{H^2}:=E<\infty.$$
There exist $R,\epsilon>0$ depending on $E,N,p,b,\alpha$ such that if
$$\liminf_{t\to+\infty}\int_{|x|<R}|u(t,x)|^2\,dx<\epsilon,$$
then, $u$ scatters for positive time. 
\end{prop}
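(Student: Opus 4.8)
The plan is to transpose, essentially verbatim, the scattering criterion proved for the Schr\"odinger equation \eqref{S2} in Section 3, replacing the local power by the nonlocal source $\mathcal N:=-(I_\alpha*|\cdot|^b|u|^p)|x|^b|u|^{p-2}u$, the critical index $s_c$ by $s_c'$, and the degree $2q-1$ by $2p-1$. Since $u$ is bounded in $H^2$ by hypothesis and by Lemma \ref{bnd}, I first record the nonlinear Strichartz estimate analogous to Lemma \ref{tch0}: there exist $\theta\in(0,2p-1)$ and an $s_c'$-admissible pair $(a,r)$ with
$$\|u-e^{i\cdot\Delta^2}u_0\|_{S^{s_c'}(I)}\lesssim\|u\|_{L^\infty(I,H^2)}^\theta\,\|u\|_{L^a(I,L^r)}^{2p-1-\theta}.$$
This follows by applying Corollary \ref{lhs2} to absorb the Riesz convolution, then H\"older's inequality to peel off the two copies of the weight $|x|^b$, and finally Sobolev embeddings; the admissibility window $p_*<p<p^*$ is what forces $|x|^b$ to lie in the relevant Lebesgue space both near the origin and near infinity, exactly as the sign conditions on $\frac{2N}\mu+2b$ were used on $\{|x|<1\}$ and $\{|x|>1\}$ in Lemma \ref{tch0}.

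The heart of the proof is the analog of Proposition \ref{fn}: for every $\varepsilon>0$ there are $T,\mu>0$ so that $\|e^{i(\cdot-T)\Delta^2}u(T)\|_{L^a((T,\infty),L^r)}\lesssim\varepsilon^\mu$. I would fix a small $\beta>0$, take $T>\varepsilon^{-\beta}$, and split the Duhamel integral at $s=T-\varepsilon^{-\beta}$ to write $e^{i(\cdot-T)\Delta^2}u(T)=e^{i\cdot\Delta^2}u_0+F_1+F_2$. The free term is controlled by Strichartz estimates and Sobolev embedding. For the recent-time contribution $F_2$ I combine the hypothesis $\int\psi_R|u(T)|^2\,dx<\epsilon^2$ with the mass-flux bound $|\frac{d}{dt}\int\psi_R|u|^2\,dx|\lesssim R^{-1}$ to propagate the smallness of $\|\psi_R u(t)\|$ backwards across the short interval; interpolating between $L^2$ and $L^{2N/(N-4)}$ and using the radial Strauss decay of $(1-\psi_R)u$ then gives $\|u\|_{L^\infty((T,\infty),L^r)}\lesssim\varepsilon^\lambda$, which fed into the nonlinear estimate produces a positive power of $\varepsilon$.

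The far-past term $F_1$ is where the biharmonic dispersive decay $\|e^{it\Delta^2}\cdot\|_r\lesssim t^{-\frac N2(\frac12-\frac1r)}\|\cdot\|_{r'}$ enters. Interpolating $F_1$ between $L^\infty$ and a Strichartz space reduces matters to a time integral of $\|\mathcal N(s)\|_1$, so the decisive pointwise-in-time bound is
$$\big\||x|^b(I_\alpha*|\cdot|^b|u|^p)|u|^{p-1}\big\|_1\lesssim\|u\|_{H^2}^{2p-1},$$
which I would obtain from Corollary \ref{lhs2} and the inhomogeneous Gagliardo--Nirenberg inequality of Proposition \ref{gag2}. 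Here the hypothesis $p\geq\frac32+\frac\alpha N$ is the exact analog of $q\geq\frac32$ in the local case: it guarantees enough powers of $u$ to swallow the two weights and the convolution into $H^2$ norms. The remaining time integral converges provided $\frac N4>1+\frac1a$, which as in the local case is encoded by $x_\alpha<p$ together with the dimensional restriction in $(\mathcal C)$ and the positivity $P_\alpha(p^*)=2\big(\frac{4+2b+\alpha}{N-4}\big)^2>0$; this yields $\|F_1\|_{L^a((T,\infty),L^r)}\lesssim\varepsilon^{c}$ for some $c>0$, and collecting the three pieces proves the key estimate.

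To conclude, Duhamel's formula gives $\|e^{i\cdot\Delta^2}u(T)\|_{L^a((0,\infty),L^r)}=\|e^{i(\cdot-T)\Delta^2}u(T)\|_{L^a((0,\infty),L^r)}\lesssim\epsilon^\mu$, and feeding this into the nonlinear estimate together with the absorption Lemma \ref{abs} upgrades it to the global bound $\|u\|_{L^a((0,\infty),L^r)}\lesssim\epsilon^\mu$. Setting $u_+:=e^{-iT\Delta^2}u(T)+i\int_T^\infty e^{-is\Delta^2}\mathcal N\,ds$ and applying the nonlinear estimate one last time shows $\|u(t)-e^{it\Delta^2}u_+\|_{H^2}\to0$, which is the asserted scattering. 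I expect the principal obstacle to be the exponent bookkeeping: because $\mathcal N$ carries one weight $|x|^b$ inside the Riesz convolution and another outside, every invocation of Hardy--Littlewood--Sobolev (Corollary \ref{lhs2}) and of Gagliardo--Nirenberg (Proposition \ref{gag2}) must be verified against the full window $(\mathcal C)$, $\max\{p_*,x_\alpha\}<p<p^*$, $p\geq\max\{2,\frac32+\frac\alpha N\}$, so that each weight lands in an admissible Lebesgue space and each time integral converges; this is more delicate than in the local case and is the only place where the nonlocality genuinely complicates the scheme.
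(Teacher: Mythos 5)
Your proposal follows essentially the same route as the paper: the nonlinear Strichartz estimate (the paper's Lemma \ref{tch}, proved via Corollary \ref{lhs2}, H\"older, and Sobolev with the sign conditions on $\frac{2N}{\mu}+2b$ split between $\{|x|<1\}$ and $\{|x|>1\}$), the key decay estimate (Proposition \ref{fn1}) obtained by splitting Duhamel at $T-\varepsilon^{-\beta}$ and treating the linear term, $F_2$ (mass propagation plus radial Strauss decay), and $F_1$ (biharmonic dispersive decay, the $L^1$ bound using $p\geq\frac32+\frac\alpha N$, and convergence of the time integral via $p>x_\alpha$), followed by the absorption Lemma \ref{abs} and the construction of $u_+$. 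Your exponent bookkeeping matches where the paper uses each hypothesis, so this is a faithful reconstruction of the paper's argument.
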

\begin{proof}
By Lemma \ref{bnd}, $u$ is bounded in $H^2$. Take $\epsilon>0$ near to zero and $R(\epsilon)>>1$ to be fixed later. Let us give a technical result.
\begin{lem}\label{tch}
Let $(N,b,\alpha)$ satisfying $(\mathcal C)$ and $p_*<p<p^*$ satisfying $p\geq2$. Then, there exists $\theta\in(0,2p-1)$ such that the global solution to \eqref{S} satisfies
$$\|u-e^{i.\Delta}u_0\|_{S^{s_c}(I)}\lesssim \|u\|_{L^\infty(I,H^{2})}^\theta\|u\|^{2p-1-\theta}_{L^a(I,L^r)},$$
for certain $(a,r)\in\Gamma_{s'_c}$.
\end{lem}
\begin{proof}
Take the real numbers
\begin{gather*}
a:=\frac{2(2p-\theta)}{2-s_c'},\quad d:=\frac{2(2p-\theta)}{2+(2p-1-\theta)s_c'};\\
r:=\frac{2N(2p-\theta)}{(N-2s_c')(2p-\theta)-4(2-s_c')}.
\end{gather*}
The condition $\theta=0^+$ gives via previous computation
$$ (a,r)\in \Gamma_{s_c'},\quad (d,r)\in \Gamma_{-s_c'}\quad\mbox{and}\quad (2p-1-\theta)d'=a.$$
Take two real numbers $r_1,\mu$ satisfying
$$1+\frac\alpha N=\frac2\mu+\frac{\theta}{r_1}+\frac{2p-\theta}r.$$
This gives
\begin{eqnarray*}
\frac{2N}\mu
&=&\alpha+N-\frac{N\theta}{r_1}-\frac{N(2p-\theta)}r\\
&=&\alpha+N-\frac{N\theta}{r_1}-\frac{(N-2s_c')(2p-\theta)-4(2-s_c')}2\\
&=&-2b-\frac{N\theta}{r_1}+\theta\frac{4+2b+\alpha}{2(p-1)}.
\end{eqnarray*}
Using  Hardy-Littlewood-Sobolev and H\"older estimates, one has
\begin{eqnarray*}
\|\mathcal N\|_{L^{b'}(I,L^{r'}(|x|<1))}
&\lesssim&\||x|^b\|_{L^\mu(|x|<1)}^2\|u\|_{L^\infty(I,L^{r_1})}^\theta\|\|u\|_r^{2p-1-\theta}\|_{L^{d'}(I)}.
\end{eqnarray*}
 Then, choosing $r_1:=\frac{2N}{N-4}$, one gets because $p<p^*$,
$$\frac{2N}\mu+2b=\frac\theta{2(p-1)}(4+2b+\alpha-(p-1)(N-4))>0.$$
So, $|x|^b\in L^\mu(|x|<1)$.
 Then, by Sobolev injections, one gets
\begin{eqnarray*}
\|\mathcal N\|_{L^{b'}(I,L^{r'}(|x|<1))}
&\lesssim&\||x|^b\|_{L^\mu(|x|<1)}^2\|u\|_{L^\infty(I,L^{r_1})}^\theta\|\|u\|_r^{2p-1-\theta}\|_{L^{d'}(I)}\\
&\lesssim&\|u\|_{L^\infty(I,\dot H^{2})}^\theta\|u\|_{L^a(I,L^r)}^{2p-1-\theta}.
\end{eqnarray*}
 Moreover, on the complementary of the unit ball, 
one takes $r_1:=2$, so because $p>p_*$, one has
$$\frac{2N}\mu+2b=\frac\theta{2(p-1)}(4+2b+\alpha-N(p-1))<0.$$
So, $|x|^b\in L^\mu(|x|>1)$.
The proof is achieved via Strichartz estimates.
\end{proof}
The following result is the key to prove the scattering criterion.
\begin{prop}\label{fn1}
Take $(N,\alpha,b)$ satisfying $(\mathcal C)$. Let $\max\{p_*,x_\alpha\}<p<p^*$ such that $p\geq\max\{2,\frac32+\frac\alpha N\}$. Let $u_0\in H^2_{rd}$ satisfying 
$$\max\{\mathcal M\mathcal E'(u_0),{\mathcal M\mathcal G}'(u_0)\}<1.$$
Then, for any $\varepsilon>0$, there exist $T,\mu>0$ such that the global solution to \eqref{S} satisfies
$$\|e^{i(\cdot-T)\Delta^2}u(T)\|_{L^a((T,\infty),L^r)}\lesssim \varepsilon^\mu.$$
\end{prop}
\begin{proof}
One keeps notations of the proof of Lemma \ref{tch}. Let $0<\beta<<1$ and $T>\varepsilon^{-\beta}>0$. By the integral formula
\begin{eqnarray*}
e^{i(\cdot-T)\Delta^2}u(T)
&=&e^{i\cdot\Delta^2}u_0+i\int_0^Te^{i(\cdot-s)\Delta^2}[(I_\alpha*|\cdot|^b|u|^p)|x|^b|u|^{p-2}u]\,ds\\
&=&e^{i\cdot\Delta^2}u_0+i\Big(\int_0^{T-\varepsilon^{-\beta}}+\int_{T-\varepsilon^{-\beta}}^T\Big)e^{i(\cdot-s)\Delta^2}[(I_\alpha*|\cdot|^b|u|^p)|x|^b|u|^{p-2}u]\,ds\\
&:=&e^{i\cdot\Delta^2}u_0+F_1+F_2.
\end{eqnarray*}
$\bullet$ The linear term. Since $(a,\frac{Nr}{N+rs_c'})\in\Gamma$, by Strichartz estimate and Sobolev injections, one has
\begin{eqnarray*}
\|e^{i\cdot\Delta^2}u_0\|_{L^a((T,\infty),L^r)}
&\lesssim&\||\nabla|^{s_c'}e^{i\cdot\Delta^2}u_0\|_{L^a((T,\infty),L^{\frac{Nr}{N+rs_c'}})}\lesssim\|u_0\|_{H^2}.
\end{eqnarray*}
$\bullet$ The term $F_2$. By Lemma \ref{tch}, one has 
\begin{eqnarray*}
\|F_2\|_{L^a((T,\infty),L^r)}
&\lesssim&\|u\|_{L^\infty((T,\infty),H^{2})}^\theta\|u\|^{2p-1-\theta}_{L^a((T,\infty),L^r)}\\
&\lesssim&\|u\|^{2p-1-\theta}_{L^a((T,\infty),L^r)}.
\end{eqnarray*}
Following lines in the proof of Proposition \ref{fn}, there exists $\lambda,\lambda'\in(0,1)$ such that ,  for $R>\varepsilon^{-\frac{2\lambda}{(N-1)\lambda'}}$,
\begin{eqnarray*}
\|F_2\|_{L^a((T,\infty),L^r)}
&\lesssim&\varepsilon^{(2p-1-\theta)(\lambda-\frac{\beta}a)}.
\end{eqnarray*}
$\bullet$ The term $F_1$. Take $\frac1r={\lambda''}(\frac1r+\frac{s_c'}N)$ and $\lambda''\in(0,1)$. By interpolation
\begin{eqnarray*}
\|F_1\|_{L^a((T,\infty),L^r)}
&\lesssim&\|F_1\|_{L^a((T,\infty),L^{\frac{rN}{N-s_c'}})}^{\lambda''}\|F_1\|_{L^a((T,\infty),L^\infty)}^{1-\lambda''}\\
&\lesssim&\|e^{i(\cdot-(T-\varepsilon^{-\beta}))\Delta^2}u(T-\varepsilon^{-\beta})-e^{i\cdot\Delta^2}u_0\|_{L^a((T,\infty),L^\frac{rN}{N+rs_c'})}^{\lambda''}\|F_1\|_{L^a((T,\infty),L^\infty)}^{1-\lambda''}\\
&\lesssim&\|F_1\|_{L^a((T,\infty),L^\infty)}^{1-\lambda''}.
\end{eqnarray*}
With the free Schr\"odinger operator decay,
 for $T\leq t$ and $p\geq\frac32+\frac\alpha N$, one gets via Proposition \ref{gag2},
\begin{eqnarray*}
\|F_1\|_{\infty}
&\lesssim&\int_0^{T-\varepsilon^{-\beta}}\frac1{(t-s)^{\frac N4}}\|(I_\alpha*|\cdot|^b|u|^p)|x|^b|u|^{p-1}\|_1\,ds\\
&\lesssim&\int_0^{T-\varepsilon^{-\beta}}\frac1{(t-s)^{\frac N4}}\|u(s)\|_{H^2}^{2p-1}\,ds\\
&\lesssim&(t-T+\varepsilon^{-\beta})^{1-\frac N4}.
\end{eqnarray*}
Thus, if $\frac N4>1+\frac1a$, it follows that
\begin{eqnarray*}
\|F_1\|_{L^a((T,\infty),L^r)}
&\lesssim&\|F_1\|_{L^a((T,\infty),L^\infty)}^{1-\lambda''}\\
&\lesssim&\Big(\int_T^\infty(t-T+\varepsilon^{-\beta})^{a[1-\frac N4]}\,dt\Big)^{\frac{1-\lambda''}a}\\
&\lesssim&\varepsilon^{(1-\lambda'')\beta[\frac N4-1-\frac1a]}.
\end{eqnarray*}
The condition $\frac N4>1+\frac1a$ is equivalent to $N\geq5$ and $(p-1)(2p-1)>\frac{4+2b+\alpha}{N-4}.$ Taking the polynomial function 
$$P_\alpha(X):=(X-1)(2X-1)-\frac{4+2b+\alpha}{N-4}:=(X-x_\alpha)(X-y_\alpha), \quad x_\alpha>0,$$
 the previous inequality is equivalent to $p>x_\alpha$. Thus, one concludes the proof by collecting the previous estimates.
\end{proof}
The proof of the scattering criterion follows like the case of an inhomogeneous local source term. 
\end{proof}
\subsection{Scattering}
Theorem \ref{sctr} about the scattering of energy global solutions to the focusing problem \eqref{S} follows with Proposition \ref{crt} via Lemma \ref{evac1}.



\begin{thebibliography}{99}
\bibitem{cb}{\bf C. Banquet and É. J. Villamizar-Roa}, {\em On the management fourth-order Schr\"odinger-Hartree equation}, Evol. Equ. Contr. Theor. 9, no. 3 (2020), 865-889. 
\bibitem{cd}{\bf D. Cao and W. Dai}, {\em Classification of nonnegative solutions to a bi-harmonic equation with Hartree type nonlinearity}, Proc. Roy. Soc. Edinburgh Sec. A : Math. 149, no. 4 (2019), 979-994.
\bibitem{cow}{\bf Y. Cho, T. Ozawa and C. Wang}, {\em Finite time blowup for the fourth-order NLS}, Bull. Korean Math. Soc. 53, no. 2 (2016), 615-640.





\bibitem{vdd}{\bf V. D. Dinh}, {\em Dynamics of radial solutions for the focusing fourth-order nonlinear Schr\"odinger equations}, arXiv:2001.03022v2 [math.AP] 2020.

\bibitem{dm}{\bf B. Dodson and J. Murphy}, {\em A new proof of scattering below the ground state for the 3D radial focusing cubic NLS},
Proc. Amer. Math. Soc. 145, no. 11 (2017), 4859-4867.

\bibitem{guo}{\bf Q. Guo}, {\em Scattering for the focusing $L^2$-supercritical and $H^2$-subcritical bi-harmonic NLS equations}. Comm. Part. Diff. Equ. 41, no. 2 (2016), 185-207.

\bibitem{gp}{\bf C. M. Guzman and A. Pastor}, {\em On the inhomogeneous bi-harmonic nonlinear schr\"odinger equation: local, global and stability results}, Nonl. Anal.: Real World App. 56 (2020), 103-174.

\bibitem{Holmer}{\bf J. Holmer and S. Roudenko}, {\em A sharp condition for scattering of the radial 3D cubic non-linear Schr\"odinger equations}, Commun. Math. Phys. 282 (2008), 435-467.


\bibitem{Karpman}{\bf V. I. Karpman}, {\em Stabilization of soliton instabilities by higher-order dispersion: fourth-order nonlinear Schr\"odinger equation}, Phys. Rev. E. 53, no. 2 (1996), 1336-1339.


\bibitem{Karpman 1}{\bf V. I. Karpman and A. G. Shagalov}, {\em Stability of soliton described by nonlinear Schr\"odinger type equations with higher-order dispersion}, Phys D. 144 (2000), 194-210.
\bibitem{km}{\bf C. Kenig and F. Merle}, {\em Global well-posedness, scattering and blow-up for the energy-critical focusing non-linear wave equation}. Acta Math. 201, no. 2 (2008), 147-212.




\bibitem{el}{\bf E. Lieb}, {\em Analysis, 2nd ed., Graduate Studies in Mathematics}, Vol. 14, American Mathematical Society, Providence, RI, 2001. 


\bibitem{bp}{\bf B. Pausader}, {\em Global well-posedness for energy critical fourth-order Schr\"odinger equations in the radial case}, Dyn. Partial Differ. Equ. 4, no. 3 (2007), 197-225.
\bibitem{st}{\bf T. Saanouni}, {\em A note on the fractional Schr\"odinger equation of Schr\"odinger type}, J. Math. Anal. Appl. 470 (2019), 1004-1029.
\bibitem{st0}{\bf T. Saanouni}, {\em Non-linear bi-harmonic Choquard equations}, Comm. Pur. Appl. Anal. 19, no. 11 (2020), 5033-5057. 
\bibitem{st2}{\bf T. Saanouni}, {\em A note on the inhomogeneous fourth-order Schr\"odinger equation}, submitted.
\bibitem{tao}{\bf T. Tao}, {\em On the asymptotic behavior of large radial data for a focusing non-linear Schr\"odinger equation}, Dyn. Partial
Differ. Equ. 1, no. 1 (2004), 1-48. 




\end{thebibliography}
\end{document}